\numberwithin{equation}{section}
\title{Computing minimal interpolants in $C^{1,1}(\mathbb{R}^d)$}
\author[A. Herbert-Voss, M. Hirn and F. McCollum]{Ariel
  Herbert-Voss, Matthew J. Hirn\thanks{Corresponding author} and
  Frederick McCollum}
\address[ariel\_herbertvoss@g.harvard.edu]{{\sc Ariel Herbert-Voss}:
  Harvard University, School of Engineering and Applied Sciences, 29
  Oxford Street, Cambridge, Massachusetts 02138, USA}
\address[mhirn@msu.edu]{{\sc Matthew J. Hirn}: Michigan State
  University, Department of Computational Mathematics, Science \&
  Engineering and Department of Mathematics, 619 Red Cedar Road, East
  Lansing, Michigan 48824, USA}
\address[frederick.mccollum@nyu.edu]{{\sc Frederick McCollum}: New York
  University, Courant Institute of Mathematical Sciences, 251 Mercer
  Street, New York, New York, 10012, USA}
\keywords{Algorithm; Interpolation; Whitney Extension; Minimal
  Lipschitz Extension}
\newcommand{\cell}{\mathrm{cell}}
\newcommand{\diam}{\mathrm{diam}}
\newcommand{\distset}{\mathrm{dist}}
\newcommand{\DT}{\mathrm{DT}}
\newcommand{\face}{\mathrm{face}}
\newcommand{\K}{\mathcal{K}}
\newcommand{\Lip}{\mathrm{Lip}}
\newcommand{\pow}{\mathrm{pow}}
\newcommand{\PD}{\mathrm{PD}}
\newcommand{\PP}{\mathcal{P}}
\newcommand{\R}{\mathbb{R}}
\newcommand{\T}{\mathcal{T}}
\newcommand{\W}{\mathcal{W}}
\newcommand{\Z}{\mathbb{Z}}
\theoremstyle{theorem}
\newtheorem{theorem}{Theorem}[section]
\newtheorem{proposition}[theorem]{Proposition}
\newtheorem{lemma}[theorem]{Lemma}
\newtheorem{corollary}[theorem]{Corollary}
\theoremstyle{remark}
\newtheorem{remark}[theorem]{Remark}
\begin{document}

\renewcommand{\thefootnote}{\fnsymbol{footnote}}

\maketitle

\begin{abstract}
We consider the following interpolation problem. Suppose one is given
a finite set $E \subset \R^d$, a function $f: E \rightarrow \R$, and
possibly the gradients of $f$ at the points of $E$. We want to
interpolate the given information with a function $F \in
C^{1,1}(\R^d)$ with the minimum possible value of $\Lip (\nabla
F)$. We present practical, efficient algorithms for constructing an $F$ such that
$\Lip(\nabla F)$ is minimal, or for less computational effort, within
a small dimensionless constant of being minimal.\\
\\
October 24, 2016 \\
\\
\href{http://arxiv.org/abs/1411.5668}{arXiv:1411.5668}
\end{abstract}

\thispagestyle{plain}

\section{Introduction}

We consider the problem of computing interpolants in
$C^{1,1}(\R^d)$, that is the space of functions whose
derivatives are Lipschitz:
\begin{align*}
C^{1,1}(\R^d) &= \{ g: \R^d \rightarrow \R \mid \Lip ( \nabla g) <
\infty \}, \\
\Lip ( \nabla g) &= \sup_{\substack{x, y \in \R^d \\ x
    \neq  y}} \frac{ | \nabla g(x) - \nabla g(y) |}{|x - y|},
\end{align*}
where $| \cdot |$ denotes the Euclidean norm. Analogous to interpolation by
Lipschitz functions, in which one wishes to minimize the Lipschitz
constant of the interpolant, here we aim to minimize the Lipschitz
constant of the gradient of the interpolant. We consider two closely related problems,
the first of which is the following: \\

\noindent\underline{\textsc{Jet interpolation problem:}} One is given
a finite set of points $E \subset \R^d$, and at each point $a \in E$,
a function value $f_a \in \R$ and a gradient $D_af \in \R^d$ are
specified. Compute an interpolating function $F \in C^{1,1}(\R^d)$ such that:
\begin{enumerate}
\item
$F(a) = f_a$ and $\nabla F(a) = D_af$ for each $a \in E$.
\item
Amongst all such interpolants satisfying the previous condition, the
value of $\Lip(\nabla F)$ is minimal. \\
\end{enumerate}

The \textsc{Jet interpolation problem} is a computational version of
Whitney's Extension Theorem \cite{whitney:analyticExtensions1934}. Whitney's
Extension Theorem is a partial converse to Taylor's Theorem. Given a
closed set $E \subset \R^d$ (not necessarily finite) and an
$m^{\text{th}}$ degree polynomial at each point of $E$, Whitney's
Extension Theorem states that if the collection of polynomials
satisfies certain compatibility conditions, then there exists a
function $F \in C^m(\R^d)$ such that for each point $a \in E$ the
$m^{\text{th}}$ order Taylor expansion of $F$ at $a$ agrees with the
polynomial specified at that point. A similar result can be stated for
$C^{m-1,1}(\R^d)$ with $(m-1)^{\text{st}}$ degree polynomials given at each
point of $E$. In the case of $C^{1,1}(\R^d)$, the polynomials are
defined by the specified function and gradient information:
\begin{equation*}
P_a(x) = f_a + D_af \cdot (x-a), \quad a \in E, \enspace x \in \R^d.
\end{equation*}
Letting $\PP$ denote the space of first order polynomials, the map
\begin{align*}
P: \R^d &\rightarrow \PP, \\
a &\mapsto P_a,
\end{align*}
is called a \emph{1-field} (or a Whitney field). For a function $F \in
C^{1,1}(\R^d)$, the first order Taylor expansions of $F$ are elements
of $\PP$. Such expansions are called \emph{jets}, and are defined as:
\begin{equation*}
J_aF(x) = F(a) + \nabla F(a) \cdot (x-a), \quad a,x \in \R^d.
\end{equation*}
Whitney's Extension Theorem for $C^{1,1}(\R^d)$ can then be stated as
follows.

\begin{theorem}[Whitney's Extension Theorem for
  $C^{1,1}(\R^d)$] \label{thm: WET C11}
Let $E \subset \R^d$ be closed and let $P: E \rightarrow \PP$ be a
1-field with domain $E$. If there exists a constant $M < \infty$
such that
\begin{enumerate}
\setcounter{enumi}{-1}
\renewcommand{\theenumi}{$(W_{\arabic{enumi}})$}
\renewcommand{\labelenumi}{\theenumi}
\item\label{W0}
$|P_a(a) - P_b(a)| \leq M |a-b|^2$, for all $a,b \in E$,
\item\label{W1}
$|\frac{\partial P_a}{\partial x_i}(a) - \frac{\partial P_b}{\partial
  x_i}(a)| \leq M |a-b|$, for all $a,b \in E$ and $i = 1,\ldots, d$,
\end{enumerate}
then there exists an extension $F \in C^{1,1}(\R^d)$ such that $J_aF =
P_a$ for all $a \in E$.
\end{theorem}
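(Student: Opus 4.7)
The plan is to follow Whitney's classical partition-of-unity construction, adapted to the $C^{1,1}$ setting. First, I would decompose the open set $\R^d \setminus E$ into countably many essentially disjoint closed dyadic cubes $\{Q_\nu\}$ with $\diam Q_\nu \leq \distset(Q_\nu, E) \leq C_d \diam Q_\nu$. Slightly enlarging these to $Q_\nu^* = (1+\epsilon)Q_\nu$ (with $\epsilon$ chosen so that any point lies in $O_d(1)$ of the $Q_\nu^*$), I would construct a smooth partition of unity $\{\varphi_\nu\}$ subordinate to $\{Q_\nu^*\}$ satisfying the standard bounds $\|\nabla^k \varphi_\nu\|_\infty \leq C_{d,k}(\diam Q_\nu)^{-k}$ for $k = 1, 2$. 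For each $\nu$ select an anchor $a_\nu \in E$ with $|a_\nu - Q_\nu| = \distset(Q_\nu, E)$, and define
\begin{equation*}
F(x) = \begin{cases} f_a, & x = a \in E, \\ \sum_\nu \varphi_\nu(x)\, P_{a_\nu}(x), & x \in \R^d \setminus E. \end{cases}
\end{equation*}
Since each $P_{a_\nu}$ is affine and the sum is locally finite, $F$ is $C^\infty$ on $\R^d\setminus E$.

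The next step is to verify the jet condition at each $a \in E$. Using $\sum_\nu \varphi_\nu \equiv 1$ on $\R^d \setminus E$, I would write
\begin{equation*}
F(x) - P_a(x) = \sum_\nu \varphi_\nu(x)\bigl(P_{a_\nu}(x) - P_a(x)\bigr)
\end{equation*}
and apply hypotheses \ref{W0} and \ref{W1}, along with the geometric fact that $|a_\nu - a| \leq C_d |x - a|$ whenever $x \in Q_\nu^*$ and $a \in E$, to obtain $|F(x) - P_a(x)| \leq C_d' M |x-a|^2$. Differentiating term by term and exploiting $\sum_\nu \nabla \varphi_\nu \equiv 0$ likewise gives $|\nabla F(x) - \nabla P_a(x)| \leq C_d' M |x-a|$. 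Passing to the limit $x \to a$ shows that $F$ extends to a $C^1$ function on $\R^d$ with $F(a) = f_a$ and $\nabla F(a) = D_a f$, so $J_a F = P_a$.

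The main obstacle is the $C^{1,1}$ Lipschitz estimate on $\nabla F$. Away from $E$, differentiating the defining sum a second time and using $\sum_\nu \nabla^2 \varphi_\nu \equiv 0$ together with the derivative bounds on $\varphi_\nu$ and the hypotheses on $P$ gives a uniform pointwise estimate $|\nabla^2 F(x)| \leq C_d'' M$; the key cancellation is that the $\diam(Q_\nu)^{-2}$ factor from $\nabla^2 \varphi_\nu$ is absorbed by differences of the form $P_{a_\nu}(x) - P_{a_{\nu'}}(x)$, which under \ref{W0}, \ref{W1} are $O(M \cdot \diam(Q_\nu)^2)$ for neighboring cubes. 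For pairs $x, y$ both in $\R^d \setminus E$ lying in the same "Whitney component" this integrates to the desired Lipschitz bound; for pairs with $y \in E$, the inequality $|\nabla F(x) - \nabla F(y)| = |\nabla F(x) - \nabla P_y(x)| \leq C_d' M |x - y|$ follows from step two. The technically delicate case is pairs $x, y$ straddling $E$ or lying in distant Whitney cubes: here I would bridge via a nearest point $a \in E$ to the segment $\overline{xy}$, bounding $|\nabla F(x) - \nabla F(a)| + |\nabla F(a) - \nabla F(y)|$ by $C_d''' M(|x-a| + |a-y|) \lesssim C_d''' M |x-y|$, which is the routine but careful concluding step of the proof.
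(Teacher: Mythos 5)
The paper does not prove Theorem \ref{thm: WET C11}: it is stated as classical background with a citation to Whitney's 1934 paper, and the machinery the authors actually develop (Wells's explicit piecewise-quadratic construction together with Le Gruyer's $\Gamma^1$ functional) is for the sharper problem of computing a minimal-norm interpolant from \emph{finite} data, not for reproving Whitney's qualitative extension theorem for arbitrary closed $E$. Your sketch is the standard Whitney partition-of-unity construction (dyadic decomposition of $\R^d\setminus E$, bump functions with $\|\nabla^k\varphi_\nu\|_\infty \lesssim (\diam Q_\nu)^{-k}$, anchor points $a_\nu$, $F = \sum_\nu \varphi_\nu P_{a_\nu}$), which is exactly the argument in Whitney's original paper and in standard references, so the comparison is really against the cited source rather than the present paper.

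Your outline is essentially correct, and you correctly identify the two cancellations that make the estimates close: $\sum_\nu\nabla\varphi_\nu\equiv 0$ for the first-derivative estimate $|\nabla F(x)-\nabla P_a(x)|\lesssim M|x-a|$ (without subtracting a fixed $P_{a_{\nu_0}}$ under the $\nabla\varphi_\nu$ sum, one picks up an uncontrolled $|x-a|^2/\diam Q_\nu$ term), and $\sum_\nu\nabla^2\varphi_\nu\equiv 0$ for the uniform Hessian bound $|\nabla^2 F|\lesssim M$ off $E$, with the $O(M\diam(Q_\nu)^2)$ size of $P_{a_\nu}-P_{a_{\nu'}}$ for overlapping cubes supplying the needed factor. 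One small wording issue: for the final Lipschitz bound on $\nabla F$, the case split is cleaner than you state. If the segment $\overline{xy}$ avoids $E$ you integrate the Hessian bound regardless of how distant the Whitney cubes are; if the segment meets $E$ at a point $a$, then $a$ is on the segment so $|x-a|+|a-y|=|x-y|$ exactly and the first-derivative estimate applies at $a$. Appealing to a ``nearest point of $E$ to the segment'' when the segment does not meet $E$ would not give $|x-a|+|a-y|\lesssim|x-y|$, but you do not actually need that case. Also worth noting, since the paper's theorem is phrased for closed (not merely finite) $E$: you should check that the jet estimate gives continuity and differentiability of $F$ \emph{at} points of $E$ including non-isolated ones, which it does via the $O(M|x-a|^2)$ and $O(M|x-a|)$ bounds as $x\to a$ and the hypotheses $(W_0)$, $(W_1)$ applied between nearby points of $E$. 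With those clarifications your argument is a correct proof of the stated theorem.
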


When the set $E$ is finite, the compatibility conditions of Whitney's
Extension Theorem are automatically satisfied. On the other hand, the
theorem cannot be used to derive the minimal value of $\Lip(\nabla
F)$. Denote this value as:
\begin{equation*}
\|P\|_{C^{1,1}(E)} = \inf \{ \Lip(\nabla \widetilde{F}) \mid J_a \widetilde{F} = P_a \text{ for
  all } a \in E\}.
\end{equation*}
Indeed, if one were to take the infimum over all possible $M$
satisfying \ref{W0} and \ref{W1},
the resulting value would only be within a constant $C(d)$ of
$\|P\|_{C^{1,1}(E)}$. A recent paper by Le Gruyer
\cite{legruyer:minLipschitzExt2009} solves this problem in closed
form by defining a functional $\Gamma^1$ such that $\Gamma^1(P;E) =
\|P\|_{C^{1,1}(E)}$. This is the first ingredient in our solution to
the \textsc{Jet interpolation problem}. The second is a result of Wells
\cite{wells:diffFuncLipDeriv1973}, which gives a construction of an
interpolant $F \in C^{1,1}(\R^d)$ with $\Lip(\nabla F) = M$ for a
specified value of $M$ satisfying certain conditions. It is easy to show
that $M = \Gamma^1(P;E)$ satisfies the conditions, and thus one can
combine the two results to obtain a minimal interpolant. The
construction of Wells is not simple, however, and must be adapted to
certain data structures implementable on a computer. Thus our third
ingredient is a collection of algorithms and data structures from
computational geometry that compute and encode the results of Le Gruyer
and Wells. 

The second problem we consider is when only the function values are
specified: \\

\noindent\underline{\textsc{Function interpolation problem}:} One is
given a finite set of points $E \subset \R^d$ and a function $f: E
\rightarrow \R$. Compute an interpolating function $F \in
C^{1,1}(\R^d)$ such that:
\begin{enumerate}
\item
$F(a) = f(a)$ for each $a \in E$.
\item
Amongst all such interpolants satisfying the previous condition, the value
of $\Lip(\nabla F)$ is minimal. \\
\end{enumerate}

The \textsc{Function interpolation problem} is harder than the
\textsc{Jet interpolation problem} since the space of functions
satisfying $F(a) = f(a)$ is larger than the space of functions
satisfying $J_aF = P_a$. However, the minimal value of $\Lip(\nabla
F)$ can be specified using the functional $\Gamma^1$. Indeed, let
$\|f\|_{C^{1,1}(E)}$ denote the minimal value of $\Lip(\nabla F)$,
where
\begin{equation*}
\|f\|_{C^{1,1}(E)} = \inf \{ \Lip(\nabla \widetilde{F}) \mid \widetilde{F}(a) = f(a) \text{
  for all } a \in E\}.
\end{equation*}
For functions $f$, define the $\Gamma^1$ functional as:
\begin{equation*}
\Gamma^1(f;E) = \inf \{ \Gamma^1(P;E) \mid P_a(a) = f(a) \text{ for
  all } a \in E\}.
\end{equation*}
Then, as is shown in \cite{legruyer:minLipschitzExt2009},
$\Gamma^1(f;E) = \|f\|_{C^{1,1}(E)}$. The functional $\Gamma^1$ is
convex, and thus $\Gamma^1(f;E)$ can be computed using convex
programming. Additionally, the minimizing 1-field can be
outputted. Then one can use the remainder of the \textsc{Jet
  interpolation problem} algorithm to solve the \textsc{Function
  interpolation problem}.

The purpose of this paper is twofold, with one aspect being the \emph{theoretical} efficiency of our
algorithms, but the other, equally important aspect, being the
\emph{practicality} of our algorithms. Indeed, the goal is to balance
the two; sometimes this results in trading theoretical efficiency for algorithms that
can be implemented and run a computer, while in other cases
we prove new theoretical results that are of practical interest. 

On the theoretical side, we assume that our computer is able to
work with exact real numbers. We ignore roundoff, overflow, and
underflow errors, and suppose that an exact real number can be stored
at each memory address. Additionally, we suppose that it takes one
machine operation to add, subtract, multiply, or divide two real
numbers $x$ and $y$, or to compare them (i.e., decide whether $x < y$,
$x > y$, or $x = y$).

From a more practical perspective, we do not always implement
algorithms with optimal theoretical worst case guarantees in terms
of complexity, opting to choose an alternative that works better in
practice. Additionally, at various stages of our interpolation algorithm
we may give multiple options for computing the next step. The difference
between the options might depend on $N$ and $d$, or one might be
more stable than another in certain situations. In Section \ref{sec:
  numerical simulations} we give a summary of numerical simulations of
the algorithm running on a computer. 

The \emph{work} of an algorithm is the number of machine operations
needed to carry it out, and the \emph{storage} of an algorithm is the
number of random access memory addresses required.

In order to analyze the complexity of the algorithm, we break it into
three main components: the aforementioned storage, the \emph{one time
  work}, and the \emph{query work}. The one time work consists of the
following: given the set $E$ and either the 1-field $P$ or the function $f$, the algorithm
performs a certain amount of preprocessing, after which it is ready to
accept queries from the user. The number of computations needed for
this preprocessing stage is the one time work. The algorithm
additionally outputs $\Lip(\nabla F)$ at the end of this stage. Once the one
time work is complete, the user inputs a query point $x \in \R^d$, and the
algorithm returns $J_xF$ (i.e. $F(x)$ along with $\nabla F(x)$). The
amount of work for each query is the query work. 

Recently, there has been an interest in algorithmic results
related to Whitney's Extension Theorem. One can recast the
\textsc{Function interpolation problem} in terms of interpolants $F
\in C^m(\R^d)$, with the goal to minimize an appropriate $C^m$
norm. Suppose that $\#(E) = N$. In \cite{fefferman:fittingDataI}, an
algorithm is presented which computes a number $M$ such that $M$ has
the same order magnitude as $\|f\|_{C^m(E)}$, that is $c(m,d) M \leq
\|f\|_{C^m(E)} \leq C(m,d) M$. The algorithm
requires $O(N\log N)$ work and $O(N)$ storage. In a second, 
companion paper \cite{fefferman:fittingDataII}, an additional
algorithm is presented which computes a function 
$F \in C^m(\R^d)$ such that $F$ interpolates
the given function and $c(m,d) \|F\|_{C^m(\R^d)} \leq \|f\|_{C^m(E)}
\leq C(m,d) \|F\|_{C^m(\R^d)}$. The one time work of the algorithm requires
$O(N\log N)$ operations, the query work requires
$O(\log N)$ operations, and the storage never exceeds
$O(N)$. Analogous results describing a new algorithm for fitting a
Sobolev function to data are presented in \cite{fefferman:sobolevDataI}.

In related work \cite{fefferman:interLinProg2011}, the task of
computing an $F$ such that $\|F\|_{C^m(\R^d)}$
is within a factor of $1 + \epsilon$ of $\|f\|_{C^m(E)}$ is considered. A linear
programming problem is devised which solves the problem. The number of
linear constraints grows linearly in $N$ and as
$O(\epsilon^{-\frac{3}{2}d})$ in $\epsilon$.

In terms of the efficiency with regards to $N$, these algorithms are
optimal. However, the dimension dependent constants can grow
exponentially with $d$. Additionally, while the algorithms are
beautiful, they are also intricate. Thus, from a practical
perspective, they are not likely to be implemented on a
computer and used in applications. 

Our algorithm, while not always reaching the optimal theoretical
complexity guarantees of the previously mentioned algorithms, is to
the best of our knowledge the first of its type to be implemented on a
computer and be practical for certain tasks. The key features of our $C^{1,1}$ algorithm are:
\begin{itemize}

\item
We can compute $\|P\|_{C^{1,1}(E)}$ precisely in $O(N^2)$ work, or to
within a dimensionless constant (approximately $20$) of
$\|P\|_{C^{1,1}(E)}$ using $O(N\log N)$ work.

\item
We can compute $\|f\|_{C^{1,1}(E)}$ to within a dimensionless constant
(again approximately $20$) plus an arbitrarily small additive slack
$\epsilon$ using interior point methods from convex optimization. The
number of iterations is sublinear,
$O((N \log N)^{1/2} \log(N/\epsilon))$, and the cost per iteration is $O(N^3)$,
although sparsity considerations may improve this computational time
in practice. 

\item
For the one time work outside of computing $\|P\|_{C^{1,1}(E)}$ or
$\|f\|_{C^{1,1}(E)}$, the efficiency of the algorithm is tied to the
complexity of computing and storing a convex hull. Computing a convex hull
requires $O(N\log N + N^{\lceil d/2 \rceil})$ operations, and its
storage is $O(N^{\lceil d/2 \rceil})$. Subsequently, the work of our
algorithm is $O(N^{d+2})$ and the storage is $O(N^{\lceil d/2 \rceil +
  1})$.

\item
With some additional one time work and under suitable conditions, the
query work requires only $O(\log N)$ operations.

\item
A version of the algorithm that runs from start to finish has been
implemented in $\text{MATLAB}^{\circledR}$ and can be run on a laptop for small problems,
and is practical on a server for larger problems. The
complete code can be downloaded at:
\begin{equation*}
\href{https://github.com/matthew-hirn/C-1-1-Interpolation}{\texttt{https://github.com/matthew-hirn/C-1-1-Interpolation}}
\end{equation*}
The code is easy to use and not difficult to edit. Throughout the
paper we highlight which parts of the 
algorithm have been implemented, and discuss the potential benefits of
the parts that have not been implemented.

\item
The interpolant we compute is a generalized absolutely minimal
Lipschitz extension (AMLE), as defined in \cite{hirn:quasiAMLE2012}.

\end{itemize}

Finally, the choice of the space $C^{1,1}(\R^d)$ is natural in several
ways beyond the existence of the results of Le Gruyer and
Wells. Standard Lipschitz extensions in $C^{0,1}(\R^d)$ and absolutely minimal Lipschitz
extensions have applications in computer
science \cite{kyng:lipLearnGraphs2015}, partial differential equations
\cite{aronsson:sandpiles1996}, and image processing
\cite{caselles:axiomaticImage1998}, among others. The next non-trivial
space to consider beyond $C^{0,1}(\R^d)$ is $C^{1,1}(\R^d)$. In fact,
for $d=1$, the more general space $C^{m-1,1}(\R)$ was originally considered by Favard
\cite{favard:interpolation1940} and later by Glaeser
\cite{glaeser:prolongExtremal1973}. The solution $F$ of the
\textsc{Jet interpolation problem} for this space is a spline $F \in
C^{m-1, 1}(\R)$ made up of $C^m$-smooth pieces with at most $m-1$
knots. In cubic spline interpolation the spline either minimizes
the integral of the curvature $\kappa$ \cite{glass:splines1966},
\begin{equation*}
\int \kappa(x) \, dx = \int \frac{|F^{(2)}(x)|^2}{(1 + F^{(1)}(x)^2)^{5/2}} \, dx,
\end{equation*}
or a simpler energy \cite{holladay:splines1957} such as:
\begin{equation} \label{eqn: spline l2 energy}
\|F^{(2)}\|_{L^2(\R)}^2 = \int |F^{(2)}(x)|^2 \, dx.
\end{equation}
In the setting of $C^{m-1,1}(\R)$, though, the optimal splines minimize
the energy:
\begin{equation*}
\Lip(F^{(m-1)}) = \sup_{x \in \R} |F^{(m)}(x)| = \|F^{(m)}\|_{L^{\infty}(\R)}.
\end{equation*}
In light of \eqref{eqn: spline l2 energy}, we see that in the $C^{m-1,1}(\R)$
setting we have replaced the $L^2$ energy with an $L^{\infty}$
energy. In $d$-dimensions, a similar identity holds for $C^{1,1}(\R^d)$:
\begin{equation*}
\Lip(\nabla F) = \sup_{x \in \R^d} \| \nabla^2 F(x) \|_{\mathrm{op}},
\end{equation*}
where $\| \cdot \|_{\mathrm{op}}$ is the operator norm. The interpolants we compute
are piecewise quadratic, and thus are $d$-dimensional analogues to the
quadratic splines of Glaeser. The ``knots'' in higher dimensions
correspond to $(d-1)$-dimensional facets (for example, line segments in
$\R^2$), along which there is a discontinuity in the second order
partial derivatives of $F$.

In the work of Glaeser as well as many cubic spline interpolating schemes, the
ordering of the real line allows one to reduce the $N$ point
interpolation problem to a two point interpolation problem. When one
transitions to $\R^d$ and asks for $d$-dimensional interpolants,
however, the lack of natural ordering is problematic. This is where
the work of Wells and Le Gruyer come into play, giving us a roadmap to
navigate the higher dimensional Euclidean space.

While we have not applied our algorithm on real data, it would seem
that the algorithm could be useful for various applications. For
example, it could be used to aid in the design of experiments in
applied physics and chemistry. Suppose a scientist wants to conduct a
costly experiment in which he must deposit a thin film of $\text{SiO}_2$ in a
special tool that has plasma in it. The success of this experiment
depends on several factors, such as the pressure in the chamber,
the temperature of the substrate, the voltage of the plasma, and the ratios of
the gases involved. He wants to find the optimal conditions for performing
the experiment. He knows that the voltage is a smooth function of the
other parameters, but it is difficult to measure. Consequently, he can
only measure it for a few different combinations of initial
conditions. He varies each parameter slightly while holding the others
constant to find the rate of change of the voltage with respect to that
parameter. Now he has data points (configurations of the parameters),
function values (measured voltages), and partial derivatives. Using
our interpolation algorithm, it is possible to compute a good estimate
of the voltage for any configuration of the parameters and thereby
determine the optimal conditions for the experiment.

The remainder of this paper is organized as follows. In Section
\ref{sec: background} we give the relevant background information
regarding the results of Wells and Le Gruyer, as well as the pertinent
material from computational geometry. In Section \ref{sec: overview of
  algorithm} we present an overview of the algorithm, while in
Sections \ref{sec: compute gamma1}, \ref{sec: one time work}, and
\ref{sec: query work} we fill in the details. In particular, we
describe efficient algorithms for computing $\Gamma^1$ in Section
\ref{sec: compute gamma1}, the remainder of the one time work is
detailed in Section \ref{sec: one time work}, and in Section \ref{sec:
  query work} we present algorithms for the query work. Section
\ref{sec: numerical simulations} describes numerical simulations of
the algorithm and its resulting performance. Appendix \ref{sec:
  convex optimization} reviews some standard textbook concepts from
convex optimization, which can be found in
\cite{boyd:convexOptimization2004}. 

\section{Background} \label{sec: background}

In this section we review the results of Wells and Le Gruyer, and
go over the relevant material from computational geometry.

\subsection{Wells: Constructing the interpolant} \label{sec: wells}

In \cite{wells:diffFuncLipDeriv1973} Wells describes a construction of
an interpolant $F \in C^{1,1}(\R^d)$ with specified semi-norm
$\Lip(\nabla F) = M$. Our algorithm will be based on this
construction, which we review here.

The inputs are the set $E \subset \R^d$, the 1-field $P: E
\rightarrow \PP$, which consists of the specified function values $\{f_a\}_{a
  \in E} \subset \R$ and gradients $\{D_af\}_{a \in E} \subset
\R^d$, and the value $M$. In order for Wells' construction to hold, the
following condition must be satisfied:
\begin{equation} \label{eqn: Wells condition}
f_b \leq f_a + \frac{1}{2}(D_af + D_bf) \cdot (b-a) +
\frac{M}{4}|b-a|^2 - \frac{1}{4M} |D_af - D_bf|^2, \enspace \forall \, 
a,b \in E.
\end{equation}

For each point $a \in E$, define a shifted point $\tilde{a}$:
\begin{equation*}
\tilde{a} = a - \frac{D_af}{M}, \quad a \in E.
\end{equation*}
Additionally, to each point $a \in E$ Wells associates a type of distance function
$d_a: \R^d \rightarrow \R$ from $\R^d$ to that point:
\begin{equation} \label{eqn: wells da dist}
d_a(x) = f_a - \frac{1}{2M} |D_af|^2 + \frac{M}{4} |x - \tilde{a}|^2,
\quad a \in E, \enspace x \in \R^d.
\end{equation}
For any subset $S \subset E$ define $d_S: \R^d \rightarrow \R$ as 
\begin{equation*}
d_S(x) = \min_{a \in S} d_a(x), \quad x \in \R^d.
\end{equation*}
Using the shifted points and the distance functions, Wells associates
to every subset $S \subset E$ several new sets:
\begin{align*}
\widetilde{S} &= \{ \tilde{a} \mid a \in S \}, \\
S_H &= \text{the smallest affine space containing } \widetilde{S}, \\
\widehat{S} &= \text{the convex hull of } \widetilde{S}, \\
S_E &= \{x \in \R^d \mid d_a(x) = d_b(x), \enspace \forall \, a,b \in
S\}, \\
S_{\ast} &= \{x \in \R^d \mid d_a(x) = d_b(x) \leq d_c(x), \enspace
\forall \, a,b \in S, \enspace c \in E\}, \\
S_C &= S_H \cap S_E.
\end{align*}
Wells also defines a set of special subsets $S \subset E$:
\begin{equation} \label{eqn: wells special sets K}
\K = \{S \subset E \mid \exists \, x \in S_{\ast} \text{ such that }
d_S(x) < d_{E \setminus S}(x) \}.
\end{equation}
Note that when $S \in \K$, $S_E \neq \emptyset$, $\dim S_E + \dim S_H
= d$, and $S_H \perp S_E$; therefore $S_C$ is a single point in
$\R^d$. Using the subsets contained in
$\K$, Wells defines a new collection of sets $\{T_S\}_{S \in \K}$,
\begin{equation} \label{eqn: TS definition}
T_S = \frac{1}{2}(\widehat{S} + S_{\ast}) = \left\{ \frac{1}{2}(y + z)
  \mid y \in \widehat{S}, \enspace z \in S_{\ast} \right\}, \quad S \in \K.
\end{equation}
The collection $\{T_S\}_{S \in \K}$ forms a covering of $\R^d$ in
which the regions of overlap have Lebesgue
measure zero. On each set $T_S$, Wells defines a function $F_S: T_S
\rightarrow \R$, which is a local piece of the final interpolant:
\begin{equation*}
F_S(x) = d_S(S_C) + \frac{M}{2} \distset(x,S_H)^2 - \frac{M}{2} \distset(x,S_E)^2,
\quad x \in T_S, \enspace S \in \K,
\end{equation*}
where for any two sets $U,V \subset \R^d$
\begin{equation*}
\distset(U,V) = \inf_{\substack{x \in U\\y \in V}} |x-y|.
\end{equation*}
The final function $F: \R^d \rightarrow \R$ is defined as:
\begin{equation} \label{eqn: Wells interpolant}
F(x) = F_S(x), \quad \text{if } x \in T_S.
\end{equation}
If $T_{S} \cap T_{S'} \neq \emptyset$, then $F_S$ and
$F_{S'}$ as well as $\nabla F_S$ and $\nabla F_{S'}$ agree on $T_{S}
\cap T_{S'}$, so $F$ is well defined and $F \in
C^{1,1}(\R^d)$. Additionally, the gradient of $F_S$ has a simple
analytic form, given by:
\begin{equation*}
\nabla F_S(x) = \frac{M}{2}(z-y), \quad x=\frac{1}{2}(y+z), \enspace y
\in \widehat{S}, \enspace z \in S_{\ast}.
\end{equation*}
Finally, the function $F$ interpolates the data and has the
prescribed semi-norm:
\begin{theorem}[Wells, {\cite[Section 4,
  Theorem 1]{wells:diffFuncLipDeriv1973}}]\label{thm: wells}
Given a finite set $E \subset \R^d$, a 1-field $P: E \rightarrow
\PP$, and a constant $M$ satisfying \eqref{eqn: Wells condition}, the
function $F: \R^d \rightarrow \R$ defined by \eqref{eqn: Wells
  interpolant} is in $C^{1,1}(\R^d)$ and additionally:
\begin{enumerate}
\item
$J_aF = P_a$ for all $a \in E$,
\item
$\Lip(\nabla F) = M$.
\end{enumerate}
\end{theorem}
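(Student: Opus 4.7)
The plan is to verify the conclusions piece by piece, exploiting the fact that every local component $F_S$ has a closed form whose gradient reduces to $\nabla F_S(x) = (M/2)(z-y)$ for $x = (y+z)/2$, $y \in \widehat{S}$, $z \in S_\ast$. First I would pin down the geometric framework: the orthogonality $S_H \perp S_E$ together with $\dim S_H + \dim S_E = d$ (noted in the excerpt for $S \in \K$) shows that $\widehat{S} \subset S_H$ and $S_\ast \subset S_E$ lie in complementary orthogonal affine flats meeting at the single point $S_C$, so any $x \in \R^d$ admits a unique decomposition $(y+z)/2$ with $y$, $z$ drawn from those flats. I would then establish the covering claim $\bigcup_{S \in \K} T_S = \R^d$ with pairwise overlaps of Lebesgue measure zero, by identifying the sets $S_\ast$ with the cells of the power (weighted Voronoi) diagram determined by the sites $\tilde a$ with weights $f_a - |D_af|^2/(2M)$, and transferring this decomposition to the $T_S$ via Minkowski averaging with $\widehat{S}$.

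With the geometric skeleton in place, I would verify the interpolation condition. The hypothesis \eqref{eqn: Wells condition} forces $d_a(\tilde a) \leq d_b(\tilde a)$ for all $b \in E$, so $\{a\} \in \K$ with $\tilde a \in \{a\}_\ast$; since $\widehat{\{a\}} = \{\tilde a\}$ and $a = (\tilde a + (2a-\tilde a))/2$, the point $a$ lies in $T_{\{a\}}$. Direct substitution into the formulas, using $\{a\}_H = \{\tilde a\}$ and $\{a\}_E = \R^d$, yields $F_{\{a\}}(a) = f_a$ and $\nabla F_{\{a\}}(a) = M(a - \tilde a) = D_af$.

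The main technical effort is showing that the pieces $F_S$ agree, along with their gradients, on the overlaps $T_S \cap T_{S'}$ (yielding $F \in C^1(\R^d)$), and then that the global $\nabla F$ is $M$-Lipschitz. For the patching, I would show that on $T_S \cap T_{S'}$ the gradient admits a common representation $\nabla F(x) = M(\pi_{S_E}(x) - \pi_{S_H}(x))$ up to additive constants that cancel via continuity at a shared boundary point, which is forced by the compatible decomposition $x = (y+z)/2$ on the overlap. For the Lipschitz bound within a single cell, the orthogonality of $S_E$ and $S_H$ gives
\begin{equation*}
| \nabla F_S(x_1) - \nabla F_S(x_2) | = M \sqrt{ | \pi_{S_E}(x_1 - x_2) |^2 + | \pi_{S_H}(x_1 - x_2) |^2 } \leq M | x_1 - x_2 |,
\end{equation*}
and the cross-cell bound follows by concatenating a straight segment from $x_1$ to $x_2$ with the already-established continuity of $\nabla F$. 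The matching sharpness $\Lip(\nabla F) \geq M$ is read off from any pair $a,b \in E$ at which \eqref{eqn: Wells condition} is tight.

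The hard part, I expect, is the combinatorial-geometric analysis behind the covering property and the compatibility of pieces along $T_S \cap T_{S'}$: the sets $T_S$ are not themselves cells of a familiar decomposition but Minkowski averages of a power-diagram cell $S_\ast$ with a convex hull $\widehat{S}$, and proving that they tile $\R^d$ compatibly requires unwinding the definitions of $\K$ and $S_C$ in some detail. Once those geometric properties are in hand, the remaining items---interpolation, the closed-form gradient, the Lipschitz bound and its sharpness---reduce to direct algebraic verification.
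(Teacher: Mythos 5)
The paper does not prove this theorem; it quotes it from Wells' 1973 paper \cite{wells:diffFuncLipDeriv1973} as a black box, so there is no in-paper proof to compare against. Evaluated on its own terms, your sketch has the right architecture (orthogonality of $S_H$ and $S_E$, the power-diagram identification of $\K_\ast$, piecewise verification of the gradient formula, a path-concatenation argument for the global Lipschitz bound), and you correctly flag the tiling/compatibility analysis as the hard part. However, two of the concrete steps you do carry out are wrong.

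First, the interpolation check. You assert that \eqref{eqn: Wells condition} forces $d_a(\tilde a) \leq d_b(\tilde a)$, hence $\tilde a \in \{a\}_\ast$. That is false. Writing out the inequality one finds only
\begin{equation*}
d_a(\tilde a) - d_b(\tilde a) \leq D_af \cdot (\tilde a - \tilde b),
\end{equation*}
whose right-hand side has no sign. A one-dimensional counterexample: $a=0$, $b=1$, $D_af=-1$, $D_bf=0$, $M=4$, $f_b=0$, $f_a = 1.4375$ (which makes \eqref{eqn: Wells condition} hold, with equality in one direction); then $\tilde a = 0.25$, $\tilde b = 1$, and $d_a(\tilde a) = 1.3125 > 0.5625 = d_b(\tilde a)$. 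The point that actually needs to lie in $\{a\}_\ast$ for the decomposition $a = \tfrac12(\tilde a + (2a-\tilde a))$ to land $a$ in $T_{\{a\}}$ is $2a-\tilde a = a + D_af/M$, not $\tilde a$, and \emph{that} inequality $d_a(a+D_af/M) \leq d_c(a+D_af/M)$ does follow from \eqref{eqn: Wells condition} by a short algebraic identity (in the example above one gets $d_a = d_b = 1.5625$ at $2a - \tilde a = -0.25$). So the conclusion is salvageable, but your stated reason is incorrect and, as written, the step fails.

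Second, the sharpness $\Lip(\nabla F) \geq M$. You propose reading it off from a pair $a,b$ at which \eqref{eqn: Wells condition} is tight, but the hypothesis of the theorem places no such pair at your disposal: if $M$ is chosen strictly larger than the minimal admissible value, \eqref{eqn: Wells condition} is strict for every pair, yet the theorem still claims $\Lip(\nabla F)=M$. The sharpness is already sitting in your own intra-cell computation: since the parallels of $S_E$ and $S_H$ are orthogonal complements, $\pi_{S_E} - \pi_{S_H}$ is an orthogonal involution, so your displayed $\leq$ is in fact an equality, $|\nabla F_S(x_1) - \nabla F_S(x_2)| = M|x_1-x_2|$ for all $x_1,x_2 \in T_S$. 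Any $d$-dimensional cell $T_S$ therefore witnesses $\Lip(\nabla F) \geq M$. Noticing that this is an equality also clarifies that the path-concatenation step is genuinely needed (one cannot hope for strict contraction inside cells to absorb slack across cells); your sketch of that step is fine.

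Beyond these two points, the covering of $\R^d$ by $\{T_S\}_{S\in\K}$ up to measure zero and the $C^1$ matching across $T_S \cap T_{S'}$ are, as you say, the real content of Wells' theorem, and your outline does not yet supply an argument for either; the Minkowski-average structure of $T_S$ means this does not reduce to a known cell-complex fact and needs the detailed analysis in Wells' Section 4.
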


\subsection{Le Gruyer: The minimal value of $\Lip (\nabla F)$}

While the result of Wells gives a construction for an interpolant with
prescribed semi-norm $M$, it does not explicitly give the minimum possible
value of $M$. Recall that this minimum value is defined for $1$-fields
as:
\begin{equation*}
\|P\|_{C^{1,1}(E)} = \inf \{ \Lip(\nabla \widetilde{F}) \mid J_a \widetilde{F} = P_a \text{ for
  all } a \in E\},
\end{equation*}
and for functions as:
\begin{equation*}
\|f\|_{C^{1,1}(E)} = \inf \{ \Lip(\nabla \widetilde{F}) \mid \widetilde{F}(a) = f(a) \text{
  for all } a \in E\}.
\end{equation*}
As in \cite{legruyer:minLipschitzExt2009}, define the functional $\Gamma^1$ as:
\begin{equation} \label{eqn: Gamma1 orig}
\Gamma^1(P;E) = 2 \sup_{x \in \R^d} \max_{\substack{a,b \in E\\a \neq
    b}} \frac{|P_a(x) - P_b(x)|}{|a-x|^2 + |b-x|^2}.
\end{equation}
Recall that for functions we defined $\Gamma^1(f;E)$ as:
\begin{equation*}
\Gamma^1(f;E) = \inf \{ \Gamma^1(P;E) \mid P_a(a) = f(a) \text{ for
  all } a \in E\}.
\end{equation*}
The following two results show that $\Gamma^1$ is equivalent to $\|
\cdot \|_{C^{1,1}(E)}$.

\begin{theorem}[Le Gruyer,
  {\cite[Theorem 1.1]{legruyer:minLipschitzExt2009}}]\label{thm: le gruyer}
Given a set $E \subset \R^d$ and a 1-field $P: E \rightarrow
\PP$, 
\begin{equation*}
\Gamma^1(P;E) = \|P\|_{C^{1,1}(E)}.
\end{equation*}
\end{theorem}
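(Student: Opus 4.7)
The strategy is to prove the two inequalities separately, with the nontrivial direction delegated to Wells' Theorem \ref{thm: wells}.

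For $\Gamma^1(P;E) \leq \|P\|_{C^{1,1}(E)}$, I would fix any admissible extension $F \in C^{1,1}(\R^d)$ with $\Lip(\nabla F) = M < \infty$ and $J_aF = P_a$ for all $a \in E$. The standard Taylor remainder bound for $C^{1,1}$ functions gives $|F(x) - P_a(x)| \leq \frac{M}{2}|x-a|^2$ for every $a \in E$ and $x \in \R^d$, so the triangle inequality yields $|P_a(x) - P_b(x)| \leq \frac{M}{2}(|x-a|^2 + |x-b|^2)$ for all $a,b \in E$. Comparing with the definition \eqref{eqn: Gamma1 orig} gives $\Gamma^1(P;E) \leq M$, and taking the infimum over admissible $F$ finishes this direction.

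For $\|P\|_{C^{1,1}(E)} \leq \Gamma^1(P;E)$, I would set $M = \Gamma^1(P;E)$ and appeal to Wells' Theorem \ref{thm: wells}: the interpolant produced there witnesses $\|P\|_{C^{1,1}(E)} \leq M$ provided $M$ satisfies Wells' compatibility condition \eqref{eqn: Wells condition} for every pair $a, b \in E$. Thus the entire argument reduces to the algebraic equivalence that $\Gamma^1(P;E) \leq M$ if and only if \eqref{eqn: Wells condition} holds for every pair $a, b \in E$.

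To prove this equivalence, fix $a \neq b$ and abbreviate $m = \frac{1}{2}(a+b)$, $s = |a-b|$, $u = D_af - D_bf$, and $c = f_a - f_b + \frac{1}{2}(D_af + D_bf) \cdot (b-a)$. Direct computation yields the identities $P_a(x) - P_b(x) = u \cdot (x-m) + c$ and $|a-x|^2 + |b-x|^2 = 2|x-m|^2 + \frac{1}{2}s^2$, so after the substitution $y = x-m$ the bound $2|P_a(x) - P_b(x)| \leq M(|a-x|^2 + |b-x|^2)$ for all $x \in \R^d$ becomes the requirement that the two quadratic forms $M|y|^2 \mp u \cdot y + (\frac{M s^2}{4} \mp c) \geq 0$ hold for all $y \in \R^d$. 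Completing the square in $y$ reduces each of these to a scalar inequality $\pm c \leq \frac{M s^2}{4} - \frac{|u|^2}{4M}$, which after unwinding notation is precisely \eqref{eqn: Wells condition} applied to the ordered pairs $(a,b)$ and $(b,a)$. The bulk of the real work is packaged inside Wells' construction, which I would invoke as a black box; the only obstacle specific to this theorem is the square-completion bookkeeping just outlined, which is entirely routine once the correct change of variables $y = x-m$ has been identified.
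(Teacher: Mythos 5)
The paper does not prove this theorem — it is imported from Le Gruyer's paper by citation, and the surrounding discussion only remarks (without proof) that ``it is easy to show that $M = \Gamma^1(P;E)$ satisfies the [Wells] conditions.'' Your argument is correct as far as it goes and, in effect, formalizes that remark into a derivation of Le Gruyer's theorem \emph{from} Wells' theorem: the Taylor-remainder bound $|F(x) - P_a(x)| \leq \frac{M}{2}|x-a|^2$ gives $\Gamma^1(P;E) \leq \|P\|_{C^{1,1}(E)}$, and the square-completion you carry out verifies that $M = \Gamma^1(P;E)$ satisfies \eqref{eqn: Wells condition}, so Wells supplies an extension with $\Lip(\nabla F) = M$ and the reverse inequality follows. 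The algebra checks out: with $y = x - m$ one has $P_a(x) - P_b(x) = u \cdot y + c$ and $|a-x|^2 + |b-x|^2 = 2|y|^2 + \frac{1}{2}s^2$, and the two scalar inequalities $\pm c \leq \frac{Ms^2}{4} - \frac{|u|^2}{4M}$ are exactly Wells' condition for the ordered pairs $(b,a)$ and $(a,b)$. There is, however, a scope mismatch you should flag: Theorem~\ref{thm: wells} as stated requires $E$ to be \emph{finite}, whereas Theorem~\ref{thm: le gruyer} is stated for an arbitrary $E \subset \R^d$ (and Le Gruyer's original proof works in general Hilbert spaces without invoking Wells at all). So your route establishes the theorem only in the finite case — which is all the paper actually uses — but it is not a proof of the cited result in full generality, and one should either state the finiteness restriction explicitly or supply a limiting argument to remove it. You should also note the degenerate case $\Gamma^1(P;E) = 0$, where $P$ is a single affine polynomial and the $\frac{1}{4M}$ term in \eqref{eqn: Wells condition} is undefined; the extension is just that polynomial and the argument should handle this separately.
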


\begin{corollary}[Le Gruyer,
  {\cite[Theorem 3.2]{legruyer:minLipschitzExt2009}}]\label{cor: le gruyer}
Given a set $E \subset \R^d$ and a function $f: E \rightarrow \R$, 
\begin{equation*}
\Gamma^1(f;E) = \|f\|_{C^{1,1}(E)}.
\end{equation*}
\end{corollary}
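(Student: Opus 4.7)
The plan is to reduce Corollary~\ref{cor: le gruyer} to the 1-field version, Theorem~\ref{thm: le gruyer}, by rearranging the double infimum that arises when one unpacks the definition of $\Gamma^1(f;E)$. Applying Theorem~\ref{thm: le gruyer} to each admissible 1-field gives
\[
\Gamma^1(f;E) \;=\; \inf_{P_a(a) = f(a)} \Gamma^1(P;E) \;=\; \inf_{P_a(a) = f(a)} \;\inf_{J_a\widetilde{F} = P_a} \Lip(\nabla \widetilde{F}),
\]
so the task is to match this double infimum with $\|f\|_{C^{1,1}(E)} = \inf_{\widetilde{F}(a)=f(a)} \Lip(\nabla \widetilde{F})$.

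For the inequality $\|f\|_{C^{1,1}(E)} \leq \Gamma^1(f;E)$, I would fix any 1-field $P$ with $P_a(a)=f(a)$ and observe that any $\widetilde{F}$ satisfying $J_a\widetilde{F}=P_a$ automatically satisfies $\widetilde{F}(a)=P_a(a)=f(a)$. Hence the admissible set in the inner infimum is contained in the admissible set defining $\|f\|_{C^{1,1}(E)}$, giving $\|f\|_{C^{1,1}(E)}\leq \|P\|_{C^{1,1}(E)} = \Gamma^1(P;E)$. Taking the infimum over such $P$ closes this direction.

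For the reverse inequality $\Gamma^1(f;E) \leq \|f\|_{C^{1,1}(E)}$, the natural move is to produce, for each admissible $\widetilde{F}$ with $\widetilde{F}(a)=f(a)$, a specific 1-field admissible for $\Gamma^1(f;E)$. Set $P_a := J_a\widetilde{F}$; then $P_a(a)=\widetilde{F}(a)=f(a)$, so $P$ is admissible, and $\widetilde{F}$ itself witnesses $\|P\|_{C^{1,1}(E)} \leq \Lip(\nabla \widetilde{F})$. Invoking Theorem~\ref{thm: le gruyer} once more, $\Gamma^1(f;E) \leq \Gamma^1(P;E) = \|P\|_{C^{1,1}(E)} \leq \Lip(\nabla \widetilde{F})$, and taking the infimum over $\widetilde{F}$ finishes.

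There is no serious obstacle here: the corollary is a purely formal consequence of Theorem~\ref{thm: le gruyer} together with the observation that prescribing both function values and gradients is a refinement of prescribing function values alone. The only delicate point is that the infima may not be attained, so I would phrase the whole argument at the level of inequalities between infima rather than appealing to any minimizer.
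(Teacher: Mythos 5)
Your argument is correct. The paper does not prove this corollary itself---it simply cites Le Gruyer's Theorem 3.2---so there is no in-paper proof to compare against, but your double-infimum reduction from Theorem~\ref{thm: le gruyer} is exactly the formal argument that justifies calling the statement a corollary: prescribing jets refines prescribing function values, so the interpolant sets nest, and every admissible $\widetilde{F}$ induces an admissible 1-field via $P_a := J_a\widetilde{F}$.
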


The functional $\Gamma^1(P;E)$ has an alternate form which
will prove to be more useful than \eqref{eqn: Gamma1 orig} from a
computational perspective. Define two additional functionals:
\begin{align*}
A(P;a,b) &= \frac{|P_a(a) - P_b(a) + P_a(b) - P_b(b)|}{|a-b|^2}, \quad
a,b \in E\\
B(P;a,b) &= \frac{|\nabla P_a(a) - \nabla P_b(a)|}{|a-b|} =
\frac{|D_af - D_bf|}{|a-b|}, \quad a,b \in E.
\end{align*}
Then one can show \cite[Proposition 2.2]{legruyer:minLipschitzExt2009}:
\begin{equation} \label{eqn: Gamma1 alternate}
\Gamma^1(P;E) = \max_{\substack{a,b \in E\\a \neq b}} \sqrt{ A(P;a,b)^2
  + B(P;a,b)^2 } + A(P;a,b).
\end{equation}
This alternate form removes the supremum and reduces the work of
computing $\Gamma^1(P;E)$ to $O(N^2)$. Additionally, using \eqref{eqn:
  Gamma1 alternate} it is not hard to show that $M = \Gamma^1(P;E)$
satisfies the Wells condition \eqref{eqn: Wells condition}. Thus
combining Wells' Theorem \ref{thm: wells} and Le Gruyer's Theorem
\ref{thm: le gruyer} we arrive at a minimal interpolant for the
\textsc{Jet interpolation problem}. One can utilize Corollary
\ref{cor: le gruyer} and Theorem \ref{thm: wells} to obtain a solution
for the \textsc{Function interpolation problem}, assuming that when one
solves for $\Gamma^1(f;E)$ the minimizing 1-field is outputted as
well. 

In fact, by a recent result contained in
\cite{legruyer:LipOneFields2014}, the interpolant $F$ of Wells with $M
= \Gamma^1(P;E)$ is a
generalized absolutely minimal Lipschitz extension (AMLE) according to
the definition presented in \cite{hirn:quasiAMLE2012}. To understand
this statement, first note that the interpolant $F$ defines a
1-field through its jets, namely:
\begin{align*}
P^{(F)}: \R^d &\rightarrow \PP, \\
a &\mapsto J_aF.
\end{align*}
The functional $\Gamma^1$ can be thought of as the Lipschitz constant
for 1-fields. Indeed, aside from the main result that $\Gamma^1$
is equal to the minimum value of $\Lip(\nabla \widetilde{F})$ as
$\widetilde{F}$ ranges over all interpolants, one can show additionally that,
\begin{equation*}
\Gamma^1(P^{(F)};\R^d) = \Gamma^1(P;E).
\end{equation*}
Thus the 1-field $P^{(F)}$ extends $P$ while preserving
$\Gamma^1$. Note that this is analogous to the standard Lipschitz
extension problem between Hilbert spaces, for which it is known that any
Lipschitz function mapping a subset of Hilbert space to another
Hilbert space can be extended while preserving the Lipschitz
constant \cite{kirszbraun:lipschitzTransformations1934}. For real
valued Lipschitz extensions, the notion of an AMLE goes back to
Aronsson \cite{aronsson:minSupFI1965,aronsson:minSupFII1966,aronsson:AMLE1967}
and has been studied extensively due to its relationship to partial
differential equations \cite{jensen:uniqueAMLE1993}, stochastic games
\cite{peres:tugOfWar2009}, and applications in applied mathematics
\cite{aronsson:sandpile1965, aronsson:sandpiles1996,
  caselles:axiomaticImage1998, almansa:elevationModelAMLE2002}. An
AMLE is the locally best Lipschitz extension. The formal definition can be extended to
other functionals such as $\Gamma^1$, where in this case we say an
extension $Q: \R^d \rightarrow \PP$ of $P$ is an AMLE if
\begin{enumerate}
\item
$\Gamma^1(Q;\R^d) = \Gamma^1(P;E)$,
\item
For every open subset $V \subset \R^d \setminus E$,
\begin{equation*}
\Gamma^1(Q;V) = \Gamma^1(Q;\partial V).
\end{equation*}
\end{enumerate}
A result in \cite{legruyer:LipOneFields2014} states that $P^{(F)}$
is an AMLE when $E$ is finite. Thus the interpolant that our algorithm computes is an
AMLE for $C^{1,1}(\R^d)$. Given the interest in classical AMLEs,
having an algorithm to compute them in the $C^{1,1}(\R^d)$ case has
the potential to be of use in suitable applications. 

\subsection{Computational geometry}

We now review the relevant material from computational geometry.

\subsubsection{Well separated pairs decomposition} \label{sec: wspd}

The following is relevant for computing approximations of
$\Gamma^1$ when the number of points $N$ is large. The well
separated pairs decomposition was first introduced by
Callahan and Kosaraju in \cite{callahan:wspd1995}; we shall make use
of a modified version that was described in detail in
\cite{fefferman:fittingDataI}.

First, let $U,V \subset \R^d$ and recall the definitions of the diameter of a set
and the distance between two sets:
\begin{equation*}
\diam(U) = \sup_{\substack{x,y \in U \\ x \neq y}} |x-y|,
\qquad \distset(U,V) = \inf_{\substack{x \in U \\ y \in V}} |x-y|.
\end{equation*}
For $\varepsilon > 0$, two sets $U, V \subset \R^d$ are
$\varepsilon$\emph{-separated} if
\begin{equation*}
\max \{\diam(U), \diam(V)\} < \varepsilon \cdot \distset(U,V).
\end{equation*}

We follow the construction detailed by Fefferman and Klartag in
\cite{fefferman:fittingDataI}. Let $\T$ be a collection of subsets of
$E$. For any $\Lambda \subset
\T$, set
\begin{equation*}
\cup \Lambda = \bigcup_{S \in \Lambda} S = \{x \mid x \in S
\text{ for some } S \in \Lambda \}.
\end{equation*}
Let $\W$ be a set of pairs $(\Lambda_1, \Lambda_2)$ where $\Lambda_1,
\Lambda_2 \subset \T$. For any $\varepsilon > 0$, the pair $(\T, \W)$
is an \emph{$\varepsilon$-well separated pairs decomposition} or
\emph{$\varepsilon$-WSPD} for short if the following properties hold:
\begin{enumerate}

\item \label{item: F-K 1}
$\bigcup_{(\Lambda_1,\Lambda_2) \in \W} \cup \Lambda_1 \times \cup
\Lambda_2 = \{ (x,y) \in E \times E \mid x \neq y \}$.

\item \label{item: F-K 2}
If $(\Lambda_1, \Lambda_2), (\Lambda_1', \Lambda_2') \in \W$ are
distinct pairs, then $(\cup \Lambda_1 \times \cup \Lambda_2) \cap
(\cup \Lambda_1' \times \cup \Lambda_2') = \emptyset$.

\item
$\cup \Lambda_1$ and $\cup \Lambda_2$ are $\varepsilon$-separated for
any $(\Lambda_1, \Lambda_2) \in \W$.

\item
$\#(\T) < C(\varepsilon, d) N$ and $\#(\W) < C(\varepsilon, d) N$.

\end{enumerate}

As shown in \cite{fefferman:fittingDataI}, there is a data structure
representing $(\T,\W)$ that satisfies the following additional
properties as well:
\begin{enumerate}[resume]

\item
The amount of storage to hold the data structure is no more than $C
(\varepsilon, d) N$.

\item \label{item: F-K 6}
The following tasks require at most $C(\varepsilon, d) N \log N$ work and
$C(\varepsilon, d) N$ storage:
\begin{enumerate}

\item
Go over all $S \in \T$, and for each $S$ produce a list of elements in
$S$.

\item
Go over all $(\Lambda_1,\Lambda_2) \in W$, and for each $(\Lambda_1,
\Lambda_2)$ produce the elements (in $\T$) of $\Lambda_1$ and
$\Lambda_2$.

\item
Go over all $S \in \T$, and for each $S$ produce the list of all
$(\Lambda_1, \Lambda_2) \in W$ such that $S \in \Lambda_1$.

\item
Go over all $x \in E$, and for each $x \in E$ produce a list of $S \in
\T$ such that $x \in S$.

\end{enumerate}

\item \label{item: F-K 7}
As a result of property \ref{item: F-K 6}, the following properties also hold:
\begin{enumerate}[topsep=0pt]

\item
$\sum_{(\Lambda_1, \Lambda_2) \in \W} (\#(\Lambda_1) + \#(\Lambda_2)) <
C(\varepsilon,d) N\log N$.

\item
$\sum_{S \in \T} \#(S) < C(\varepsilon,d) N\log N$.

\end{enumerate}
\end{enumerate}
The next theorem gives bounds on the storage and work required to
compute an $\varepsilon$-WSPD.

\begin{theorem}[Fefferman and Klartag,
  {\cite[Theorem 5]{fefferman:fittingDataI}}] \label{thm: F-K WSPD}
There is an algorithm, whose inputs are the parameter $\varepsilon >
0$ and a set $E \subset \R^d$ with $\#(E) = N$, that outputs an
$\varepsilon$-WSPD $(\T,\W)$ of $E$ such that properties \ref{item:
  F-K 1},$\ldots$,\ref{item: F-K 7} hold. The algorithm requires no
more than $C(\varepsilon, d) N \log N$ work and
$C (\varepsilon, d) N$ storage.
\end{theorem}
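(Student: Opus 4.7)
The plan is to follow the classical Callahan--Kosaraju WSPD construction and then repackage the output in the grouped form $(\T,\W)$ used by Fefferman and Klartag. First I would compute a \emph{fair split tree} (equivalently a compressed quadtree) of $E$ in $O(N\log N)$ work and $O(N)$ storage. Each node $v$ of the tree is associated with an axis-aligned bounding box $B_v$ and a point set $E_v = E \cap B_v$; splitting children along the longest box side guarantees that every root-to-leaf path has length $O(\log N)$ and that each $x \in E$ belongs to only $O(\log N)$ nodes. Taking $\T = \{E_v : v \text{ a node}\}$ gives $\#(\T) = O(N)$ and $\sum_{S\in\T} \#(S) = O(N\log N)$, which matches property (7b).

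Next I would construct $\W$ by the standard recursive pair-enumeration: starting at the pair (root,root), call $\textsc{Pairs}(u,v)$, which emits the raw pair $(\{E_u\},\{E_v\})$ when $B_u$ and $B_v$ are $\varepsilon$-separated and otherwise recurses on the children of whichever box has larger diameter. A packing argument in $\R^d$ bounds the total number of emitted raw pairs by $C(\varepsilon,d)N$. Coverage (property (1)) follows because every $(x,y)$ eventually lies inside some emitted box-pair; disjointness (property (2)) follows because each unordered $\{x,y\}$ triggers a unique emission at the first moment of separation; the $\varepsilon$-separation clause (3) is immediate from the termination criterion; and property (4) is the packing bound together with $\#(\T)=O(N)$.

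To obtain the grouped pairs and the auxiliary data structures of properties (5)--(7), I would assemble during the recursion four structures: for each $S \in \T$ the list of its point members; for each $w \in \W$ the two component lists $\Lambda_1, \Lambda_2$; and the two reverse pointer tables (from nodes to containing pairs, and from points to containing nodes). Each list is populated by a single DFS over the tree or over the pair list. Storage is kept at $O(N)$ by retaining only parent/child pointers, member counts, and the raw pair list; the full enumerations of (6) are produced on demand in $O(N\log N)$ work. The total size bound (7a) then follows by grouping raw pairs $(E_u,E_v)$ that share a common side into a single $(\Lambda_1,\Lambda_2)$ entry and amortizing node counts along the tree depth.

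The main obstacle is verifying the disjointness clause (2) after the raw pairs have been grouped into multi-node $(\Lambda_1,\Lambda_2)$ entries, since a naive merge can create overlapping Cartesian products $\cup\Lambda_1 \times \cup\Lambda_2$. I would resolve this by fixing a canonical tie-breaking rule on the fair split tree, for example grouping each raw box-pair under the node of larger diameter with lexicographic tie-breaking, so that every raw pair is assigned to exactly one $\W$-entry. The Callahan--Kosaraju uniqueness lemma together with this tree ordering then yields pairwise disjointness of the grouped products, and all remaining bounds follow from the linear-size tree, the packing bound on raw pairs, and standard amortization of the DFS passes.
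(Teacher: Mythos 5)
The paper does not prove this theorem; it is cited directly from Fefferman and Klartag \cite{fefferman:fittingDataI}, and only Remark \ref{rem: C(e,d)} supplies a few supplementary details about the construction. So your proposal is best compared against that remark and against the cited source.

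There is a genuine gap in your argument, and it is precisely the point where the Fefferman--Klartag construction departs from the classical Callahan--Kosaraju one. You assert that the fair split tree, by splitting along the longest box side, ``guarantees that every root-to-leaf path has length $O(\log N)$ and that each $x\in E$ belongs to only $O(\log N)$ nodes.'' This is false for the Callahan--Kosaraju fair split tree: although it has $O(N)$ nodes, its \emph{depth} can be $\Omega(N)$ in the worst case (for instance, points placed at geometrically decaying separations along a line), because a fair split need not divide the point set evenly. Consequently a single point can lie in $\Omega(N)$ nodes, and the quantity $\sum_{S\in\T}\#(S)$ can be $\Omega(N^2)$ rather than $O(N\log N)$ as required by property \ref{item: F-K 7}. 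The same unbalance would also break your claimed $O(N\log N)$ bounds on the enumerations in property \ref{item: F-K 6}. As the paper's Remark \ref{rem: C(e,d)} explicitly states, the Fefferman--Klartag algorithm outputs a \emph{balanced} binary tree $\T$ of height at most $\lceil \log_2 N\rceil+1$ that is \emph{derived from} the (possibly unbalanced) Callahan--Kosaraju tree $T$; this rebalancing step, together with the re-expression of each Callahan--Kosaraju pair as a pair $(\Lambda_1,\Lambda_2)$ of node sets in the balanced tree, is the technical core that your sketch omits. Without it, you cannot identify $\T$ with the set of fair-split-tree nodes, and the storage/work bounds do not follow.

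The rest of your outline (packing argument for $\#(\W)=O(C(\varepsilon,d)N)$, the recursive pair-enumeration establishing coverage and $\varepsilon$-separation, the auxiliary pointer tables) is aligned with the standard approach, but the disjointness-after-grouping issue you flag at the end is also not resolved by a simple tie-breaking rule on the fair split tree: the grouping into $(\Lambda_1,\Lambda_2)$ is dictated by how subsets of the balanced tree $\T$ cover subsets of the unbalanced tree $T$, and disjointness of $\cup\Lambda_1\times\cup\Lambda_2$ is inherited from the disjointness of the original Callahan--Kosaraju pairs, not re-derived via lexicographic ordering. You would need to present the balanced-tree derivation to close both gaps.
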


\begin{remark} \label{rem: C(e,d)}
For $\varepsilon \leq 1/2$, the dimensional constant is $C (\varepsilon,
d) = (C \cdot \sqrt{d}/\varepsilon)^d$. Indeed, the algorithm of
Theorem \ref{thm: F-K WSPD} is built upon the WSPD algorithm originally
presented in \cite{callahan:wspd1995}. The algorithm of \cite{callahan:wspd1995} outputs
a similar WSPD $(T, W)$, in which $T$ is an unbalanced fair split
tree. The set $\T$ is a balanced binary tree, derived from
$T$. The height of $\T$ is no more than $\lceil \log_2 N
\rceil + 1$, and $\# (\T) < 2N$. Furthermore, the list of pairs $\W$ is in one-to-one
correspondence with $W$. Therefore the work and storage of both
algorithms is of the same order of magnitude. Examining
the proof of \cite[Lemma 4.2]{callahan:wspd1995} shows that the
number of pairs in $W$ is no more than $2(N-1)(3(\varepsilon^{-1}
\sqrt{d} + 2 \sqrt{d} + 1) + 2)^d)$, which is bounded from above by
$2N \cdot (10 \sqrt{d} / \varepsilon)^d$ when $\varepsilon \leq 1/2$.
\end{remark}

\subsubsection{Power diagrams, triangulations, and convex
  hulls} \label{sec: pd dt ch}

Now we switch to geometrical structures useful for computing the
interpolant $F$. A power diagram is a generalization of a Voronoi
diagram in which each
of the sites has an associated power function. Let $V \subset \R^d$ be a set of $n$
point sites. To each point $p \in V$, we associate a weight $w(p)$. The
power function $\pow: \R^d \times V \rightarrow \R$ measures the
distance from a point $x \in \R^d$ to a site $p \in V$ under the
influence of $w$. It is defined as:
\begin{equation*}
\pow(x,p) = |x-p|^2 - w(p).
\end{equation*}
The power cell of a point $p \in V$ is:
\begin{equation*}
\cell(p) = \{ x \in \R^d \mid \pow(x,p) \leq \pow(x,q), \enspace q
\in V \setminus \{p\} \}.
\end{equation*}
The set $\cell(p)$ can be empty; for generic sets $V$, when $\cell(p) \neq
\emptyset$, it is $d$-dimensional. Power cells are convex, but
possibly unbounded, polyhedra. The \emph{power diagram} of $V$,
denoted $\PD (V)$, is the convex polyhedral complex defined by these
cells.

The lower dimensional faces of $\PD (V)$ lie on the boundaries of the
power cells, which correspond to regions in $\R^d$ of equal power
between two or more sites. Define the face associated to the site $p$
and the set $U \subset V \setminus \{p\}$ as:
\begin{equation*}
\face (p, U) = \{ x \in \R^d \mid \pow (x, p) = \pow (x, q) \leq \pow (x,
r), \enspace q \in U, \enspace r \in V \setminus (U \cup \{p\}) \}.
\end{equation*}
Note that $\face (p, \emptyset) = \cell (p)$, and at times we will
refer to the power cell $\cell (p)$ as a $d$-dimensional face of $\PD
(V)$. Like the power cells, $\face (p, U)$ can be empty; when it is
not and the initial data $V$ is generic, $\dim \face (p, U) = d -
\#(U)$. All faces of $\PD (V)$ correspond to $\face (p, U)$ for some
site $p$ and set $U$. The $d-1$ dimensional faces are referred to as
facets, and the vertices (i.e., the zero dimensional faces) are called
\emph{power centers}. The latter are the points in $\R^d$ that are
equidistant to $d + 1$ points in $V$ relative to their power functions
(again assuming genericity of the initial data).

The geometric dual of $\PD (V)$ is a polyhedral cell complex $\DT (V)$
that satisfies the following property: For all $j = 0, \ldots, d$, there exists a bijective
mapping $\psi$ between the $j$-dimensional faces of $\PD (V)$ and the
$(d-j)$-dimensional faces of $\DT (V)$ such that if $\alpha, \beta$
are any two faces of $\PD (V)$, then $\alpha \subseteq \beta$ if and
only if $\psi (\beta) \subseteq \psi (\alpha)$. For generic initial data,
$\DT (V)$ is a triangulation, and when the power diagram is a Voronoi
diagram, $\DT (V)$ is a Delaunay triangulation \cite[Section
3.1.3]{aurenhammer:voronoi1991}. An example of a
power diagram and its dual  triangulation is given in Figure \ref{fig:
  PD and DT}.

\begin{figure}[h]
\center
\frame{\includegraphics[width=3.0in]{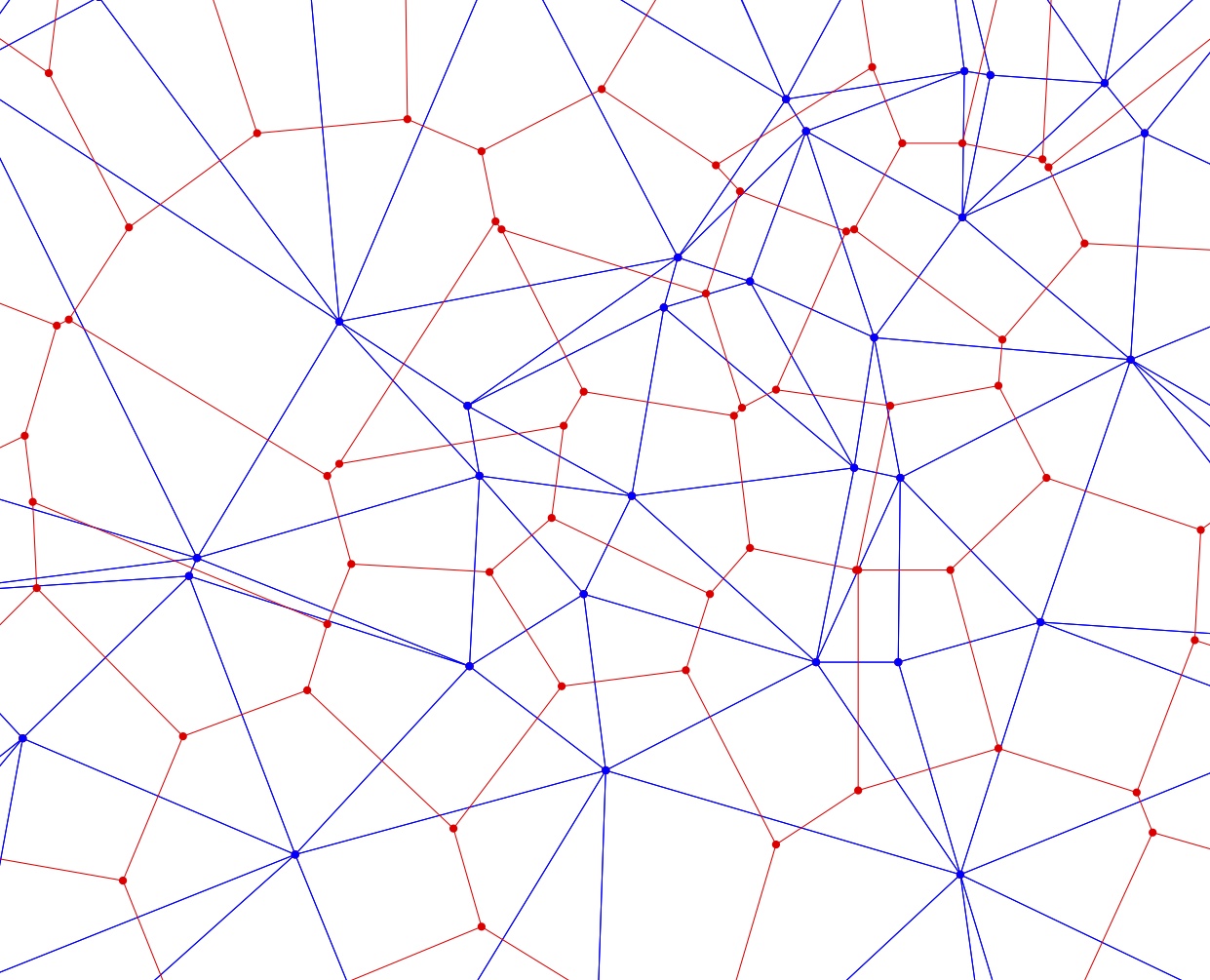}}
\caption{Power diagram in red with dual triangulation in
  blue. The original sites are the blue points, and the power centers
  are the red points.}
\label{fig: PD and DT}
\end{figure}

As shown in \cite[Section 3.1.3]{aurenhammer:voronoi1991}, there
is a close relationship between power diagrams and their dual
triangulations in $\R^d$, and convex hulls in $\R^{d+1}$. Indeed,
define the map $\lambda: V \rightarrow  \R^{d+1}$ as:
\begin{equation} \label{eqn: lambda lift}
\lambda(p) = (p, |p|^2 - w(p)).
\end{equation}
Consider the convex hull of the points $\{ \lambda(p) \mid p \in
V\}$. We can break it into two subsets, the lower hull and the upper
hull. We are interested in the lower hull, which consists of all
points that are visible from the point on the $x_{d+1}$ axis at
$-\infty$. Every $(d-j)$-dimensional face of the lower hull, for $j=0,
\ldots, d$, corresponds to a $(d-j)$-dimensional face of the 
triangulation $\DT(V)$. Furthermore, to obtain the faces of $\DT(V)$,
one simply makes an orthogonal projection of the lower hull back
onto $\R^d$. To obtain the power diagram $\PD(V)$, one uses the
duality of $\PD(V)$ to $\DT(V)$. An illustration of this process is
given in Figure \ref{fig: lifted CH for DT}. 

\begin{figure}[h]
\center
\subfigure[The lifted points]{
\includegraphics[width=2.0in]{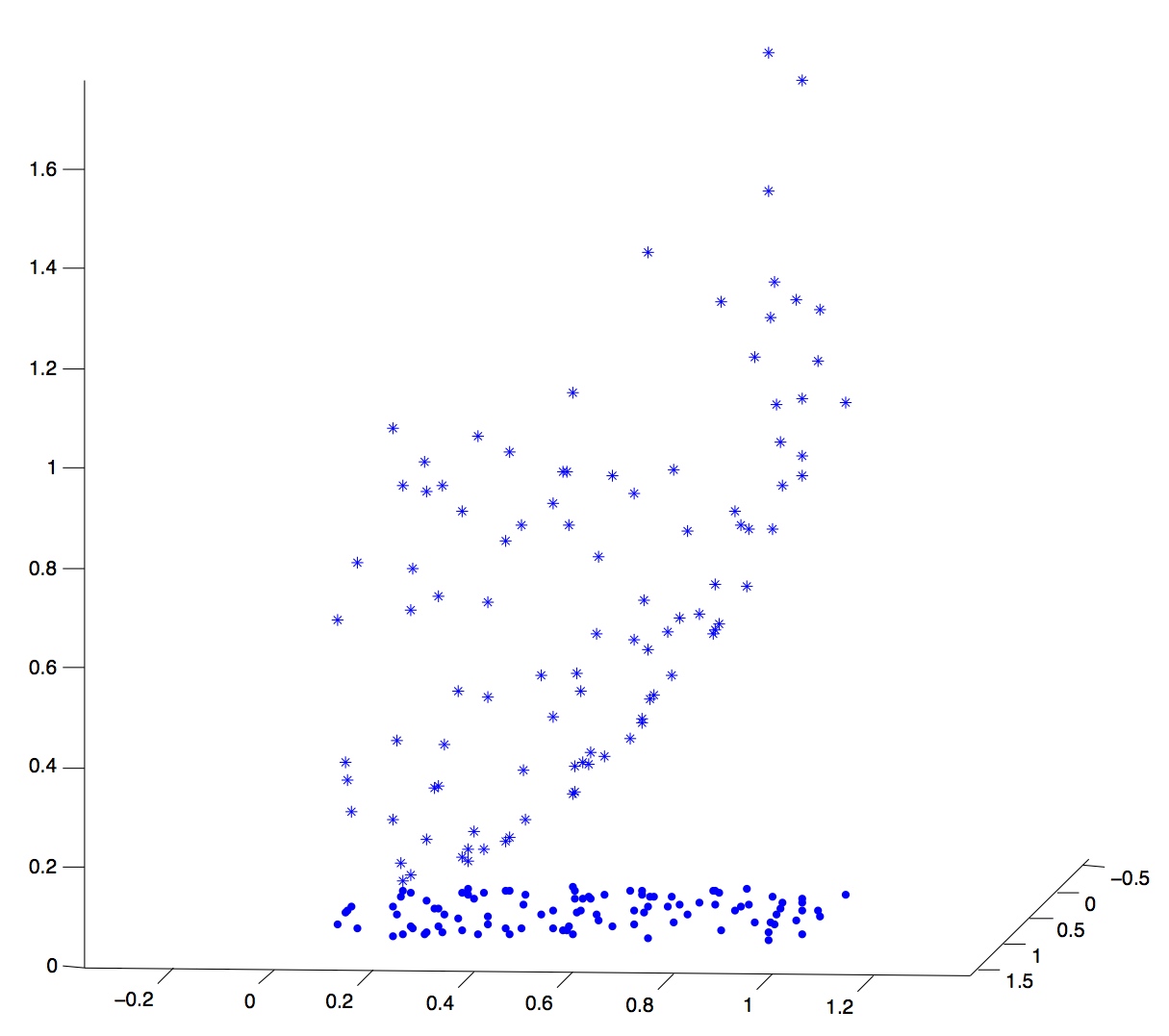}
}
\subfigure[The lower hull of the lifted points]{
\includegraphics[width=2.0in]{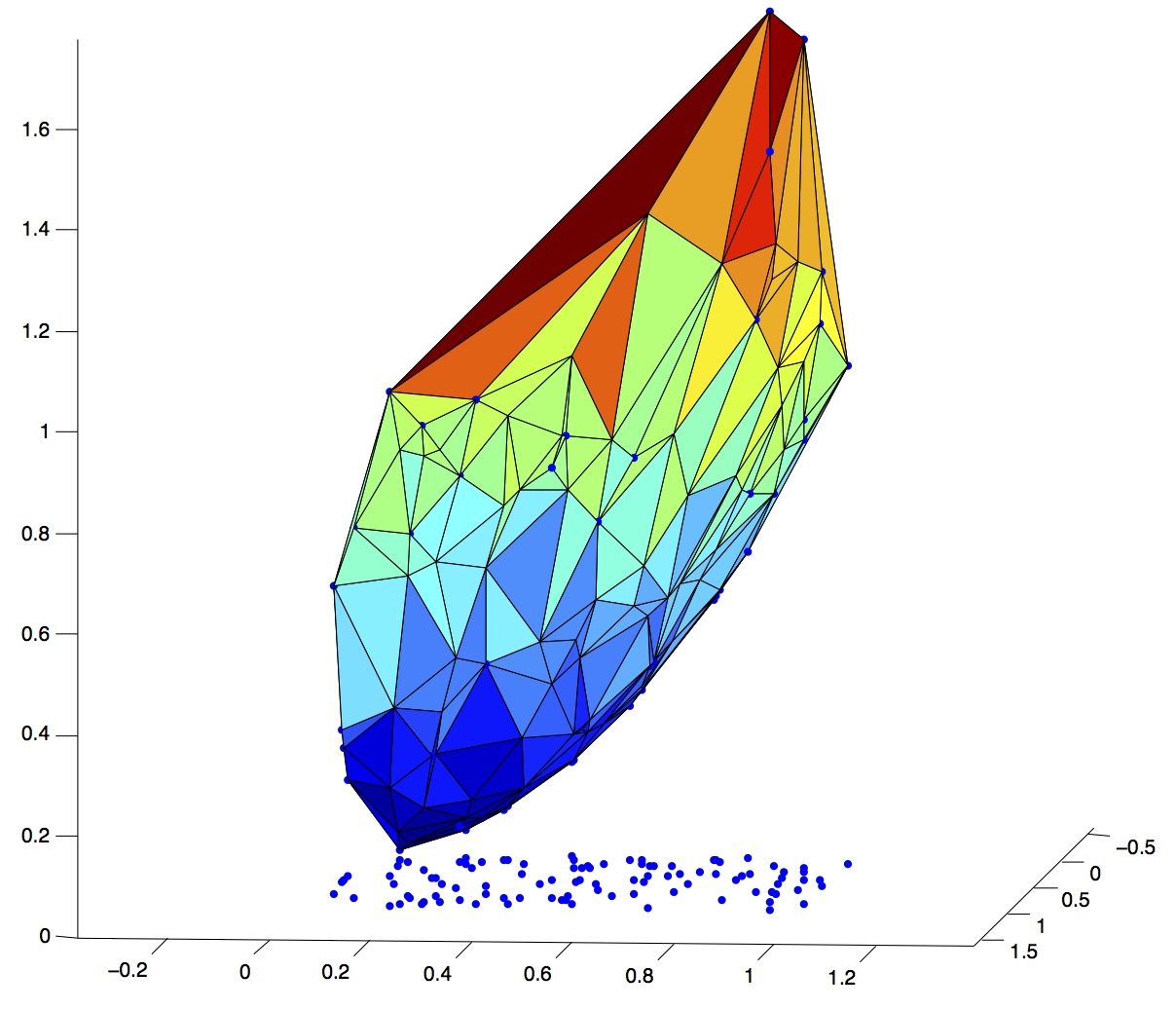}
}
\subfigure[View of the lower hull from below]{
\includegraphics[width=1.75in]{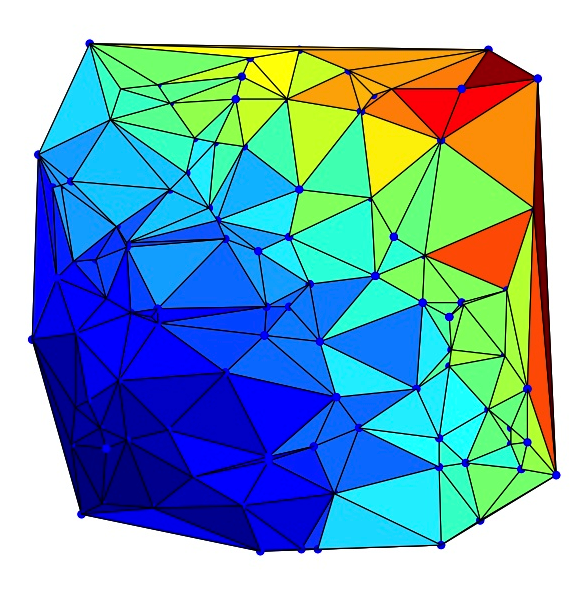}
}
\subfigure[Triangulation]{
\includegraphics[width=2.0in]{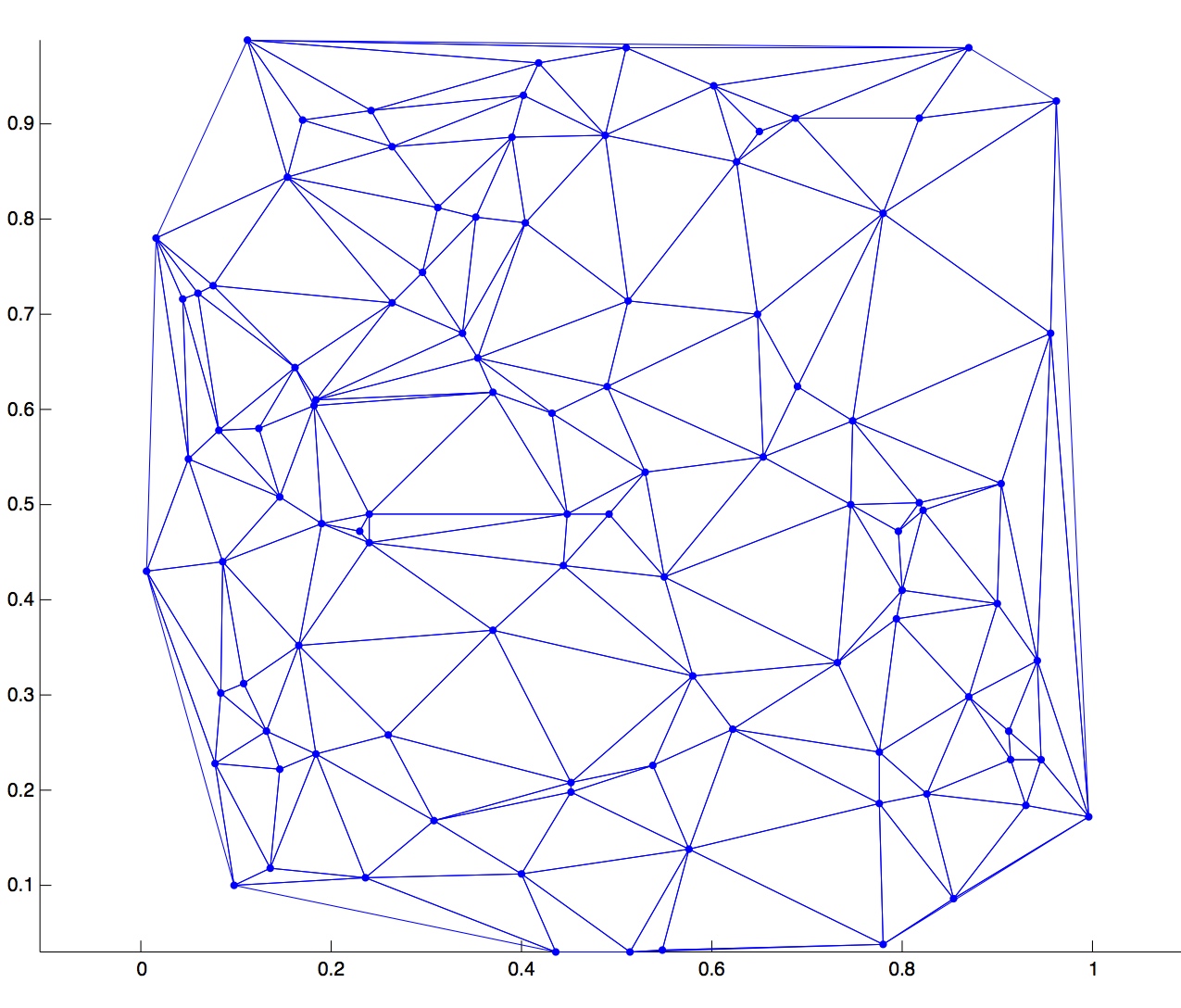}
}
\caption{Computing the dual triangulation of a power diagram
  via a convex hull in one dimension higher.}
\label{fig: lifted CH for DT}
\end{figure}

Thus to compute $\PD(V)$ and $\DT(V)$, we must compute a convex hull
in $\R^{d+1}$. This is a well studied problem with numerous algorithms
achieving optimal theoretical bounds in addition to others that work
efficiently in practice. We highlight some of these
algorithms here (let $C_{d+1}(N)$ denote the time needed to compute a
convex hull of $N$ points in $\R^{d+1}$):
\begin{enumerate}
\item
In \cite{clarkson:randSampGeoII1989, seidel:linProCHeasy1991,
  chazelle:optimalCH1993} worst case algorithms for general dimension
$d+1$ are given with complexity $C_{d+1}(N) = O(N\log N + N^{\lceil d/2
  \rceil})$. When $d=2$ this gives $O(N\log N)$ complexity. For higher
dimensions, the worst case is rather pessimistic when one considers
the average complexity over a family of convex hulls; see for example,
\cite{dwyer:avgConvHull1988}. 
\item
In \cite{seidel:chLogFace1986} an output sensitive algorithm for general dimension $d+1$ is given
which achieves $C_{d+1}(N) = O(N^2 + L\log L)$, where $L$ is the total
number of faces of the convex hull.
\item
For small dimensions greater than two, we have $C_4 = O((N+L)\log^2
L)$ \cite{chan:outputSensPolytope1995} and $C_5 = O((N+L) \log^3 L)$
\cite{amato:voronoiDivide1996}.
\item
The QuickHull algorithm \cite{barber:quickhull1996}, while not having provable bounds on its
complexity, is an output sensitive algorithm that empirically works
very well. It is able to handle numerical errors caused by
floating point arithmetic and is implemented for any dimension. In the
online code associated to this paper\footnote{Available at: \href{https://github.com/matthew-hirn/C-1-1-Interpolation}{\texttt{https://github.com/matthew-hirn/C-1-1-Interpolation}}}, we utilize this algorithm.
\end{enumerate}

\section{Overview of the algorithm} \label{sec: overview of algorithm}

The following is an overview of the algorithm for the \textsc{Jet
  interpolation problem} and the \textsc{Function interpolation
  problem}. The details of the algorithm are given in
Sections \ref{sec: compute gamma1}, \ref{sec: one time work}, and
\ref{sec: query work}. 

\subsection{Jet interpolation problem}

\noindent\underline{\textsc{Input (jet):}} The set $E \subset \R^d$
having $N$ points and the
$1$-field $P: E \rightarrow \PP$, which consists of the function
values $\{f_a\}_{a \in E} \subset \R$ and the gradients $\{D_af\}_{a
  \in E} \subset \R^d$. \\

\noindent\underline{\textsc{One time work, part I (jet):}} Compute $M
= \Lip(\nabla F)$, for which there are two options:
\begin{enumerate}

\item
$M = \Gamma^1(P;E)$ via direct calculation using \eqref{eqn: Gamma1
  alternate}, which requires $C \cdot d  \cdot N^2$ work and $C \cdot d
\cdot N$ storage.

\item[2.\footnotemark]\footnotetext{Not implemented in the online code
    available at:\\
    \href{https://github.com/matthew-hirn/C-1-1-Interpolation}{\texttt{https://github.com/matthew-hirn/C-1-1-Interpolation}}.}
Using the $\varepsilon$-WSPD of Section \ref{sec: wspd}, compute an
$M$ such that $cM \leq \Gamma^1(P;E) \leq M$, where $c < 1$ is an
absolute constant. The algorithm requires $(C \sqrt{d})^d \cdot N\log N$ work and
$(C \sqrt{d})^d \cdot N$ storage.

\end{enumerate}
\begin{remark}
The choice depends on the relative sizes of $N$ and $d$, as well as
the complexity of the remainder of the algorithm. For example, when
$d=2$ and $N$ is large, in practice (see Section \ref{sec: numerical
  simulations}) this step is the bottleneck of the entire algorithm if
one computes $M = \Gamma^1(P;E)$ exactly. In this case one might
want to utilize the second computation, since it gains a significant
speedup in $N$ while the exponential increase in $d$ is not much of a
factor since $d=2$. On the other hand, the second algorithm is
significantly more complicated to implement than the first, and scales
poorly for high dimensional interpolation problems. \\
\end{remark}

\noindent\underline{\textsc{Output, part I (jet):}} $M = \Lip(\nabla F)$. \\

\noindent\underline{\textsc{One time work, part II:}} Now we compute the
underlying geometrical structures of Wells' construction (Section
\ref{sec: wells}) using the set $E$, the $1$-field $P:E \rightarrow \PP$, and the value
$M$ computed from the \textsc{one time work, part I (jet)}. Recalling
Wells' set $\K$ from \eqref{eqn: wells special sets K}, define the
following two related sets:
\begin{align*}
&\widehat{\K} = \{ \widehat{S} \mid S \in \K \}, \\
&\K_{\ast} = \{ S_{\ast} \mid S \in \K\}.
\end{align*}
A key observation is that $\K_{\ast}$ is the power diagram of the
shifted points $\widetilde{E} = \{ \tilde{a} \mid a \in E\}$, and that $\widehat{\K}$
is its dual triangulation. The power function is,
\begin{equation*}
\pow (x,\tilde{a}) = \frac{4}{M} d_a(x),
\end{equation*}
where $d_a$ is the distance function defined in \eqref{eqn: wells da
  dist}, and the associated weight function is:
\begin{equation*}
w(\tilde{a}) = \frac{2}{M^2}|D_af|^2 - \frac{4}{M}f_a.
\end{equation*}
Sets $\{ a \}_{\ast} \in \K_{\ast}$ correspond to nonempty power cells
$\cell (\tilde{a})$, while sets $S_{\ast}$ for $\#(S) \geq 2$ are faces of the
power diagram $\PD (\widetilde{E})$. Additionally, there is a clear
bijective correspondence between $\K_{\ast}$ and $\widehat{\K}$, and
furthermore $S_{\ast} \subseteq S_{\ast}'$ if and only if $\widehat{S}'
\subseteq \widehat{S}$.

The main components of this part of the algorithm can be summarized as follows:
\begin{enumerate}

\item
Compute the shifted points $\widetilde{E} = \{ \tilde{a} \mid a \in E\}$. Requires
$O(N)$ work and $O(N)$ storage.

\item
Compute $\widehat{\K} = \DT (\widetilde{E})$ and $\K_{\ast} = \PD
(\widetilde{E})$ by first lifting the shifted points into $\R^{d+1}$
via the map $\lambda$ in \eqref{eqn: lambda lift}. Compute the convex
hull of the lifted points, and project back down into $\R^d$. This
gives $\DT (\widetilde{E})$. We store additionally for each $j$-dimensional face
$\widehat{S} \in \widehat{\K} = \DT (\widetilde{E})$ ($0 \leq j \leq d$), the
$(j-1)$-dimensional faces contained in $\widehat{S}$ (i.e., its
children) and the $(j+1)$-dimensional faces containing $\widehat{S}$
(i.e., its parents). Since $\K_{\ast} = \PD (\widetilde{E})$ is dual
to $\DT (\widetilde{E})$, we can derive it via duality.

The work of computing the convex hull is $O(N\log N +
N^{\lceil d/2 \rceil})$ and the storage for the convex hull
(equivalently the triangulation/power diagram) is $O(N^{\lceil d/2 \rceil})$
(see \cite{aurenhammer:powerDiagrams1987}). To calculate the children and
parents for each face in $\DT (\widetilde{E})$ requires $O (N^{d+1})$
work and $O(N^{\lceil d/2 \rceil + 1})$ storage. 

\item
Determine the point $S_C$ for each $S \in \K$ and compute
$d_S(S_C)$. Requires $O(N^{d+1})$ and $O(N^{\lceil d/2 \rceil})$ storage.

\item
Compute the sets $\{ T_S \}_{S \in \K}$ from $\widehat{\K} = \DT (\widetilde{E})$ and
$\K_{\ast} = \PD (\widetilde{E})$. Store each set $T_S$ as a pair
$(A_S,b_S)$, where $A_S$ is a matrix and $b_S$ is a vector, and $x \in
T_S$ if and only if $A_Sx \leq b_S$. Computing the full list of pairs
$\{ (A_S, b_S) \}_{S \in \K}$ requires $O(N^{d+2})$ work and
$O(N^{\lceil d/2 \rceil + 1})$ storage.\\

\end{enumerate}

\noindent\underline{\textsc{Query work:}} Given a query point $x \in \R^d$, one
must first determine which set $T_S$ it belongs to. There are two
methods to accomplish this task:
\begin{enumerate}
\item
A straightforward way is to check the inequalities $A_S x \leq b_S$
until one finds a pair $(A_S,b_S)$ that satisfies the condition for
$x$. In the worst case, one will have to check all of the
inequalities, which requires $O(N^{\lceil d/2 \rceil + 1})$ query work.

\item[{2.\footnotemark[\value{footnote}]}]
An alternate approach is to add an additional fifth step to the
\textsc{One time work, part II}. In this step, one places a tree
structure on the sets $\{T_S\}_{S \in \K}$ in which to each node we
associate a hyperplane and the leaves correspond to the sets
$\{T_S\}_{S \in \K}$. A query point $x$ is then passed down the tree
according to whether it lies to the left or right of the
hyperplane. If the tree is balanced, the query work is
$O(\log N)$. An algorithm that can guarantee a balanced tree can be found in
\cite{fuchs:polytopicPointLoc2010}; however, constructing the tree
requires solving an optimization problem for which it is not easy to
estimate the amount of work.
\end{enumerate}

Once the query point is placed in the correct set $T_S$, the function
$F_S$ is evaluated at $x$ and its gradient is computed. This requires
an amount of work that is dependent only on the dimension
$d$. \\

\noindent\underline{\textsc{Output, part II:}} $F(x)$ and $\nabla F(x)$ for
each query point $x \in \R^d$.

\subsection{Function interpolation problem}

In the case of the \textsc{Function interpolation problem}, we must
amend the inputs and the first part of the one time work; the
remainder of the algorithm is the same. \\

\noindent\underline{\textsc{Input (function):}} The set $E \subset \R^d$ and
the function $f: E \rightarrow \R$. \\

\noindent\underline{\textsc{One time work, part I (function):}} Compute $M =
\Lip(\nabla F)$ and $P_a = J_aF$, with $P_a(a) = f(a)$, for each $a
\in E$. The scalar $M$ satisfies $cM - 2\epsilon \leq \Gamma^1(f;E) \leq M$, with
$c < 1$ and $\epsilon > 0$. The algorithm uses the $\varepsilon$-WSPD in
conjunction with algorithms from convex optimization, and requires
$O(N^{7/2} (\log N)^{1/2} \log(N/\epsilon))$ work. \\

\noindent\underline{\textsc{Output, part I (function):}} $M =
\Lip(\nabla F)$ as well as a $1$-field $P$ such that $\Gamma^1(P; E) =
M$ and $P_a(a) = f(a)$ for all $a \in E$.

\section{Computing $\Gamma^1$} \label{sec: compute gamma1}

We now describe algorithms for computing $\Gamma^1$ or the
order of magnitude of $\Gamma^1$, for both the \textsc{Jet
  interpolation problem} and the \textsc{Function interpolation problem}.

\subsection{Jet interpolation problem}

For the \textsc{Jet interpolation problem}, as was discussed
in Section \ref{sec: overview of algorithm}, it is simple to compute
$\Gamma^1(P;E)$ exactly in $C \cdot d \cdot N^2$ work. In this section
we aim to improve the dependence on $N$ to be nearly linear, while
sacrificing a small amount of accuracy and some efficiency in the
dimension $d$. To that end, we prove the following theorem:

\begin{theorem} \label{thm: efficient L(PE)}
There is an algorithm, for which the inputs are the set $E \subset \R^d$ and
the 1-field $P: E \rightarrow \PP$, that computes a value $M$
satisfying:
\begin{equation*}
M/C_0 \leq \Gamma^1(P; E) \leq M,
\end{equation*}
where $C_0 > 1$ is an absolute constant. The algorithm
requires $C (d) \cdot N \log N$ work and $C(d) \cdot N$ storage.
\end{theorem}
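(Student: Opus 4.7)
The plan is to combine the alternate form \eqref{eqn: Gamma1 alternate} of $\Gamma^1$ with the $\varepsilon$-WSPD of Theorem \ref{thm: F-K WSPD}. Fix $\varepsilon$ to be a sufficiently small absolute constant (to be determined below), build the $\varepsilon$-WSPD $(\T, \W)$ of $E$, and for each pair $(\Lambda_1, \Lambda_2) \in \W$ select any representative $(a_{12}^*, b_{12}^*) \in (\cup \Lambda_1) \times (\cup \Lambda_2)$. Define
\begin{equation*}
M = \max_{(\Lambda_1, \Lambda_2) \in \W} \left( \sqrt{A(P; a_{12}^*, b_{12}^*)^2 + B(P; a_{12}^*, b_{12}^*)^2} + A(P; a_{12}^*, b_{12}^*) \right).
\end{equation*}
By Theorem \ref{thm: F-K WSPD} the preprocessing costs $C(\varepsilon, d) N \log N$ work and $C(\varepsilon, d) N$ storage, and iterating over the $O(C(\varepsilon, d) N)$ pairs in $\W$ and evaluating the expression above contributes only $O(d)$ per pair. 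With $\varepsilon$ an absolute constant, this yields the claimed $C(d) N \log N$ work and $C(d) N$ storage. The upper bound $M \leq \Gamma^1(P;E)$ is immediate from \eqref{eqn: Gamma1 alternate}, since each term entering the maximum is attained by an honest pair in $E$.

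For the lower bound, set $L := \Gamma^1(P;E)$ and fix any pair $(a, b) \in E \times E$ with $a \neq b$. Properties \ref{item: F-K 1} and \ref{item: F-K 2} produce a unique $(\Lambda_1, \Lambda_2) \in \W$ with $a \in \cup \Lambda_1$ and $b \in \cup \Lambda_2$; let $(a^*, b^*)$ be its representative. The $\varepsilon$-separation condition gives $|a - a^*|, |b - b^*| \leq \varepsilon |a^* - b^*|$, so $|a-b|$ and $|a^*-b^*|$ agree up to the factor $(1 \pm 2\varepsilon)$. The $B$ piece is direct: applying the triangle inequality to $|D_af - D_bf|$ via intermediate gradients at $a^*$ and $b^*$, and combining $B(P; a^*, b^*) \leq M$ with the trivial bounds $B(P; a, a^*), B(P; b^*, b) \leq L$, yields $B(P; a, b) \leq (M + 2 \varepsilon L)/(1 - 2\varepsilon)$. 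The $A$ piece is the delicate one because its numerator mixes function values and gradients. The key observation is that the Taylor-like remainder $P_a(a) - P_{a^*}(a) = f_a - f_{a^*} - D_{a^*}f \cdot (a - a^*)$ is bounded by $L |a - a^*|^2 / 2$; this follows from applying \eqref{eqn: Gamma1 orig} to the pair $\{a, a^*\}$ at $x = a$. Inserting $\pm P_{a^*}(\cdot)$ and $\pm P_{b^*}(\cdot)$ into the numerator $P_a(a) - P_b(a) + P_a(b) - P_b(b)$ and repeatedly applying this remainder bound, together with $B(P; \cdot, \cdot) \leq L$ on the gradient cross-terms, yields $A(P; a, b) \leq \alpha M + \beta \varepsilon L$ for dimensionless constants $\alpha, \beta$.

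Combining the two pieces gives $\sqrt{A(P;a,b)^2 + B(P;a,b)^2} + A(P;a,b) \leq C_1 M + C_2 \varepsilon L$ at $(a,b)$ for absolute constants $C_1, C_2$. Taking the supremum over pairs yields $L \leq C_1 M + C_2 \varepsilon L$, and selecting $\varepsilon$ so that $C_2 \varepsilon \leq 1/2$ absorbs the $L$ term into the left-hand side, giving $L \leq 2 C_1 M =: C_0 M$. The main obstacle is the $A$-term bookkeeping: unlike $B$, the $A$-numerator is not visibly Lipschitz in the pair $(a, b)$, so one must carefully separate the quadratic Taylor remainders (bounded by $L \cdot |a-a^*|^2$) from the first-order gradient differences (bounded by $L \cdot |a-a^*|$), and the apparently circular appearance of $L$ on both sides of the final inequality is removed cleanly by the $\varepsilon$-absorption step.
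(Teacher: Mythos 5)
Your proof is correct, and it takes a genuinely different route from the paper's. The paper first replaces $\Gamma^1$ by the equivalent functional $\widetilde{\Gamma}^1 = \max\{\widetilde{A}, B\}$ (Lemma \ref{lem: first gamma estimate}), and then computes representative values not only over the top-level pairs $(\Lambda_1,\Lambda_2)\in\W$ but also over the intermediate pairs $(a_{\Lambda_i}, a_S)$ and $(a, a_S)$ drawn from the tree structure of the WSPD. This gives the paper computed quantities $\widetilde{A}_2, \widetilde{A}_3, B_2, B_3$ with which to bound each link in the chain $a \to a_S \to a_{\Lambda_1} \to a_{\Lambda_2} \to a_T \to b$, so the comparison $\widetilde{\Gamma}^1(P;E) \leq (3+23\varepsilon)\,\widetilde{\Gamma}^1(P;\T,\W)$ is obtained directly, with no circular appearance of $\Gamma^1(P;E)$. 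You instead compute the functional only over one representative pair $(a^*, b^*)$ per $(\Lambda_1,\Lambda_2) \in \W$, bound the ``approach'' terms like $|P_a(a) - P_{a^*}(a)|$ and $|D_af - D_{a^*}f|$ by the unknown $L = \Gamma^1(P;E)$ itself (gaining a factor of $\varepsilon$ or $\varepsilon^2$ from the separation condition), and then absorb: from $L \leq C_1 M + C_2\varepsilon L$ with $E$ finite (so $L < \infty$) and $\varepsilon$ small you deduce $L \leq 2C_1 M$. This buys you a leaner algorithm (no need to sweep the tree levels of $\T$), at the cost of a slightly less transparent, bootstrap-style proof; you also work with $A, B$ from the alternate form of $\Gamma^1$ directly rather than passing through $\widetilde{A}$, which is harmless. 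One small slip: your bound $B(P;a,b) \leq (M + 2\varepsilon L)/(1-2\varepsilon)$ should read $B(P;a,b) \leq (1+2\varepsilon)M + 2\varepsilon L$ (the factor $|a^*-b^*| \leq (1+2\varepsilon)|a-b|$ enters multiplicatively, not as a division), but this does not affect the structure of the argument or the absorption step. Both approaches deliver $O(C(\varepsilon,d) N\log N)$ work and comparable absolute constants $C_0$.
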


The plan for proving Theorem \ref{thm: efficient L(PE)} is the
following. First we view the $\Gamma^1(P;E)$ functional from the
perspective of Whitney's Extension Theorem (i.e., Theorem \ref{thm: WET C11}) for
$C^{1,1}(\R^d)$. Once we formalize this concept, we can use the
$\varepsilon$-WSPD of Section \ref{sec: wspd}, since it is constructed to
handle interpolants in $C^m(\R^d)$ satisfying Whitney
conditions. The parameter $\varepsilon$ can be taken as any fixed number in
$(0, 1)$. The absolute constant $C_0$ of Theorem \ref{thm: efficient
  L(PE)} decreases linearly with $\varepsilon$ as $C_0 = 2 (1 +
\sqrt{2}) (3 + 23\varepsilon)$. However the dimensional
constant $C(d)$ increases exponentially with $d$, and in particular
$C(d) = (C \cdot \sqrt{d} / \varepsilon )^d$ for any $\varepsilon \leq
1/2$  (see Remark \ref{rem: C(e,d)}).

Concerning the first part of our approach, recall that if we take the infimum over all
$M$ satisfying the Whitney conditions \ref{W0} and \ref{W1}, then we
obtain a value that is within a constant $C'(d)$ of
$\|P\|_{C^{1,1}(E)}$. The main contribution of
\cite{legruyer:minLipschitzExt2009} is to refine \ref{W0} and \ref{W1}
so that $C'(d) = 1$; this is $\Gamma^1(P;E)$. Indeed, referring to
the alternate form of $\Gamma^1(P;E)$ given in \eqref{eqn: Gamma1
  alternate}, the functional $A$
corresponds to \ref{W0}, the functional $B$ corresponds to
\ref{W1}, and $\Gamma^1(P;E)$ pieces them together. Note there are some
small, but significant differences. In particular, the functional $A$
is essentially a symmetric version of \ref{W0}; using one is
equivalent to using the other, up to a factor of two. The functional
$B$ though, merges all of the partial derivative information into one
condition, unlike \ref{W1}. Thus they are equivalent only up to a
factor of $d$, the dimension of the Euclidean space we are working
in. For the algorithm in this section, we will use the functional $B$
since it is both simpler and more useful than \ref{W1}, but use \ref{W0}
instead of $A$. Additionally, we will treat them separately instead of together like in
$\Gamma^1(P;E)$; Lemma \ref{lem: first gamma estimate} contains the details.

For the 1-field $P: E \rightarrow \PP$,
define the functional $\widetilde{A}$, which is essentially the same as \ref{W0}:
\begin{equation*}
\widetilde{A}(P;a,b) = \frac{|P_a(a) - P_b(a)|}{|a-b|^2},
\quad a,b \in E.
\end{equation*}
Additionally, set
\begin{equation*}
\widetilde{\Gamma}^1(P;E) = \max_{\substack{a,b \in E \\ a
    \neq b}} \Big\{ \max\{\widetilde{A}(P;a,b), B(P;a,b)\} \Big\}.
\end{equation*}
The functional $\widetilde{\Gamma}^1(P;E)$ is more easily approximated
via the $\varepsilon$-WSPD than $\Gamma^1(P;E)$. Furthermore, as the
following lemma shows, they have the same order of magnitude.

\begin{lemma} \label{lem: first gamma estimate}
For any finite set $E \subset \R^d$ and any 1-field $P: E \rightarrow \PP$,
\begin{equation*}
\widetilde{\Gamma}^1(P;E) \leq \Gamma^1(P;E) \leq 2(1+ \sqrt{2})
\widetilde{\Gamma}^1(P;E).
\end{equation*}
\end{lemma}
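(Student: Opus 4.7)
The plan is to reduce everything to a ``polarization-style'' identity that relates $\widetilde{A}$ to $A$ via $B$. Fix distinct $a,b \in E$ and set
\begin{equation*}
u = P_a(a) - P_b(a), \qquad v = P_a(b) - P_b(b).
\end{equation*}
A direct computation with $P_a(x) = f_a + D_af \cdot (x-a)$ gives $u + v = 2(f_a-f_b) + (D_af + D_bf)\cdot(b-a)$ and $u - v = (D_bf - D_af)\cdot(b-a)$. Hence
\begin{equation*}
\frac{|u+v|}{|a-b|^2} = A(P;a,b), \qquad \frac{|u|}{|a-b|^2} = \widetilde{A}(P;a,b), \qquad \frac{|v|}{|a-b|^2} = \widetilde{A}(P;b,a),
\end{equation*}
and $|u-v| \leq |D_af - D_bf|\,|a-b| = B(P;a,b)\,|a-b|^2$. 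From the identity $2u = (u+v) + (u-v)$ and the triangle inequality we get $2\widetilde{A}(P;a,b) \leq A(P;a,b) + B(P;a,b)$, while from $|u+v| \leq |u|+|v|$ we get $A(P;a,b) \leq \widetilde{A}(P;a,b) + \widetilde{A}(P;b,a)$. These are the two workhorse estimates.

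For the lower bound $\widetilde{\Gamma}^1 \leq \Gamma^1$, first note that trivially $B(P;a,b) \leq \sqrt{A^2 + B^2} \leq \Gamma^1(P;E)$. Next, combining the first inequality above with $B \leq \sqrt{A^2+B^2}$ yields
\begin{equation*}
\widetilde{A}(P;a,b) \leq \tfrac{1}{2}(A+B) \leq A + \sqrt{A^2+B^2} \leq \Gamma^1(P;E),
\end{equation*}
since $\tfrac{1}{2}(A+B) \leq A + \sqrt{A^2+B^2}$ follows from $B - A \leq B \leq 2\sqrt{A^2+B^2}$. Taking the maximum over $a \neq b$ of both $\widetilde{A}$ and $B$ gives $\widetilde{\Gamma}^1(P;E) \leq \Gamma^1(P;E)$.

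For the upper bound, fix $a \neq b$ realizing (or approximating) the maximum in $\Gamma^1(P;E)$, and write $M := \widetilde{\Gamma}^1(P;E)$. The triangle-inequality estimate $A(P;a,b) \leq \widetilde{A}(P;a,b) + \widetilde{A}(P;b,a) \leq 2M$ and the trivial bound $B(P;a,b) \leq M$ give $\max(A,B) \leq 2M$. Using $\sqrt{A^2+B^2} \leq \sqrt{2}\,\max(A,B)$ we conclude
\begin{equation*}
\Gamma^1(P;E) = A(P;a,b) + \sqrt{A(P;a,b)^2 + B(P;a,b)^2} \leq 2M + 2\sqrt{2}\,M = 2(1+\sqrt{2})\,\widetilde{\Gamma}^1(P;E).
\end{equation*}

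There is no real obstacle here; the only thing to get right is the polarization identity $2u = (u+v)+(u-v)$ together with the correct identification of $u$ and $v$ with $\widetilde{A}(P;a,b)$ and $\widetilde{A}(P;b,a)$ (note the asymmetry in $\widetilde{A}$ is essential for the upper bound, whereas $A$ is symmetric). Everything else is an application of the triangle inequality in $\R$ and the inequalities $B \leq \sqrt{A^2+B^2}$ and $\sqrt{A^2+B^2} \leq \sqrt{2}\max(A,B)$.
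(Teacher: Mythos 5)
Your proof is correct and follows essentially the same approach as the paper's: both rest on the polarization identity $2u = (u+v) + (u-v)$ with $u = P_a(a)-P_b(a)$, $v = P_a(b)-P_b(b)$ (giving $2\widetilde{A} \leq A + B$), the triangle inequality $|u+v|\leq |u|+|v|$ (giving $A \leq 2\widetilde{\Gamma}^1$), and the elementary bound $\sqrt{A^2+B^2} \leq \sqrt{2}\max\{A,B\}$. The paper organizes this by factoring through the intermediate quantity $\overline{\Gamma}^1 = \max_{a\neq b}\max\{A,B\}$, proving $\widetilde{\Gamma}^1 \leq \overline{\Gamma}^1 \leq 2\widetilde{\Gamma}^1$ and $\overline{\Gamma}^1 \leq \Gamma^1 \leq (1+\sqrt{2})\overline{\Gamma}^1$, whereas you compose the same estimates directly, arriving at the identical constant $2(1+\sqrt{2})$.
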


\begin{proof}
To bridge the gap between $\Gamma^1(P;E)$ and
$\widetilde{\Gamma}^1(P;E)$, we first consider
\begin{equation*}
\overline{\Gamma}^1(P;E) = \max_{\substack{a,b \in E \\ a \neq b}}
\Big\{ \max \{A(P;a,b), B(P;a,b)\} \Big\}.
\end{equation*}
Clearly $\overline{\Gamma}^1(P;E) \leq \Gamma^1(P;E)$. Furthermore,
\begin{align*}
\Gamma^1(P;E) &= \max_{\substack{a,b \in E \\ a \neq b}} \sqrt{A(P;a,b)^2
  + B(P;a,b)^2} + A(P;a,b) \\
&\leq \sqrt{\overline{\Gamma}^1(P;E)^2 + \overline{\Gamma}^1(P;E)^2} + \overline{\Gamma}^1(P;E) \\
&\leq (1 + \sqrt{2})\overline{\Gamma}^1(P;E).
\end{align*}
Thus $\Gamma^1(P;E)$ and $\overline{\Gamma}^1(P;E)$ have the same order of
magnitude, and in particular,
\begin{equation}\label{eqn: Gamma and Gamma'}
\overline{\Gamma}^1(P;E) \leq \Gamma^1(P;E) \leq (1+\sqrt{2})\overline{\Gamma}^1(P;E).
\end{equation}

Now let us consider $\overline{\Gamma}^1(P;E)$ and
$\widetilde{\Gamma}^1(P;E)$ (which means considering
$A(P;a,b)$ and $\widetilde{A}(P;a,b)$). First,
\begin{align*}
|P_a(a) - P_b(a) + P_a(b) - P_b(b)| &\leq |P_a(a) - P_b(a)| + |P_a(b)
- P_b(b)| \\
&\leq 2\widetilde{\Gamma}^1(P;E) |a-b|^2,
\end{align*}
and so, $\overline{\Gamma}^1(P;E) \leq 2\widetilde{\Gamma}^1(P;E)$. For a
reverse inequality, we note,
\begin{equation*}
|P_a(a) - P_b(a) + P_a(b) - P_b(b)| = |2(P_a(a) - P_b(a)) + (\nabla
P_b - \nabla P_a) \cdot (a-b)|.
\end{equation*}
Thus,
\begin{align*}
2|P_a(a) - P_b(a)| &\leq \overline{\Gamma}^1(P;E)|a-b|^2 + |(\nabla P_b - \nabla
P_a) \cdot (a-b)| \\
&\leq 2\overline{\Gamma}^1(P;E) |a-b|^2,
\end{align*}
which yields $\widetilde{\Gamma}^1(P;E) \leq
\overline{\Gamma}^1(P;E)$. Combining the two inequalities,
\begin{equation}\label{eqn: Gamma' and Gammatilde}
\widetilde{\Gamma}^1(P;E) \leq \overline{\Gamma}^1(P;E) \leq 2\widetilde{\Gamma}^1(P;E).
\end{equation}
Putting \eqref{eqn: Gamma and Gamma'} and \eqref{eqn: Gamma' and
  Gammatilde} together completes the proof.
\end{proof}

We will also need the following simple lemmas.
\begin{lemma}\label{lem: wspd estimates}
Let $(\T,\W)$ be a $\varepsilon$-WSPD, $(\Lambda_1, \Lambda_2) \in
\W$, $x,x',x'' \in \cup \Lambda_1$, and $y,y' \in \cup
\Lambda_2$. Then,
\begin{align*}
&|x'-x''| \leq \varepsilon |x-y| \\
&|x'-y'| \leq (1+2\varepsilon) |x-y|.
\end{align*}
\end{lemma}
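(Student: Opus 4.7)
The plan is to unpack the definition of $\varepsilon$-separation for the pair $(\cup\Lambda_1,\cup\Lambda_2)$ and apply the triangle inequality. Both claims follow essentially by bounding intra-cluster distances by the diameter and bounding that diameter in terms of the inter-cluster distance.

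For the first inequality, I would observe that $x',x''\in\cup\Lambda_1$ immediately gives $|x'-x''|\leq \diam(\cup\Lambda_1)$. The $\varepsilon$-WSPD property says $\cup\Lambda_1$ and $\cup\Lambda_2$ are $\varepsilon$-separated, so
\begin{equation*}
\diam(\cup\Lambda_1) < \varepsilon\cdot\distset(\cup\Lambda_1,\cup\Lambda_2).
\end{equation*}
Since $x\in\cup\Lambda_1$ and $y\in\cup\Lambda_2$, the infimum defining $\distset$ is bounded above by $|x-y|$, and combining these yields the claim. (Strict vs.\ weak inequality is harmless here.)

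For the second inequality, the natural route is a three-term triangle inequality: $|x'-y'|\leq |x'-x|+|x-y|+|y-y'|$. Then $|x'-x|\leq\varepsilon|x-y|$ by applying the first inequality in $\cup\Lambda_1$ (take $x''=x$), and symmetrically $|y-y'|\leq\varepsilon|x-y|$ by applying the same argument with the roles of $\Lambda_1$ and $\Lambda_2$ exchanged, using that $\cup\Lambda_2$ also has diameter less than $\varepsilon\cdot\distset(\cup\Lambda_1,\cup\Lambda_2)$.

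There is no real obstacle: the entire content is the definition of $\varepsilon$-separation combined with the fact that the distance between two sets is bounded above by the distance between any pair of their elements, plus a two-step triangle inequality. The only minor bookkeeping point is making sure the diameter bound applies to both $\cup\Lambda_1$ and $\cup\Lambda_2$, which is why the definition of $\varepsilon$-separation takes the maximum of the two diameters.
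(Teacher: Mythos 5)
Your proof is correct and is exactly what the paper intends by its one-line proof (``Use the definition of $\varepsilon$-separated''): bound intra-cluster distances by the diameter, bound the diameter by $\varepsilon\cdot\distset$, bound $\distset$ by $|x-y|$, and for the second inequality add a three-term triangle inequality.
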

\begin{proof}
Use the definition of $\varepsilon$-separated.
\end{proof}

\begin{lemma}\label{lem: polynomial shift}
Suppose $p \in \PP$, $x \in \R^d$, $\delta > 0$, and $M > 0$
satisfy
\begin{align*}
&|p(x)| \leq M \delta^2 \\
&|\nabla p(x)| \leq M \delta.
\end{align*}
Then, for any $y \in \R^d$,
\begin{equation*}
|p(y)| \leq M(\delta + |x-y|)^2.
\end{equation*}
\end{lemma}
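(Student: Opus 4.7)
The plan is straightforward because $p$ is a first-order polynomial, so $\nabla p$ is a constant vector and we have the exact identity
\begin{equation*}
p(y) = p(x) + \nabla p(x) \cdot (y-x).
\end{equation*}
From here the bound on $|p(y)|$ will follow from the triangle inequality, Cauchy--Schwarz, and a one-line algebraic comparison.

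First I would apply the triangle inequality to the above identity together with the Cauchy--Schwarz inequality $|\nabla p(x) \cdot (y-x)| \leq |\nabla p(x)| \cdot |y-x|$, and then plug in the two hypotheses to obtain
\begin{equation*}
|p(y)| \leq |p(x)| + |\nabla p(x)| \cdot |y-x| \leq M\delta^2 + M\delta |x-y|.
\end{equation*}
Then I would note that the right-hand side is dominated by $M(\delta + |x-y|)^2 = M\delta^2 + 2M\delta|x-y| + M|x-y|^2$, since the ``extra'' terms $M\delta|x-y|$ and $M|x-y|^2$ are nonnegative.

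There is no real obstacle here: the only reason the statement is true (rather than merely $|p(y)| \leq M\delta^2 + M\delta|x-y|$, which would be sharper) is that the lemma is formulated in a way that will be convenient in applications, where bounds of the form $(\delta + |x-y|)^2$ combine naturally with the $\varepsilon$-WSPD estimates of Lemma \ref{lem: wspd estimates}. The whole argument is two lines, the essential input being the linearity of $p$.
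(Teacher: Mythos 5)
Your proof is correct and is essentially identical to the paper's: both start from the exact first-order Taylor identity $p(y) = p(x) + \nabla p(x)\cdot(y-x)$, apply the triangle and Cauchy--Schwarz inequalities together with the hypotheses to get $M\delta^2 + M\delta|x-y|$, and then dominate this by $M(\delta + |x-y|)^2$. Your added remark about why the slack in the final step is deliberate (to mesh with the WSPD estimates) is a reasonable gloss, not a deviation.
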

\begin{proof}
Using Taylor's Theorem,
\begin{align*}
|p(y)| &= |p(x) + \nabla p(x) \cdot (y-x)| \\
&\leq |p(x)| + |\nabla p(x)| |x-y| \\
&\leq M\delta^2 + M\delta |x-y| \\
&\leq M(\delta + |x-y|)^2.
\end{align*}
\end{proof}

\begin{proof}[Proof of Theorem \ref{thm: efficient L(PE)}]
Set:
\begin{align*}
\widetilde{A}(P;E) &= \max_{\substack{a,b \in E \\ a \neq
    b}} \widetilde{A}(P;a,b), \\
B(P;E) &= \max_{\substack{a,b \in E \\ a \neq b}} B(P;a,b),
\end{align*}
and note that $\widetilde{\Gamma}^1 (P; E) = \max \{ \widetilde{A} (P;
E), \, B (P; E) \}$. 

Our algorithm works as follows. For now, let $0 < \varepsilon < 1$ be
arbitrary and invoke the algorithm from Theorem \ref{thm: F-K
  WSPD}. This gives us an $\varepsilon$-WSPD $(\T,\W)$ in
$C(\varepsilon, d) N\log N$ work and using
$C (\varepsilon, d) N$ storage. For each $(\Lambda_1,
\Lambda_2) \in \W$, pick a representative pair $(a_{\Lambda_1},
a_{\Lambda_2}) \in \cup \Lambda_1 \times \cup
\Lambda_2$. Additionally, for each $S \in \T$, pick a
representative $a_S \in S$.

We now compute the following:
\begin{align}
\widetilde{A}_1(P; \T, \W) &=  \max_{(\Lambda_1, \Lambda_2) \in \W}
  \widetilde{A} (P; a_{\Lambda_1}, a_{\Lambda_2}), \nonumber \\
\widetilde{A}_2(P; \T, \W) &= \max_{(\Lambda_1, \Lambda_2) \in \W}
\, \max_{i=1,2} \, \max_{S \in \Lambda_i} \widetilde{A}(P;
a_{\Lambda_i}, a_S), \label{eqn: Atilde wspd}\\
\widetilde{A}_3(P; \T, \W) &= \max_{S \in \T} \max_{a \in S}
\widetilde{A}(P; a, a_S), \nonumber
\end{align}
and
\begin{align*}
B_1(P; \T, \W) &= \max_{(\Lambda_1, \Lambda_2) \in \W} B(P;
a_{\Lambda_1}, a_{\Lambda_2}), \\
B_2(P; \T, \W) &= \max_{(\Lambda_1, \Lambda_2) \in \W}
\, \max_{i=1,2} \, \max_{S \in \Lambda_i} B(P;
a_{\Lambda_i}, a_S), \\
B_3(P; \T, \W) &= \max_{S \in \T} \max_{a \in S}
B(P; a, a_S).
\end{align*}
Additionally, compute:
\begin{align*}
\widetilde{A} (P; \T, \W) &= \max_{i=1, 2, 3} \widetilde{A}_i (P; \T,
\W), \\
B (P; \T, \W) &= \max_{i=1,2,3} B_i (P; \T, \W),
\end{align*}
as well as
\begin{equation} \label{eqn: approx gamma}
\widetilde{\Gamma}^1 (P; \T, \W) = \max \{ \widetilde{A} (P; \T, \W), \,
B (P; \T, \W) \}.
\end{equation}
Using properties \ref{item: F-K 6} and \ref{item: F-K 7} from Section
\ref{sec: wspd}, we see that computing
$\widetilde{\Gamma}^1(P;\T,\W)$ requires
$C(\varepsilon, d) N\log N$ work and
$C(\varepsilon, d) N$ storage.

Now we show that $\widetilde{\Gamma}^1(P;\T,\W)$ has the same order
of magnitude as $\widetilde{\Gamma}^1(P;E)$. Clearly,
$\widetilde{\Gamma}^1(P; \T, \W) \leq
\widetilde{\Gamma}^1(P;E)$. For the other inequality, we break
$\widetilde{\Gamma}^1 (P; E)$ into its two parts. Since
$\widetilde{\Gamma}^1(P;E) = \max\{\widetilde{A}(P;E), \, B(P;E)\}$,
it suffices to show that $\widetilde{A} (P; E)$ and $B (P; E)$ are
each bounded from above by $C(\varepsilon) \cdot \widetilde{\Gamma}^1 (P; \T,
\W)$. 

Set:
\begin{equation*}
\widetilde{\Gamma}_i^1 (P; \T, \W) = \max
\{ \widetilde{A}_i (P; \T, \W), B_i (P; \T, \W) \}, \quad i = 1, 2, 3,
\end{equation*}
and note $\widetilde{\Gamma}_i^1 (P; \T, \W) \leq \widetilde{\Gamma}^1
(P; \T, \W)$ for each $i = 1, 2, 3$. 

Let $a,b \in E$, $a \neq b$. By
properties \ref{item: F-K 1} and \ref{item: F-K 2} of Section
\ref{sec: wspd}, there is a unique pair $(\Lambda_1, \Lambda_2) \in
\W$ such that $(a,b) \in \cup \Lambda_1 \times \cup
\Lambda_2$. Additionally, by the definition of $(\T,\W)$, there exists
a set $S \in \Lambda_1$ such that $a \in S$ and a set $T \in \Lambda_2$ such
that $b \in T$.

We first bound $\widetilde{A}(P; E)$. Using the
triangle inequality, the definition of $\widetilde{A}_3(P; \T, \W)$, and
Lemma \ref{lem: wspd estimates},
\begin{align}
|P_a(a) - P_b(a)| &\leq |P_a(a) - P_{a_S}(a)| + |P_{a_S}(a) - P_b(a)|
\nonumber \\
&\leq \widetilde{A}_3(P; \T, \W) \cdot |a-a_S|^2 + |P_{a_S}(a) - P_b(a)| \nonumber \\
&\leq \varepsilon \cdot \widetilde{\Gamma}^1 (P; \T, \W) \cdot |a-b|^2 + |P_{a_S}(a) -
P_b(a)|. \label{eqn: A estimate 1}
\end{align}
We continue with the second term of the right hand side of \eqref{eqn:
  A estimate 1}. Using the triangle inequality, Lemma \ref{lem: polynomial shift}
applied to $P_{a_S} - P_{a_{\Lambda_1}}$ with $x = a_{\Lambda_1}$, $y =
a$ and $M = \widetilde{\Gamma}_2^1 (P; \T, \W)$, as well as Lemma
\ref{lem: wspd estimates}, we obtain:
\begin{align}
|P_{a_S}(a) - P_b(a)| &\leq |P_{a_S}(a) - P_{a_{\Lambda_1}}(a)| +
|P_{a_{\Lambda_1}}(a) - P_b(a)| \nonumber \\
&\leq \widetilde{\Gamma}^1_2 (P; \T, \W) \cdot (|a_S - a_{\Lambda_1}| + |a -
a_{\Lambda_1}|)^2 + |P_{a_{\Lambda_1}}(a) - P_b(a)| \nonumber \\
&\leq 4\varepsilon^2 \cdot \widetilde{\Gamma}^1 (P; \T, \W) \cdot
  |a-b|^2 + |P_{a_{\Lambda_1}}(a) - P_b(a)|. \label{eqn: A estimate 2}
\end{align}
To bound the second term of the right hand side of \eqref{eqn:
  A estimate 2}, we use the triangle inequality, Lemma \ref{lem:
  polynomial shift} applied to $P_b - P_{a_T}$ with $x = b$, $y = a$
and $M = \widetilde{\Gamma}_3^1 (P; \T, \W)$, and Lemma \ref{lem: wspd estimates}:
\begin{align}
|P_{a_{\Lambda_1}}(a) - P_b(a)| &\leq |P_b(a) - P_{a_T}(a)| +
|P_{a_T}(a) - P_{a_{\Lambda_1}}(a)| \nonumber \\
&\leq \widetilde{\Gamma}^1_3 (P; \T, \W) \cdot (|b-a_T| + |a-b|)^2 + |P_{a_T}(a) -
P_{a_{\Lambda_1}}(a)| \nonumber \\
&\leq (1+\varepsilon)^2 \cdot \widetilde{\Gamma}^1 (P; \T, \W) \cdot
  |a-b|^2 + |P_{a_T}(a) - P_{a_{\Lambda_1}}(a)|. \label{eqn: A estimate 3}
\end{align}
Finally, for the second term of the right hand side of \eqref{eqn:
  A estimate 3}, we use the triangle inequality, Lemma \ref{lem:
  polynomial shift} applied to $P_{a_T} - P_{a_{\Lambda_2}}$ with $x =
a_{\Lambda_2}$, $y = a$ and $M = \widetilde{\Gamma}_2^1(P; \T, \W)$,
Lemma \ref{lem: polynomial shift} applied to $P_{a_{\Lambda_2}} -
P_{a_{\Lambda_1}}$ with $x = a_{\Lambda_1}$, $y = a$ and $M =
\widetilde{\Gamma}_1^1(P; \T, \W)$, as well as Lemma \ref{lem: wspd estimates}:
\begin{align}
|P_{a_T}(a) &- P_{a_{\Lambda_1}(a)}(a)| \nonumber \\
&\leq |P_{a_T}(a) -
P_{a_{\Lambda_2}}(a)| + |P_{a_{\Lambda_2}}(a) - P_{a_{\Lambda_1}}(a)|
\nonumber \\
&\leq \widetilde{\Gamma}_2^1 (P; \T, \W) \cdot ( |a_T - a_{\Lambda_2}|
  + |a - a_{\Lambda_2}|)^2 +  |P_{a_{\Lambda_2}}(a) -
  P_{a_{\Lambda_1}}(a)| \nonumber \\
& \leq (1+3\varepsilon)^2 \cdot \widetilde{\Gamma}^1 (P; \T, \W)
  \cdot |a-b|^2 + \widetilde{\Gamma}^1_1 (P; \T, \W) \cdot (|a_{\Lambda_2}-a_{\Lambda_1}| +
|a-a_{\Lambda_1}|)^2 \nonumber \\
&\leq 2 (1 + 3\varepsilon)^2 \cdot \widetilde{\Gamma}^1 (P; \T, \W)
  \cdot |a-b|^2. \label{eqn: A estimate 4}
\end{align}
Putting \eqref{eqn: A estimate 1}, \eqref{eqn: A estimate 2},
\eqref{eqn: A estimate 3}, \eqref{eqn: A estimate 4} together, we get:
\begin{equation}\label{eqn: A estimate 5}
|P_a(a) - P_b(a)| \leq (3 + 23\varepsilon) \cdot \widetilde{\Gamma}^1
(P; \T, \W) \cdot |a-b|^2.
\end{equation}

The proof for $B (P; E)$ proceeds along similar lines. Using the same
sequence of triangle inequalities, in addition to several applications
of Lemma \ref{lem: wspd estimates}, yields:
\begin{align}
|D_af - D_bf| \leq{}& |D_af - D_{a_S}f| + |D_{a_S}f - D_{a_{\Lambda_1}}f|
  + |D_{a_{\Lambda_1}}f - D_{a_{\Lambda_2}}f| \nonumber \\
&+ |D_{a_{\Lambda_2}}f - D_{a_T}f| + |D_{a_T}f - D_bf| \nonumber \\
\leq{}& B_3 (P; \T, \W) \cdot |a - a_S| + B_2 (P; \T, \W) \cdot |a_S -
        a_{\Lambda_1}| \nonumber \\
&+ B_1 (P; \T, \W) \cdot |a_{\Lambda_1} - a_{\Lambda_2}| + B_2 (P; \T,
  \W) \cdot |a_{\Lambda_2} - a_T| \nonumber \\
&+ B_1 (P; \T, \W) \cdot |a_T - b| \nonumber \\
\leq{}& (1 + 6 \varepsilon) \cdot \widetilde{\Gamma}^1 (P; \T, \W)
        \cdot |a-b|. \label{eqn: B estimate}
\end{align}

Combining \eqref{eqn: A estimate 5} and \eqref{eqn: B estimate} gives:
\begin{equation*}
\widetilde{\Gamma}^1 (P; \T, \W) \leq \widetilde{\Gamma}^1 (P; E) \leq
(3 + 23\varepsilon) \cdot \widetilde{\Gamma}^1 (P; \T, \W).
\end{equation*}
Now apply Lemma \ref{lem: first gamma estimate} to obtain:
\begin{equation*}
\widetilde{\Gamma}^1 (P; \T, \W) \leq \Gamma^1 (P; E) \leq 2 (1 +
\sqrt{2}) (3 + 23\varepsilon) \cdot \widetilde{\Gamma}^1 (P; \T, \W).
\end{equation*}
The proof is completed by selecting any $\varepsilon \in (0, 1)$.
\end{proof}

\subsection{Function interpolation problem} \label{sec: function
  interpolation problem}

\subsubsection{Convex optimization}

For the \textsc{Function interpolation problem}, we have only the
function values $f: E \rightarrow \R$ and we must compute:
\begin{equation*}
\Gamma^1(f;E) = \inf \{ \Gamma^1(P;E) \mid P_a(a) = f(a) \text{ for
  all } a \in E\}.
\end{equation*}
Recall that $P_a(x) = f_a + D_af \cdot (x-a)$. Since we must have
$P_a(a) = f_a = f(a)$, the values $\{f_a\}_{a \in E}$ are
fixed. Additionally, the set $E$ is fixed. Therefore, we must solve
for the gradients $\{D_af\}_{a \in E}$ that minimize
$\Gamma^1(P;E)$. Thus in this section we view $\Gamma^1(P;E)$ as
a function of the gradients. In order to clarify this point, let $E =
\{a_k\}_{k=1}^N$ be an indexation of $E$ and define a new variable $Y
= (y_1,\ldots,y_N) \in \R^{dN}$ with $y_k \in \R^d$ for each
$k=1,\ldots,N$. Let $g: \R^{dN} \rightarrow \R$ be defined as:
\begin{equation*}
g(Y) = \Gamma^1(P;E), \quad P_{a_k}(x) = f(a_k) + y_k \cdot (x-a_k).
\end{equation*}

The function $g: \R^{dN} \rightarrow \R$ is a convex
function and since $\Gamma^1(P;E) > 0$ it is piecewise twice
differentiable. Therefore we can use algorithms from convex
optimization to solve for $\Gamma^1(f;E)$. Indeed, consider the
following unconstrained convex optimization problem:
\begin{equation} \label{eqn: uncon conv opt gamma}
\text{minimize} \quad g(Y).
\end{equation}
The value of \eqref{eqn: uncon conv opt gamma} is
$\Gamma^1(f;E)$. Additionally, if $Y^{\star} =
(y_1^{\star},\ldots,y_N^{\star})$ is the minimizer, then the $1$-field
$P^{\star}$ defined by
\begin{equation*}
P_{a_k}^{\star}(x) = f(a_k) + y_k^{\star} \cdot (x-a_k),
\end{equation*}
achieves the value $\Gamma^1(f;E)$. 
One can solve \eqref{eqn: uncon conv opt gamma} using Newton's
method. A rigorous estimate for the number of iterations (in
particular the number of Newton steps) is difficult to compute due to
the square root in the $\Gamma^1$ functional, but we can examine a
related convex optimization problem to get a related estimate.

Recall the functional $\widetilde{\Gamma}^1(P;E)$, which has
the same order of magnitude as $\Gamma^1$, and was defined as:
\begin{equation*}
\widetilde{\Gamma}^1(P;E) = \max_{\substack{a,b, \in E \\ a \neq b}}
\left\{ \widetilde{A}(P;a,b), B(P;a,b) \right\}.
\end{equation*}
Define $\widetilde{\Gamma}^1(f;E)$ analogously to $\Gamma^1(f;E)$, 
\begin{equation*}
\widetilde{\Gamma}^1(f;E) = \inf \{ \widetilde{\Gamma}^1(P;E) \mid
P_a(a) = f(a) \text{ for all } a \in E \},
\end{equation*}
and similarly define $\tilde{g}: \R^{dN} \rightarrow \R$ analogously
to $g$. Then the following unconstrained convex optimization problem solves for
$\widetilde{\Gamma}^1(f;E)$: 
\begin{equation} \label{eqn: uco for gamma tilde}
\text{minimize} \quad \tilde{g}(Y).
\end{equation}
We can rewrite \eqref{eqn: uco for gamma tilde} in a form that is more
easily accessible and that utilizes only continuous, twice
differentiable functions (as opposed to $\tilde{g}$ which is piecewise
such). Related to the functional $\widetilde{A}$, define two families
of functions $\alpha_{j,k}^+: \R^{dN+1} \rightarrow \R$ and
$\alpha_{j,k}^-: \R^{dN+1} \rightarrow \R$,
\begin{align*}
\alpha_{j,k}^+(Y,M) &= y_k \cdot (a_k-a_j) - M|a_j-a_k|^2 + f(a_j) -
f(a_k), \quad j,k = 1,\ldots,N, \\
\alpha_{j,k}^-(Y,M) &= y_k \cdot (a_j-a_k) - M|a_j-a_k|^2 + f(a_k) -
f(a_j), \quad j,k = 1,\ldots,N,
\end{align*}
where $M \in \R$. Additionally, for the functional $B$ define $\beta_{j,k}:
\R^{dN+1} \rightarrow \R$,
\begin{equation*}
\beta_{j,k}(Y,M) = |y_j|^2 + |y_k|^2 - 2y_j \cdot y_k -
M^2|a_j-a_k|^2, \quad j,k = 1,\ldots,N.
\end{equation*}
Then the following optimization problem is equivalent to \eqref{eqn:
  uco for gamma tilde}:
\begin{align}
\text{minimize}& \quad M, \label{eqn: QP for gamma tilde} \\
\text{subject to}& \quad \alpha_{j,k}^+(Y,M) \leq 0, \quad \forall
\, j,k = 1,\ldots,N, \nonumber \\
& \quad \alpha_{j,k}^-(Y,M) \leq 0, \quad \forall \, j,k =
1,\ldots,N, \nonumber \\
& \quad \beta_{j,k}(Y,M) \leq 0, \quad \forall \, j,k =
1,\ldots,N. \nonumber
\end{align}
Indeed, if $(Y^{\star},M^{\star})$ is the minimizer, then
$\widetilde{\Gamma}^1(f;E) = M^{\star}$ and $Y^{\star}$ defines the
gradients of the $1$-field $P^{\star}$ such that
$\widetilde{\Gamma}^1(P^{\star}; E) = M^{\star}$. 

Constrained convex optimization problems can be solved using interior
point methods. In Appendix \ref{sec: convex optimization} we describe
a particular form of the barrier method that iteratively solves a
sequence of unconstrained optimization problems with Newton's
method. Each iteration is referred to as a Newton step.

The $\alpha$ functions are linear and the $\beta$ functions are
quadratic; therefore, \eqref{eqn: QP for gamma tilde} is a
quadratically constrained quadratic program (QCQP). Thus Theorem
\ref{thm: newton step bound} from Appendix \ref{sec: convex
  optimization} applies and we see that the number of Newton steps
required for the barrier method to solve \eqref{eqn: QP for gamma
  tilde} to within (additive) accuracy $\epsilon$ is $O(N\log (N/\epsilon))$. 

The cost of each Newton step
can be derived from equations \eqref{eqn: cost per Newton step} and
\eqref{eqn: Newton step H cost for barrier} (also in Appendix
\ref{sec: convex optimization}). Without considering
any structure in the problem, the amount of work is $O(N^4)$ due to
the cost of forming the relevant Hessian matrix $H$. However,
each $\alpha$ and $\beta$ function depends only on $2d+1$ variables;
therefore the cost of forming $H$ is in fact $O(N^2)$. In this
case the Cholesky factorization for computing $H^{-1}$ will dominate
with $O(N^3)$ work per Newton step. Thus the total work is $O(N^4 \log
(N/\epsilon))$. We collect this result in the following proposition.

\begin{proposition}
There is an algorithm, for which the inputs are the set $E \subset
\R^d$, the function $f: E \rightarrow \R$ and a parameter $\epsilon >
0$, that computes a value $\widetilde{M}$ satisfying:
\begin{equation*}
\widetilde{M} - \epsilon \leq \widetilde{\Gamma}^1(f; E) \leq \widetilde{M} + \epsilon,
\end{equation*}
as well as a $1$-field $P$ such that $\Gamma^1(P; E) = \widetilde{M}$ and $P_a(a)
= f(a)$ for all $a \in E$. The algorithm requires $O (N^4 \log (N / \epsilon))$ work.
\end{proposition}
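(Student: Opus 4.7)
The proof essentially amounts to assembling the pieces already laid out in the preceding discussion, so my plan is to make the bookkeeping explicit. First I would record that the quadratically constrained quadratic program \eqref{eqn: QP for gamma tilde} is equivalent to minimizing $\widetilde{\Gamma}^1(P;E)$ over $1$-fields $P$ satisfying $P_a(a)=f(a)$: any feasible $(Y,M)$ encodes a $1$-field $P$ with $P_{a_k}(x)=f(a_k)+y_k\cdot(x-a_k)$ for which the $\alpha_{j,k}^\pm$ constraints are exactly $\widetilde{A}(P;a_j,a_k)\le M$ and the $\beta_{j,k}$ constraints are exactly $B(P;a_j,a_k)\le M$. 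Hence the optimal $M^\star$ is $\widetilde{\Gamma}^1(f;E)$ and the optimal $Y^\star$ produces the desired $1$-field. This reduces the problem to solving a QCQP with $O(N^2)$ constraints in $dN+1$ variables.

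The plan is then to invoke the barrier method from Appendix~\ref{sec: convex optimization}. By Theorem~\ref{thm: newton step bound} applied to a QCQP with $m=O(N^2)$ constraints, the number of Newton steps required to reach additive accuracy $\epsilon$ is $O(\sqrt{m}\log(m/\epsilon))=O(N\log(N/\epsilon))$. The output $(\widetilde Y,\widetilde M)$ of the method is feasible, so $\widetilde M\ge \widetilde\Gamma^1(f;E)$, and the duality-gap guarantee bounds the suboptimality in the $M$-component by $\epsilon$, yielding the two-sided estimate $\widetilde M-\epsilon\le\widetilde\Gamma^1(f;E)\le \widetilde M+\epsilon$. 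Setting $\widetilde M$ as the reported value and defining $P$ from $\widetilde Y$ gives both outputs claimed by the proposition.

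The remaining task is the per-iteration cost analysis, which is where I expect the main obstacle to lie, although it is still routine. Using the formulas \eqref{eqn: cost per Newton step} and \eqref{eqn: Newton step H cost for barrier}, I would bound the cost of assembling the Hessian $H$ of the barrier-augmented objective by exploiting sparsity: each $\alpha_{j,k}^\pm$ and each $\beta_{j,k}$ depends only on the $2d+1$ variables $y_j,y_k,M$, so every constraint contributes $O(1)$ nonzero Hessian blocks (with $d$ absorbed into the implied constant), and summation over the $O(N^2)$ constraints yields $O(N^2)$ work to form $H$. Solving the resulting $(dN+1)\times(dN+1)$ Newton system by Cholesky factorization costs $O(N^3)$ and dominates. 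Multiplying by the $O(N\log(N/\epsilon))$ Newton steps gives the claimed total of $O(N^4\log(N/\epsilon))$ work. The delicate point that deserves care is verifying that the barrier formulation's additive duality gap is transferred cleanly to an additive bound on the primal variable $M$ itself, rather than merely on the full augmented objective; this follows from the particular form of the objective (linear in $M$) and the standard analysis in Appendix~\ref{sec: convex optimization}.
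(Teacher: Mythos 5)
Your proposal is correct and follows the same route as the paper: reformulate the minimization of $\widetilde{\Gamma}^1$ as the QCQP \eqref{eqn: QP for gamma tilde}, apply the barrier method with $m=O(N^2)$ constraints via Theorem~\ref{thm: newton step bound} to get $O(N\log(N/\epsilon))$ Newton steps, and use the sparsity of the $\alpha_{j,k}^{\pm}$ and $\beta_{j,k}$ constraints to bound Hessian assembly by $O(N^2)$ so that the $O(N^3)$ Cholesky factorization dominates each step. The bookkeeping and the final $O(N^4\log(N/\epsilon))$ bound match the paper's reasoning.
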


As a corollary we have:

\begin{corollary}
There is an algorithm, for which the inputs are the set $E \subset
\R^d$, the function $f: E \rightarrow \R$ and a parameter $\epsilon >
0$, that computes a value $M$ satisfying:
\begin{equation*}
M/C - 2\epsilon \leq \Gamma^1(f; E) \leq M,
\end{equation*}
where $C = 2(1+\sqrt{2})$. The algorithm also outputs a $1$-field $P$
such that $\Gamma^1(P; E) = M$ and $P_a(a) = f(a)$ for all $a \in
E$. The work required is $O (N^4 \log (N / \epsilon))$.
\end{corollary}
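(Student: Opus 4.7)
The plan is to reduce this corollary to the preceding proposition by using Lemma \ref{lem: first gamma estimate} to convert the approximation guarantee for $\widetilde{\Gamma}^1$ into one for $\Gamma^1$. First, I would run the algorithm of the preceding proposition on inputs $E$, $f$, and $\epsilon$ to obtain a scalar $\widetilde{M}$ and a $1$-field $P$ satisfying $P_a(a) = f(a)$ for all $a \in E$, $\widetilde{\Gamma}^1(P; E) = \widetilde{M}$, and the additive approximation $\widetilde{M} - \epsilon \leq \widetilde{\Gamma}^1(f; E) \leq \widetilde{M} + \epsilon$. This step consumes the stated $O(N^4 \log(N/\epsilon))$ work.

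Second, I would use the closed form \eqref{eqn: Gamma1 alternate} to compute $M := \Gamma^1(P; E)$ directly from $P$. This takes only $O(d N^2)$ work and is absorbed by the previous stage, so the total work remains $O(N^4 \log(N/\epsilon))$. By construction, the output pair $(M, P)$ satisfies the required relations $\Gamma^1(P; E) = M$ and $P_a(a) = f(a)$.

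Third, I would verify the two-sided bound. The upper bound $\Gamma^1(f; E) \leq M$ is immediate from the definition of $\Gamma^1(f; E)$ as an infimum over $1$-fields interpolating $f$, since $P$ is one such candidate. For the lower bound, I would chain three estimates: Lemma \ref{lem: first gamma estimate} applied to $P$ yields $M = \Gamma^1(P; E) \leq 2(1+\sqrt{2}) \widetilde{\Gamma}^1(P; E) = C \widetilde{M}$; the proposition gives $\widetilde{M} \leq \widetilde{\Gamma}^1(f; E) + \epsilon$; and taking infima over interpolating $1$-fields in the inequality $\widetilde{\Gamma}^1(Q; E) \leq \Gamma^1(Q; E)$ yields $\widetilde{\Gamma}^1(f; E) \leq \Gamma^1(f; E)$. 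Composing these three gives $M/C \leq \Gamma^1(f; E) + \epsilon$, which implies the desired $M/C - 2\epsilon \leq \Gamma^1(f; E)$ (in fact, a slightly stronger bound with a single $\epsilon$).

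There is no real obstacle here beyond careful bookkeeping: the entire argument is a short chain of inequalities, and the work bound is inherited directly from the proposition. The only point requiring attention is to recognize that the $1$-field returned by the QCQP \eqref{eqn: QP for gamma tilde} minimizes $\widetilde{\Gamma}^1$ rather than $\Gamma^1$, so that Lemma \ref{lem: first gamma estimate} is the correct bridge between the two functionals.
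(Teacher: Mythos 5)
Your proof is correct and is essentially the argument the paper leaves implicit (no proof is printed for this corollary). You correctly read the preceding proposition as delivering a feasible $1$-field $P$ for the QCQP \eqref{eqn: QP for gamma tilde} — so that $\widetilde{\Gamma}^1(P;E) \leq \widetilde{M}$ — and then set $M := \Gamma^1(P;E)$, which yields $\Gamma^1(P;E)=M$ by fiat, the upper bound $\Gamma^1(f;E)\le M$ from the definition of the infimum, and the lower bound by chaining Lemma \ref{lem: first gamma estimate}, the additive $\epsilon$ guarantee of the barrier method, and $\widetilde{\Gamma}^1 \leq \Gamma^1$. (Indeed only the one-sided bound $\widetilde{\Gamma}^1(P;E) \le \widetilde{M}$ is used, so your chain goes through even under the proposition's literal statement $\Gamma^1(P;E)=\widetilde M$.) The extra $O(dN^2)$ cost of evaluating $\Gamma^1(P;E)$ via \eqref{eqn: Gamma1 alternate} is absorbed by the $O(N^4\log(N/\epsilon))$ work of the barrier method, and your derivation actually yields the slightly stronger bound $M/C-\epsilon\le\Gamma^1(f;E)$, which of course implies the claimed $M/C-2\epsilon\le\Gamma^1(f;E)$.
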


\subsubsection{Convex optimization + $\varepsilon$-WSPD} \label{sec:
  convex opt + wspd}

As in the \textsc{Jet interpolation problem}, one can reduce the
amount of work by utilizing the $\varepsilon$-WSPD and computing a
value $\widetilde{M}$ that is within a multiplicative constant of
$\widetilde{\Gamma}^1(f;E)$. 

Recall from the proof of Theorem \ref{thm: efficient L(PE)} the
constant $\widetilde{\Gamma}^1(P; \T, \W)$ defined in \eqref{eqn:
  approx gamma}, which has the same order of magnitude as $\Gamma^1(P;E)$. It 
is computed over pairs of points $(E \times E)_{\T, \W} \subset E \times E$ derived from the
$\varepsilon$-WSPD $(\T, \W)$ of Section \ref{sec: wspd}; recall these
pairs are (see \eqref{eqn: Atilde wspd}):
\begin{align*}
(E \times E)_{\T, \W} ={} &\{ (a_{\Lambda_1}, a_{\Lambda_2}) \mid
(\Lambda_1, \Lambda_2) \in \W \} \, \cup \\
&\{ (a_{\Lambda_i}, a_S) \mid (\Lambda_1, \Lambda_2) \in \W, \, i =
  1,2, \, S \in \Lambda_i \} \, \cup \\
&\{ (a, a_S) \mid S \in \T, \, a \in S \}.
\end{align*}
Let $(\Z_N \times \Z_N)_{\T,\W} \subset \Z_N \times \Z_N$ denote the indices of
these pairs. Consider the following optimization problem:

\begin{align}
\text{minimize}& \quad M, \label{eqn: QP for gamma tilde wspd} \\
\text{subject to}& \quad \alpha_{j,k}^+(Y,M) \leq 0, \quad \forall
\, (j,k) \in (\Z_N \times \Z_N)_{\T,\W}, \nonumber \\
& \quad \alpha_{j,k}^-(Y,M) \leq 0, \quad \forall \, (j,k) \in
(\Z_N \times \Z_N)_{\T,\W}, \nonumber \\
& \quad \beta_{j,k}(Y,M) \leq 0, \quad \forall \, (j,k) \in (\Z_N
\times \Z_N)_{\T,\W}. \nonumber
\end{align}
The minimizer of \eqref{eqn: QP for gamma tilde wspd} is
$\widetilde{\Gamma}^1(f; \T, \W)$, which is defined as:
\begin{equation*}
\widetilde{\Gamma}^1(f; \T, \W) = \inf \{ \widetilde{\Gamma}^1(P; \T,
\W) \mid P_a(a) = f(a) \text{ for all } a \in E \}.
\end{equation*}
By Theorem \ref{thm: efficient L(PE)} it has the same order of magnitude
as $\Gamma^1(f;E)$. Additionally, by construction of the
$\varepsilon$-WSPD $(\T,\W)$, $(\Z_N \times \Z_N)_{\T,\W}$ has only
$O(N\log N)$ pairs of points. Thus Theorem \ref{thm: newton step
  bound} implies the number of Newton steps required for the barrier
method is $O((N\log N)^{1/2}\log(N/\epsilon))$.

The cost per Newton step is harder to bound rigorously. The amount of work to
form the relevant Hessian matrix $H$ is $O(N \log N)$. Furthermore, the Hessian
matrix is sparse, with only $O(N \log N)$ nonzero entries. Thus, for the Cholesky
factorization, we can use a sparse factorization
algorithm. However, an exact bound on the work required depends
on the sparsity pattern. In fact we know the pattern, since it can be derived
from the $\varepsilon$-WSPD, however, to the best of our knowledge
there is no theorem relating well separated pair decompositions and
sparse Cholesky factorization algorithms. For sparse matrices
corresponding to planar graphs, the storage is $O(N\log N)$ and the work is
$O(N^{3/2})$ for the Cholesky factorization
\cite{lipton:generalNestedDissection1979}; one might hope a similar
theorem could be proved for the Hessian matrix derived from the
$\varepsilon$-WSPD. As it stands currently, we are guaranteed no more
than $O(N^{7/2} (\log N)^{1/2} \log (N/\epsilon))$ work:

\begin{proposition}
There is an algorithm, for which the inputs are the set $E \subset
\R^d$, the function $f: E \rightarrow \R$ and a parameter $\epsilon >
0$, that computes a value $M$ satisfying:
\begin{equation*}
M/C_0 - 2\epsilon \leq \Gamma^1(f; E) \leq M,
\end{equation*}
where $C_0$ is the same absolute constant of Theorem \ref{thm:
  efficient L(PE)}. The algorithm also outputs $1$-field $P$ such
that $\Gamma^1(P; E) = M$ and $P_a(a) = f(a)$ for all $a \in E$. The
work required is $O(N^{7/2} (\log N)^{1/2} \log (N/\epsilon))$.
\end{proposition}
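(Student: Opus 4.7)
The plan is to apply the barrier method of Appendix \ref{sec: convex optimization} to the QCQP \eqref{eqn: QP for gamma tilde wspd}, whose constraints are indexed by the $O(N \log N)$ pairs $(\Z_N \times \Z_N)_{\T, \W}$ coming from an $\varepsilon$-WSPD, rather than by all of $E \times E$ as in the previous corollary. First, invoke Theorem \ref{thm: F-K WSPD} with a fixed $\varepsilon \in (0,1)$ (say $\varepsilon = 1/2$) to build $(\T, \W)$ in $O(N \log N)$ work, together with the representative points $a_{\Lambda_i}$ and $a_S$ used in the proof of Theorem \ref{thm: efficient L(PE)}; this also fixes the absolute constant $C_0 = 2(1 + \sqrt{2})(3 + 23\varepsilon)$. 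With these pairs in hand, write down \eqref{eqn: QP for gamma tilde wspd} and pass it to the barrier method.

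For the complexity, the reduced QCQP has $m = O(N \log N)$ quadratic constraints, so Theorem \ref{thm: newton step bound} gives $O(\sqrt{m}\, \log(m/\epsilon)) = O((N \log N)^{1/2} \log(N/\epsilon))$ Newton steps. Each of the functions $\alpha_{j,k}^{\pm}$, $\beta_{j,k}$ depends on at most $2d+1$ of the $dN + 1$ decision variables, so the Hessian $H$ has $O(N \log N)$ nonzero entries and is assembled in $O(N \log N)$ work per step. Solving the Newton system $Hu = -g$ is the bottleneck: in the absence of a nested-dissection-style bound for the WSPD-induced sparsity pattern, we fall back to dense Cholesky on a matrix of order $dN + 1$, costing $O(N^3)$ per step. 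Multiplying through yields the claimed total $O(N^{7/2} (\log N)^{1/2} \log(N/\epsilon))$. This dense-Cholesky fallback is the main obstacle; if a WSPD analogue of \cite{lipton:generalNestedDissection1979} could be proved, the exponent would drop closer to $N^{3/2}$.

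For the accuracy guarantee, let $(Y^{\star}, M^{\star})$ be the approximate minimizer returned by the barrier method and let $P^{\star}$ be the $1$-field defined by $P_{a_k}^{\star}(x) = f(a_k) + y_k^{\star} \cdot (x - a_k)$, so that $P_a^{\star}(a) = f(a)$ for every $a \in E$. Set $M = \Gamma^1(P^{\star}; E)$, which is computable from \eqref{eqn: Gamma1 alternate} in $O(N^2)$ extra work, negligible against the dominant term. The upper bound $\Gamma^1(f; E) \leq M$ is immediate since $P^{\star}$ interpolates $f$ on $E$. For the reverse direction, feasibility of $(Y^{\star}, M^{\star})$ forces $\widetilde{\Gamma}^1(P^{\star}; \T, \W) \leq M^{\star}$, while Theorem \ref{thm: efficient L(PE)} applied to $P^{\star}$ gives $M = \Gamma^1(P^{\star}; E) \leq C_0\, \widetilde{\Gamma}^1(P^{\star}; \T, \W) \leq C_0 M^{\star}$. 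Combining with the barrier guarantee $M^{\star} \leq \widetilde{\Gamma}^1(f; \T, \W) + \epsilon$ and with the monotonicities $\widetilde{\Gamma}^1(f; \T, \W) \leq \widetilde{\Gamma}^1(f; E) \leq \Gamma^1(f; E)$ yields $M \leq C_0\, \Gamma^1(f; E) + C_0 \epsilon$; rescaling the barrier tolerance by the absolute constant $C_0$ (which only adds a $\log C_0$ factor absorbed into the $O(\cdot)$) converts this into the stated bound $M/C_0 - 2\epsilon \leq \Gamma^1(f; E)$.
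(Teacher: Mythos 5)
Your proposal is correct and follows essentially the same route as the paper: restrict the QCQP constraints to the $O(N\log N)$ WSPD pairs, invoke Theorem \ref{thm: newton step bound} for the Newton-step count, and fall back on dense Cholesky at $O(N^3)$ per step for the total $O(N^{7/2}(\log N)^{1/2}\log(N/\epsilon))$ work. Your accuracy chain $M = \Gamma^1(P^\star;E) \le C_0\,\widetilde{\Gamma}^1(P^\star;\T,\W) \le C_0 M^\star \le C_0(\widetilde{\Gamma}^1(f;\T,\W)+\epsilon) \le C_0\Gamma^1(f;E)+C_0\epsilon$ in fact gives the slightly stronger $M/C_0 - \epsilon \le \Gamma^1(f;E)$, so the final ``rescaling of the tolerance'' step is unnecessary (but harmless).
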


\section{One time work, part II} \label{sec: one time work}

At this point in the algorithm we have computed a value $M =
\Lip(\nabla F)$ such that $M = \Gamma^1$ (either the jet or function
version) or $M$ has the same order of magnitude as
$\Gamma^1$. Additionally  we have a 1-field $P: E \rightarrow \PP$
regardless of whether we started with the \textsc{Jet interpolation
  problem} or the \textsc{Function interpolation problem}.

Let
\begin{equation*}
E = \{ a_k \mid k = 1, \ldots, N \},
\end{equation*}
be an indexation of the set $E$. The first step is to compute the shifted points:
\begin{equation*}
\widetilde{E} = \{ \tilde{a}_k = a_k - D_{a_k}f / M \mid k = 1,
\ldots, N\}.
\end{equation*}
Clearly this requires $O(N)$ work and $O(N)$ storage.

\subsection{Computing $\K_{\ast}$ and $\widehat{\K}$} \label{sec:
  compute Kast and Khat}

With $\widetilde{E}$ in hand, we compute the power diagram
$\K_{\ast} = \PD(\widetilde{E})$ and the dual triangulation
$\widehat{\K} = \DT(\widetilde{E})$. We employ the lifting procedure
via the map $\lambda: \widetilde{E} \rightarrow \R^{d+1}$ described in
Section \ref{sec: pd dt ch}. Once
the points have been lifted, we compute the convex hull of
$\lambda(\widetilde{E})$. To determine the lower hull, we compute a
normal vector for each $d$-dimensional facet of the convex hull, and
orient them so that they are pointing inward. The inward pointing
normals determine the facets of the lower hull (namely, if the
$x_{d+1}$ coordinate of the normal is positive, then the facet is on
the lower hull). We then orthogonally project the facets of the lower
hull onto $\R^d$, under the map $(x_1,\ldots,x_d,x_{d+1}) \mapsto
(x_d,\ldots,x_d)$. This gives us the triangulation $\widehat{\K} =
\DT(\widetilde{E})$. It is initially stored in a data a structure
which lists the $d$-dimensional faces, which are simplices. Each
simplex is uniquely determined by storing the $d+1$ indices $\{ k_i
\}_{i=1}^{d+1}$ which correspond to the vertices $\{ \tilde{a}_{k_i}
\}_{i=1}^{d+1}$ of the simplex. The work is $O(N \log N + N^{\lceil
  d/2 \rceil})$ and the storage is $O(N^{\lceil d/2 \rceil})$. 

We make a pass through the simplices of the triangulation and store,
for each simplex, the $(d-1)$-dimensional facets contained in that simplex, which
we refer to as its children. Proceeding in a top-down fashion, we
store for each $j$-dimensional face $\widehat{S} \in \widehat{\K} = \DT
(\widetilde{E})$ ($0 < j < d$), both its children,
which are the $(j-1)$-dimensional faces of $\DT (\widetilde{E})$
contained in $\widehat{S}$, and its parents, which are the
$(j+1)$-dimensional faces of $\DT (\widetilde{E})$ that contain
$\widehat{S}$. For the vertices $\widetilde{E}$, we store only the
parents of each vertex, since they have no children. Each
$j$-dimensional face ($0 < j \leq d$) has exactly $j+1$
children. Additionally, the number of parents per face is no more than
$N$. Since there are $O(N^{\lceil d/2 \rceil})$ faces, the work and
storage for this step is $O(N^{\lceil d/2 \rceil +1})$.

Via duality, we derive the power diagram $\K_{\ast} =
\PD(\widetilde{E})$ from the triangulation $\widehat{\K} = \DT
(\widetilde{E})$. We first compute the power center of each
$d$-dimensional simplex of the triangulation. This is the point that
is equidistant from each vertex of the simplex with respect to the
power function of that vertex. Consider a
$d$-dimensional simplex with vertices
$\{\tilde{a}_{k_i}\}_{i=1}^{d+1}$. The power center is found by
solving for the $\rho \in \R^d$ satisfying:
\begin{equation*}
\pow (\rho,\tilde{a}_{k_1}) = \pow (\rho,\tilde{a}_{k_i}), \quad i =
2,\ldots d+1.
\end{equation*}
This system of equations is in fact linear, and can be rewritten as:
\begin{equation} \label{eqn: power center linear system}
2 (\tilde{a}_{k_i} - \tilde{a}_{k_1}) \cdot \rho = w(\tilde{a}_{k_1}) -
w(\tilde{a}_{k_i}) + |\tilde{a}_{k_i}|^2 - |\tilde{a}_{k_1}|^2, \quad
i = 2, \ldots, d+1.
\end{equation}
Equation \eqref{eqn: power center linear system} is a system of $d$
linear equations and $d$ unknowns; thus we
have a unique solution for $\rho$ so long as the simplex is not
degenerate, which is the power center. We make a pass over the
$d$-dimensional simplices of $\widehat{\K} = \DT(\widetilde{E})$ and
store the list of corresponding power centers $\{ \rho_{\ell} \}_{\ell
  \geq 1}$. The additional work and storage is $O(N^{\lceil d/2 \rceil})$, which
is the number of $d$-dimensional simplices of $\DT (\widetilde{E})$.

As discussed in Section \ref{sec: pd dt ch},
the power centers are vertices of the power diagram. The remainder of
the power diagram is determined using the duality between $\PD
(\widetilde{E})$ and $\DT (\widetilde{E})$. Edges ($1$-dimensional
faces) are determined using the $(d-1)$-dimensional faces of
$\widehat{\K} = \DT (\widetilde{E})$. Each $(d-1)$-dimensional facet of
$\DT (\widetilde{E})$ has one or two parents, which are
$d$-dimensional simplices. Those facets with two parents are dual to an
edge in $\K_{\ast} = \PD (\widetilde{E})$, which runs between the two
power centers corresponding to the two parent simplices. A facet $\widehat{S}$
of $\DT (\widetilde{E})$ with only one parent lies on the exterior of $\DT
(\widetilde{E})$; the edge of $\PD (\widetilde{E})$ dual to this facet
lies on an unbounded power cell, and thus has infinite length. It
originates at the power center corresponding to the single parent
simplex of $\widehat{S}$, and its direction (computed using $C(d)$
work) is perpendicular to $\widehat{S}$. The edge is stored by
generating a synthetic point along it, and storing this point in
addition to the power center at which the edge originates. These
synthetic points are marked as such, so they are distinguishable from
the power centers. The additional storage and cost for computing the
edges of $\PD (\widetilde{E})$ is $O(N^{\lceil d/2 \rceil})$. 

Higher dimensional faces of $\K_{\ast} = \PD (\widetilde{E})$ are
computed recursively. Suppose the vertices of all $j$-dimensional
faces of $\PD (\widetilde{E})$ have been stored, for some $j \geq
1$. Let $S_{\ast} \in \PD (\widetilde{E})$ be a $(j+1)$-dimensional
face, and let $\widehat{S} \in \DT (\widetilde{E})$ be its
corresponding dual face in the dual triangulation. The parents of
$\widehat{S}$ (stored earlier) correspond to the $j$-dimensional faces of $\PD
(\widetilde{E})$ contained in $S_{\ast}$. The vertices for these faces
have already been computed and stored; their union is the set of vertices of
$S_{\ast}$. Since the number of $j$-dimensional faces of $\PD
(\widetilde{E})$ is $O(N^{\lceil d/2 \rceil})$ for each $0 \leq j <
d$, the work and storage for this step is $O(N^{d+1})$. 

\subsection{Computing the $S_C$ points}

With $\K_{\ast} = \PD (\widetilde{E})$ and $\widehat{\K} = \DT
(\widetilde{E})$ computed, the remainder of the
one-time work is devoted to computing the $S_C$ points and the final
cells $\{T_S\}_{S \in \K}$. We begin with the former.

Let $\widehat{S} \in \widehat{\K}$ be a $j$-dimensional face of the
triangulation, and let $S_{\ast} \in \K_{\ast}$ be its dual
$(d-j)$-dimensional face in the power diagram. Recall that $S_H$ is an
affine space containing $\widehat{S}$, and note that $S_E$ is also an
affine space (this follows from \cite[Lemma 1,
p. 142]{wells:diffFuncLipDeriv1973}), which contains $S_{\ast}$. These
two spaces are orthogonal, and $S_C$ is the single point
of intersection, i.e., $S_C = S_H \cap S_E$. 

When $j=0$ or $j=d$, finding $S_C$ is simple. When $j=0$, $\widehat{S}
= \tilde{a} \in \widetilde{E}$ for some $a \in E$, and $S_C =
\tilde{a}$. Similarly, when $j = d$, $S_{\ast}$ is a power center, and
$S_C$ is this power center. 

When $0 < j < d$, let $\{\tilde{a}_{k_i}\}_{i=1}^{j+1}$ be the
vertices of the simplex $\widehat{S}$. Compute a set of vectors
$\{v_i\}_{i=1}^j$, where $v_i = \tilde{a}_{k_1} -
\tilde{a}_{k_{i+1}}$. Similarly, let $\{\rho_{\ell_k}\}_{k \geq 1}$ be the vertices of
$S_{\ast}$, which are power centers (note the number can vary and
depends on $S_{\ast}$, but is bounded by $O(N^{\lceil d/2
  \rceil})$). Select $d-j$ power centers
$\{\rho_{\ell_{k_i}}\}_{i=1}^{d-j}$ from amongst $\{ \rho_{\ell_k} \}_{k
  \geq 2}$ such that $\{ w_i \}_{i=1}^{d-j}$, $w_i = \rho_{\ell_1} -
\rho_{\ell_{k_i}}$ are linearly independent. $S_C$ is the unique point
for which there exists $\{\gamma_i\}_{i=1}^j \subset \R$ and
$\{\beta_i\}_{i=1}^{d-j} \subset \R$ such that
\begin{equation*}
S_C = \tilde{a}_{k_1} + \sum_{i=1}^j \gamma_i v_i = \rho_{\ell_1} +
\sum_{i=1}^{d-j} \beta_i w_i,
\end{equation*}
The work for solving for $\{\gamma_i\}_{i=1}^j$ and
$\{\beta_i\}_{i=1}^{d-j}$ depends only on $d$. Obtaining the $d-j$
linearly independent vectors $\{ w_i \}_{i=1}^{d-j}$ costs at most
$O(N^{\lceil d/2 \rceil})$ per $S \in \K$, and thus the total work is
bounded by $O(N^{d+1})$. 

With $S_C$ computed, we also compute $d_S(S_C)$ and store it away for
use in the query work. The cost is $O(N^{\lceil d/2 \rceil})$. The
additional storage needed throughout is no more than $O(N^{\lceil d/2
  \rceil})$.

\subsection{Computing the $T_S$ cells} \label{sec: computing TS cells}

To compute the cells $\{T_S\}_{S \in \K}$, we find matrices $A_S$
and vectors $b_S$ such that
\begin{equation*}
x \in T_S \quad \text{if and only if} \quad A_S x \leq b_S.
\end{equation*}
The pair $(A_S,b_S)$ determines the bounding hyperplanes of
$T_S$. These hyperplanes are determined by $\widehat{S} \in
\widehat{\K} = \DT (\widetilde{E})$ and
$S_{\ast} \in \K_{\ast} = \PD (\widetilde{E})$. Suppose that
$\widehat{S}$ is $j$-dimensional, and $S_{\ast}$ is
$(d-j)$-dimensional. Let $U \subset S_{\ast}$ be a
$(d-j-1)$-dimensional face of $\PD (\widetilde{E})$, contained in
$S_{\ast}$. Then the hyperplane containing $\frac{1}{2}(\widehat{S} +
U)$ is a bounding hyperplane of $T_S$. Similarly, if $V \subset
\widehat{S}$ is a $(j-1)$-dimensional face of $\DT (\widetilde{E})$
contained in $\widehat{S}$, then $\frac{1}{2}(S_{\ast} + V)$ also
determines a bounding hyperplane of $T_S$. Doing this over all such
$(d-j-1)$-dimensional faces $U \subset S_{\ast}$
and $(j-1)$-dimensional faces $V \subset \widehat{S}$ gives the set of
bounding hyperplanes of $T_S$. 

We compute the pair $(A_S, b_S)$ as follows. Let $\{ \tilde{a}_{k_i}
\}_{i=1}^{j+1}$ be the vertices of $\widehat{S}$ and let $\{
\rho_{\ell_m} \}_{m \geq 1}$ be the vertices of $S_{\ast}$, both of
which have been stored from the calculation in Section \ref{sec:
  compute Kast and Khat}. The vertices of $T_S$ are $\left\{
  \frac{1}{2} (\tilde{a}_{k_i} + \rho_{\ell_m}) \right\}_{i, m}$. We
compute the mean vector $\mu_S$ of this vertex set and store it
away. We will use $\mu_S$ to center $T_S$ at the origin, for the
purpose of computing $(A_S, b_S)$. The work and storage is no more
than $O (N^{\lceil d/2 \rceil})$ per $S \in \K$.

First fix $S_{\ast}$ and consider the $(j-1)$-dimensional faces $V \in
\widehat{\K} = \DT (\widetilde{E})$ contained in $\widehat{S}$. These are the
children of $\widehat{S}$, which have been stored previously. Let $\{
\tilde{a}_{k_n}\}_{n=1}^j$ be the vertices of $V$. We compute:
\begin{equation} \label{eqn: vertex set of hyperplane}
\left\{ \frac{1}{2} (\tilde{a}_{k_n} + \rho_{\ell_m}) - \mu_S
\right\}_{n, m},
\end{equation}
which are the vertices of $T_S$ corresponding to $\frac{1}{2}(S_{\ast}
+ V)$, centered at the origin. Collect $d$ (linearly independent)
vectors from \eqref{eqn: vertex set of hyperplane}, and store them as
the rows of a $d \times d$ matrix $M_V$. The solution
$\alpha \in \R^d$ to $M_V \alpha =
\mathbf{1}$ gives the hyperplane $(\alpha, 1)$ relative to
the origin. To shift it back to the original location of $T_S$, we
compute $b = 1 + \alpha \cdot \mu_S$. The vector $\alpha$ is one row
of $A_S$, and the scalar $b$ is the corresponding entry in $b_S$. The
work and storage is $O(N^{\lceil d/2 \rceil})$ per $S \in \K$. After
passing over all $S \in \K$, the total work for this step is
$O(N^{d+1})$ and the maximum storage needed at any given time is
$O(N^{\lceil d/2 \rceil})$.

The remaining hyperplanes are obtained analogously. Fix $\widehat{S}$
and consider the $(d-j-1)$-dimensional faces $U \in \PD
(\widetilde{E})$ contained in $S_{\ast}$. These faces are obtained by
going through the list of parents of $\widehat{S}$, stored previously
in Section \ref{sec: compute Kast and Khat}, and then looking up the dual
face stored for $\PD (\widetilde{E})$. The remainder of the
calculation is the same as in the previous paragraph. The work and
storage is $O(N^{\lceil d/2 \rceil +1})$ per $S \in \K$, owing to the fact that each
$\widehat{S}$ can have $O(N)$ parents. The total work is therefore
$O(N^{d+2})$ and the total storage is $O(N^{\lceil d/2 \rceil + 1})$. 

This concludes the one time work. Summing over the complexity of all
calculations described in Section \ref{sec: one time work}, the total
work is $O(N^{d+2})$ and the storage is $O(N^{\lceil d/2 \rceil +
  1})$. Figure \ref{fig: TS cells} contains an image of the final cellular
decomposition $\{ T_S \}_{S \in \K}$ in two dimensions.

\begin{figure}
\center
\frame{\includegraphics[width=3.0in]{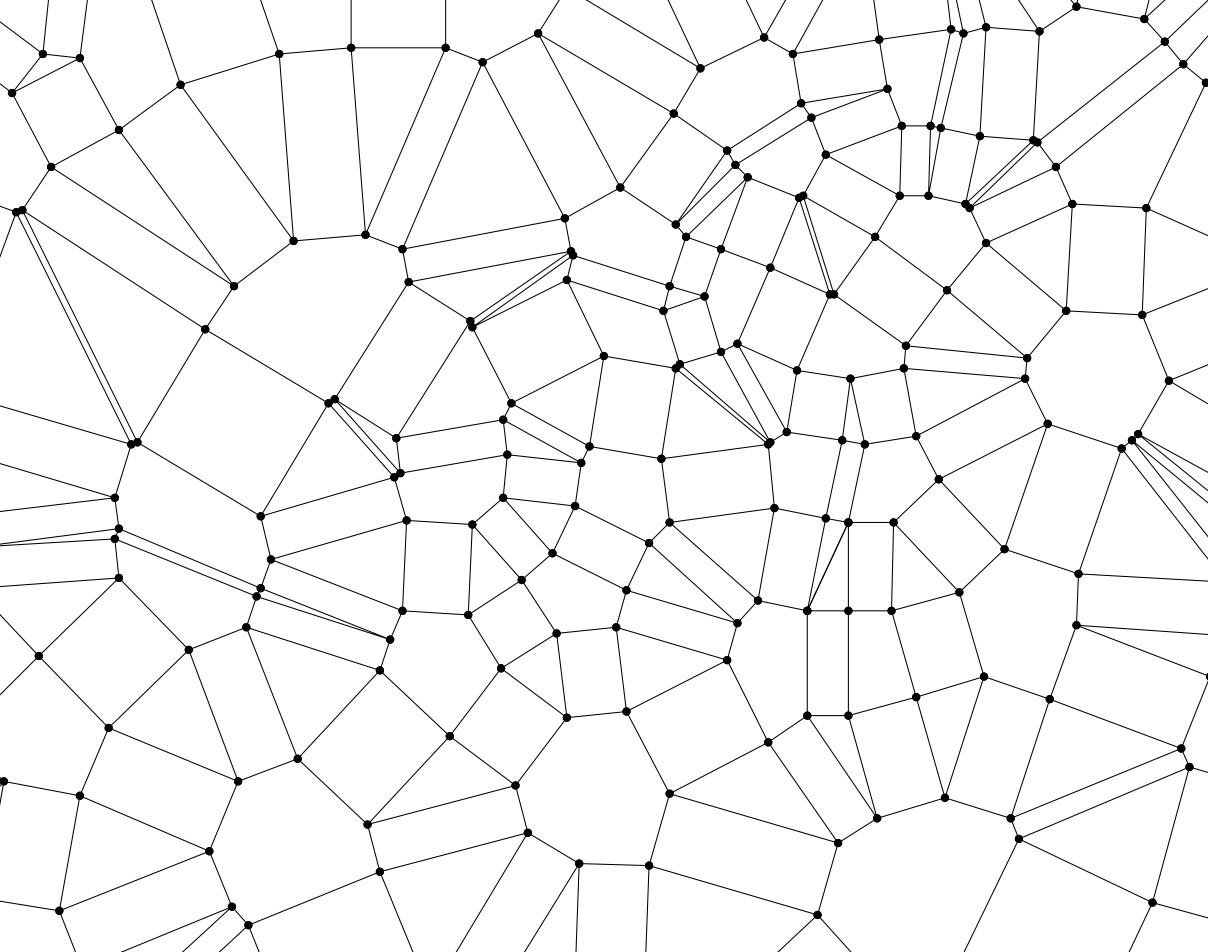}}
\caption{The final cellular decomposition consisting of the $T_S$
  cells derived from the power diagram and triangulation in Figure
  \ref{fig: PD and DT}.}
\label{fig: TS cells}
\end{figure}

\section{Query work} \label{sec: query work}

We now consider the query work. At this point the one
time work is complete, and the algorithm is ready to accept a query
point $x \in \R^d$. The first step is to determine the cell $T_S$ such
that $x \in T_S$. Then we evaluate $F_S(x)$ and $\nabla F_S(x)$. 

\subsection{Determining $T_S$}

Recall that as part of the one time work, for each cell $T_S$ we have
stored a matrix $A_S$ and a vector $b_S$ such that 
\begin{equation*}
x \in T_S \quad \text{if and only if} \quad A_Sx \leq b_S.
\end{equation*}
Therefore, the simplest way to determine the set $T_S$ such that $x
\in T_S$ is to check if $A_Sx \leq b_S$ for each $S \in \K$. There are
$O(N^{\lceil d/2 \rceil})$ sets in $\K$, and the number of rows in any
matrix $A_S$ is bounded from above by $d \cdot N$. Thus, using this
approach, the query work is $O (N^{\lceil d/2 \rceil + 1})$. 

An alternative is given in \cite{fuchs:polytopicPointLoc2010}, which
describes an algorithm for efficient point location in general
polytopic data sets. Since $\{T_S\}_{S \in \K}$ is polytopic (aside
from the unbounded regions, but these cells can be accounted for), we can
utilize this point location algorithm for our query
work. Indeed, the point location algorithm of
\cite{fuchs:polytopicPointLoc2010} requires only that each polytopic set be
described via a matrix $A$ and a vector $b$, just as we have done in
Section \ref{sec: computing TS cells}. 

Applied to our particular data structure, the algorithm results in a
tree structure over the cellular decomposition $\{T_S\}_{S
  \in \K}$. More precisely, the authors build a binary tree in which
each node corresponds to a subspace of $\R^d$. The nodes are split via
a hyperplane, with the left child corresponding to the part of the
subspace lying to the ``left'' of the hyperplane, and the right child
corresponding to the subspace lying to the ``right'' of the
hyperplane. The root of the tree is $\R^d$, and the leaves of the tree
are the cells $\{T_S\}_{S \in \K}$. 

One way to build such a tree is to use the bounding hyperplanes of the
cells $\{T_S\}_{S \in \K}$, and to optimize your selection of the
hyperplanes in some fashion. This is proposed in \cite{tondel:PWA2003},
and in many cases will yield a balanced tree that can evaluate queries
in $O(\log N)$ time. As discussed in
\cite{fuchs:polytopicPointLoc2010}, there is no guarantee though and
some cellular arrangements will yield unbalanced trees via this method. The
alternate algorithm proposed in \cite{fuchs:polytopicPointLoc2010}
utilizes splitting hyperplanes that are not necessarily bounding
hyperplanes of $\{T_S\}_{S \in \K}$, but can ensure that the tree
is balanced. These hyperplanes are computed by solving a certain
optimization problem, that is hard to analyze precisely
so as to determine the additional one time work. Nevertheless, the
benefit to the query algorithm is clear, as it would guarantee that
the query work is $O(\log N)$. 

\subsection{Evaluating the interpolant}

Now we must compute $F(x)$ and $\nabla F(x)$. Recall that $T_S =
\frac{1}{2}(\widehat{S} + S_{\ast})$, where 
$\widehat{S} \in \widehat{\K} = \DT(\widetilde{E})$ is a face in the
triangulation, and $S_{\ast} \in \K_{\ast} = \PD(\widetilde{E})$ is
the dual (possibly unbounded) face that is part of the power diagram. Each
point $x \in T_S$ has a unique representation $x = \frac{1}{2}(y+z)$,
  where $y \in \widehat{S}$ and $z \in S_{\ast}$. Additionally, recall
  that for $x \in T_S$,
\begin{equation*}
F(x) = F_S(x) = d_S(S_C) + \frac{M}{2}d(x,S_H)^2 -
\frac{M}{2}d(x,S_E)^2, \quad x \in T_S.
\end{equation*}
Since $\distset(x,S_H) = \frac{1}{2}\distset(z,S_H) = \frac{1}{2}|z-S_C|$ and
$\distset(x,S_E) = \frac{1}{2}\distset(y,S_E) = \frac{1}{2}|y-S_C|$,
we can rewrite $F_S$ as:
\begin{equation*}
F_S(x) = d_S(S_C) + \frac{M}{8}|z-S_C|^2 - \frac{M}{8}|y-S_C|^2, \quad
x = \frac{1}{2}(y+z) \in T_S.
\end{equation*}
From Section \ref{sec: wells} we also know that the gradient $\nabla F_S(x)$
can be written in terms of $y$ and $z$:
\begin{equation*}
\nabla F_S(x) = \frac{M}{2}(z-y), \quad x = \frac{1}{2}(y+z) \in T_S.
\end{equation*}
Since the one time work stores $S_C$ and $d_S(S_C)$, to return $F(x)$
and $\nabla F(x)$ we must find the $y \in \widehat{S}$ and $z
\in S_{\ast}$ such that $x = \frac{1}{2}(y+z)$. This is accomplished
by projecting $x$ onto $S_H$ and $S_E$, and using the positions of the
projected points relative to $S_C$ to find $y$ and $z$. The amount of
work only depends on the dimension. 

\section{Numerical simulations} \label{sec: numerical simulations}

We report the run times and complexity of numerical
simulations\footnote{Using the code available at:
  \href{https://github.com/matthew-hirn/C-1-1-Interpolation}{\texttt{https://github.com/matthew-hirn/C-1-1-Interpolation}}}
in order to give the reader an idea of the real time cost of computation. All
computations were computed on an Apple iMac desktop computer with 32
GB of RAM and a 4 GHz Intel Core i7 processor. The unit of time is
seconds. The set $E$, consisting of $N$ points in $\R^d$, was
uniformly randomly selected from the cube $[0,N^{2/d}]^d$. The
function values and partial derivatives were uniformly randomly
selected from the set $[-1.1,-0.9] \cup [0.9, 1.1]$. Query work run
times are the average of $2^{10}$ queries uniformly randomly selected
from the cube $[-1,N^{2/d}+1]^d$.

Figures \ref{fig: d=2}, \ref{fig: d=3} and \ref{fig: d=4} show $\log_2$--$\log_2$ plots of
the one time work, and its three primary components (computing
$\Gamma^1(P; E)$, computing $\DT (\widetilde{E})$ / $\PD
(\widetilde{E})$, and computing the cells $\{ T_S \}_{S \in \K}$), as
a function of $N$ for dimensions $d = 2, 3, 4$. The $\log_2$--$\log_2$
plots of each of the components grow linearly, indicating that the
work scales as $O(N^{\alpha})$ for some $\alpha > 0$. The computation
of $\Gamma^1(P; E)$ we know grows as $O(N^2)$. For $d = 2, 3$, this
component has the steepest slope, indicating that in practice the
computation of the power diagram and dual triangulation, in addition
to the cells $\{ T_S \}_{S \in \K}$, grows as $O (N^{\alpha})$ for
some $\alpha < 2$. For $d=4$ however, the computation of $\DT
(\widetilde{E})$ / $\PD (\widetilde{E})$ appears to grow nearly at the
rate $C \cdot N^2$, which is the same asymptotically as the $C' \cdot
N^2$ growth for the computation of $\Gamma^1(P; E)$. However, the
constant factor $C$ is significantly larger than $C'$, which affects
the practicality of the computation. We can conclude that for $d \geq
4$, the computation of the power diagram will grow at least as
$O(N^2)$, with a large constant factor independent of $N$. Figure
\ref{fig: numK} shows the total number of cells $\{ T_S \}_{S \in \K}$
(equivalently the number of faces of $\DT
(\widetilde{E})$ / $\PD (\widetilde{E})$) as a function of $N$, for $d
= 2, 3, 4$. In numerical
simulations, the query work (not plotted) grew proportionally to the
number of these cells.

\begin{figure}[h]
\center
\subfigure[$d=2$]{
\includegraphics[width=2.25in]{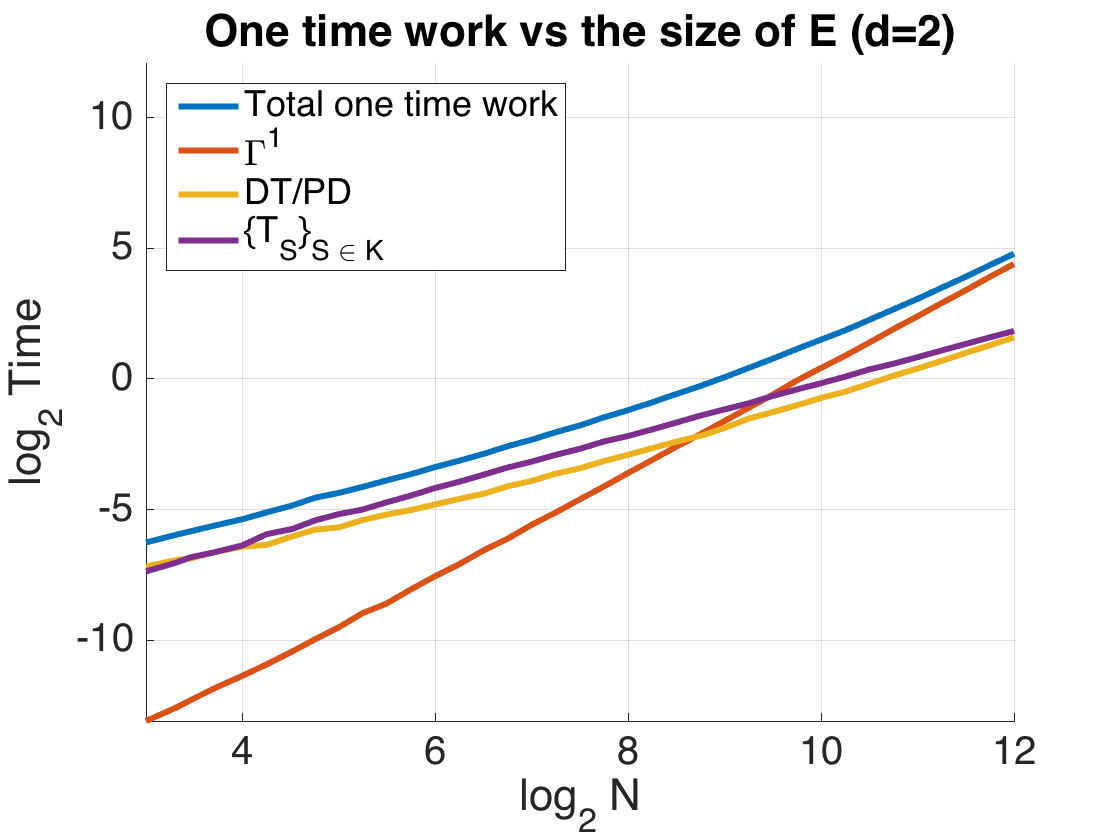}
\label{fig: d=2}
}
\subfigure[$d=3$]{
\includegraphics[width=2.25in]{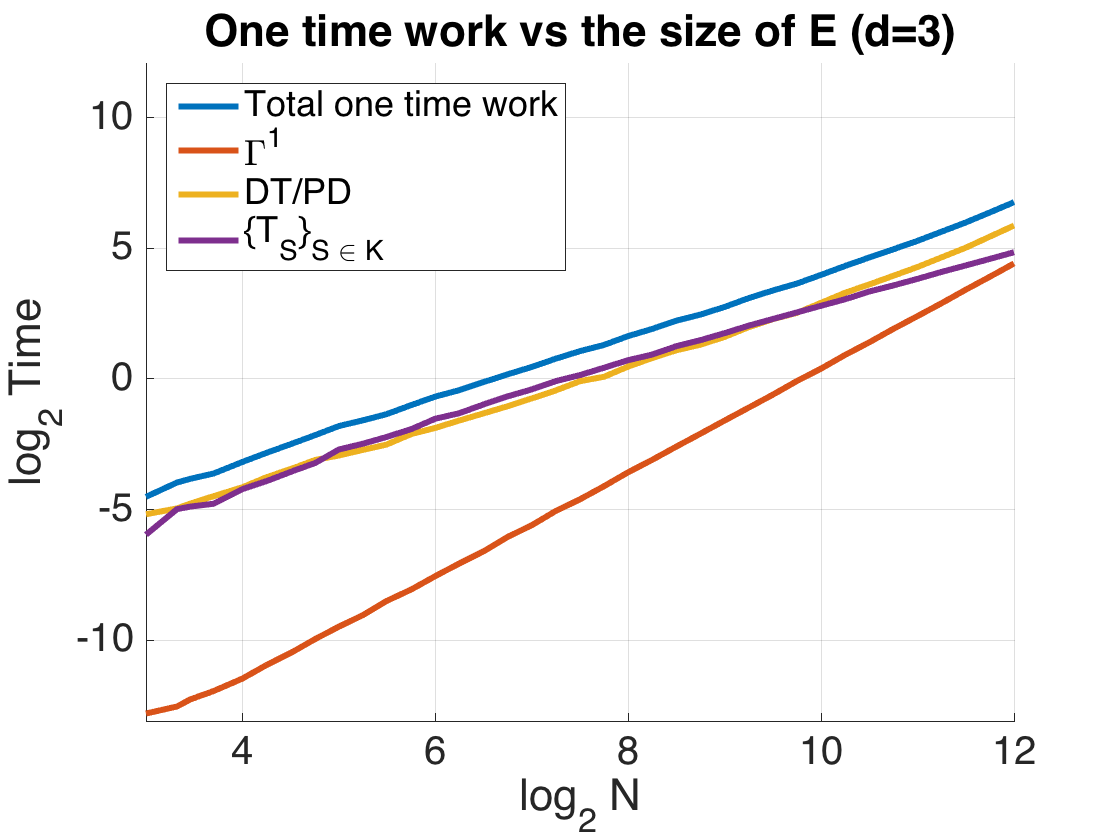}
\label{fig: d=3}
}
\subfigure[$d=4$]{
\includegraphics[width=2.25in]{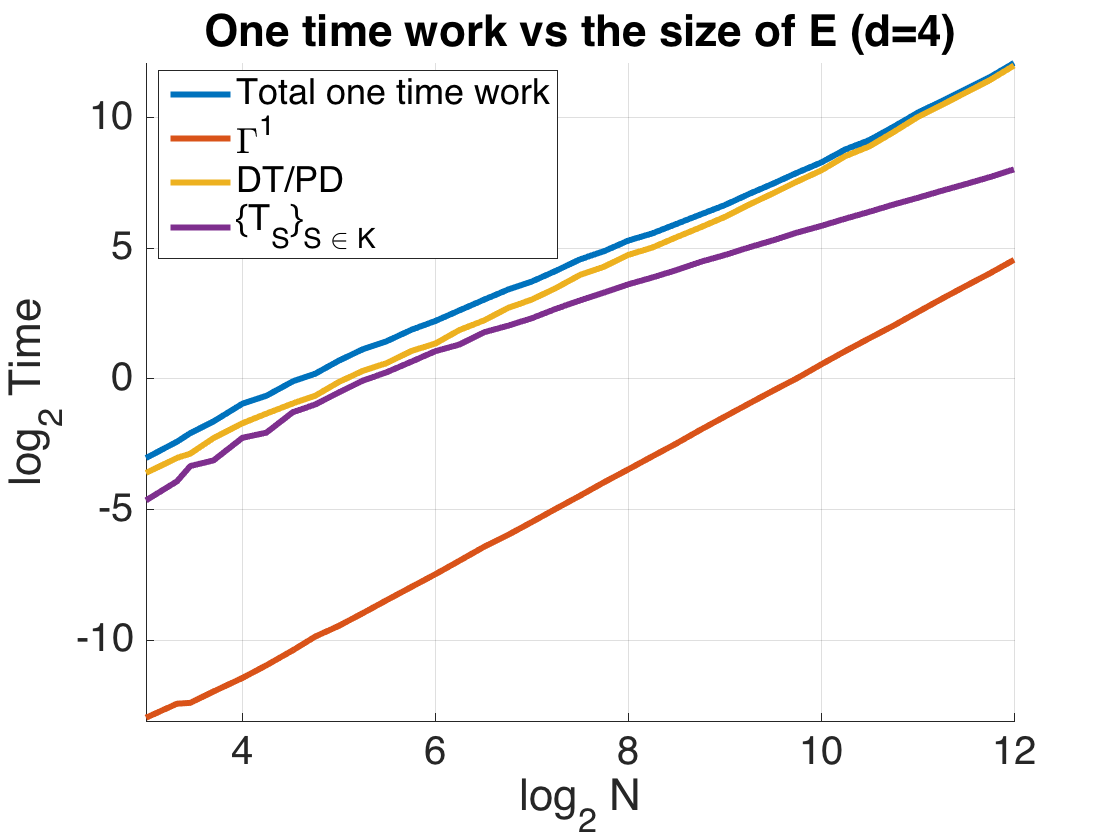}
\label{fig: d=4}
}
\subfigure[$\#(\K)$]{
\includegraphics[width=2.25in]{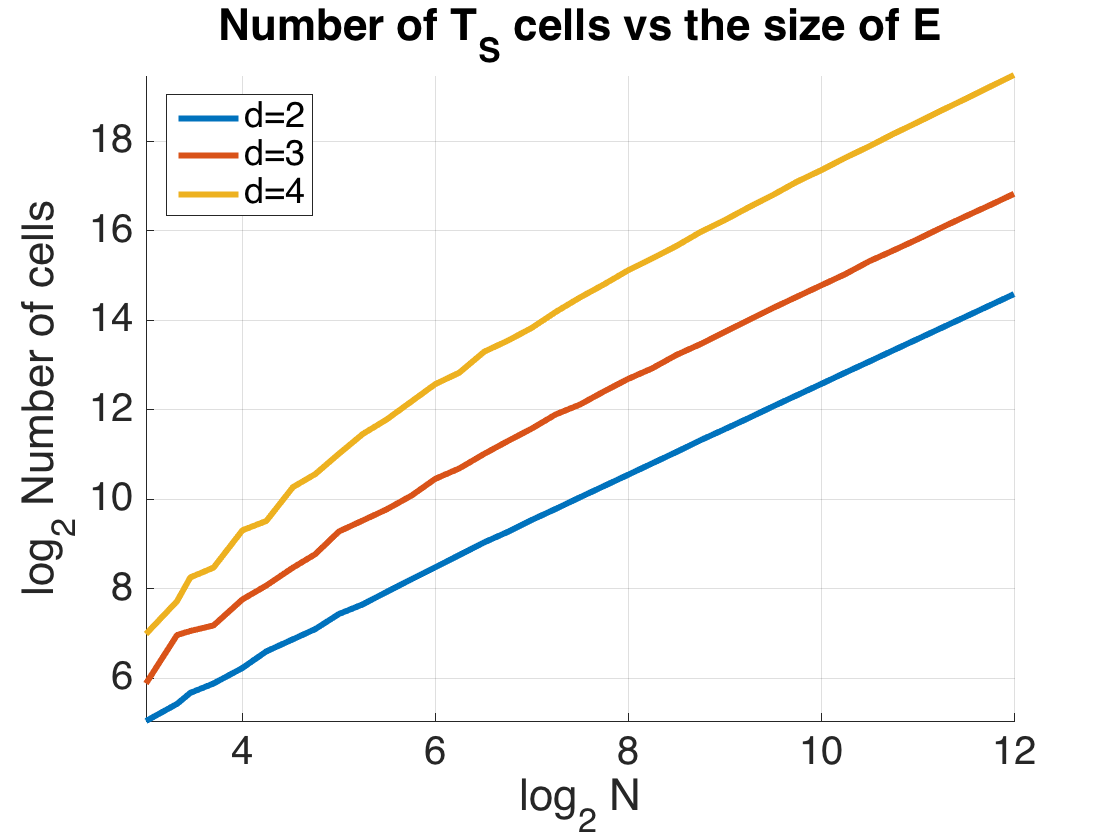}
\label{fig: numK}
}
\caption{One time work and number of cells $\{T_S\}_{S \in \K}$ versus $N
  = \#(E)$ on a $\log_2$--$\log_2$ scale, for dimensions $d = 2, 3, 4$.}
\label{fig: vary N fix d}
\end{figure}

The accuracy and stability of the algorithm were tested by computing
$|F(a) - f_a|$ and $|\partial_{x_i} F(a) - D_a f(i)|$ for each $a \in
E$ and $i = 1, \ldots d$. The algorithm reported an error whenever
the absolute difference was more than $10^{-10}$. Across all values of $N$
tested, the number of errors for each dimension was:
\begin{itemize}[itemsep=1pt]

\item
$d=2$: Function value errors: $0 / 25701$; Partial derivative errors: $9/51402$.

\item
$d=3$: Function value errors: $10 / 25701$; Partial derivative errors: $30/77103$.

\item
$d = 4$: Function value errors: $3 / 25701$; Partial derivative
errors: $12/102804$.

\end{itemize}
Thus across dimensions $d = 2, 3, 4$, the interpolation algorithm made
an error only 0.0208\% of the time. 

We plot the logarithm of the one time work against the dimension
$d$, for a fixed size $E$, with $N = 16$, $18$ and $20$ in Figures
\ref{fig: N=16}, \ref{fig: N=18} and \ref{fig: N=20},
respectively. Figure \ref{fig: numK against d} plots $\log_2 \#(\K)$
against the dimension $d$. Notice that all four graphs are linear. If numerically
$\#(\K) = O(N^{C_{\K} d})$ holds, then we should be able to solve for a
consistent value of $C_{\K}$ from the three plots in Figure \ref{fig: numK
  against d}, using $\frac{\log_2 \#(\K)}{d} = C_{\K} \log_2N$. We estimate
the slopes $\frac{\log_2 \#(\K)}{d}$ using a least squares fit, and
then solve for $C_{\K}$, obtaining $C_{\K} = 0.3241, 0.3149, 0.3117$ for $N =
16, 18, 20$, respectively. Thus, for these experiments, we obtain
$C_{\K} \approx 0.3169$. For the one time work, the computation of $\PD
(\widetilde{E})$ / $\DT (\widetilde{E})$ dominates as $d$
increases. We have the similar hypothesis that the cost of this
computation is $O (N^{C_{\PD} d})$, and for $N = 16, 18, 20$ estimate
that $C_{\PD} =  0.5389, 0.5240, 0.5433$, respectively. Thus $C_{\PD}
\approx 0.5354$ for this collection of numerical simulations. Recall from
Section \ref{sec: compute Kast and Khat} that the theoretical bound
for the number of cells is $\#(\K) = O(N^{\lceil d/2 \rceil})$, while
the cost of computing the relevant data structures to hold $\DT
(\widetilde{E})$ / $\PD (\widetilde{E})$ was $O(N^{d+1})$. From this
limited collection of numerical experiments, one might hypothesize for
uniformly sampled data an expected number of cells growing as $\#(\K)
\approx O(N^{d/3})$ and a one time work cost that is approximately
$O(N^{d/2})$.

Figure \ref{fig: interpolant pic} contains a plot of an
extension colored according to the cells $\{T_S\}_{S \in \K}$. 

\begin{figure}[h]
\center
\subfigure[$N=16$]{
\includegraphics[width=2.25in]{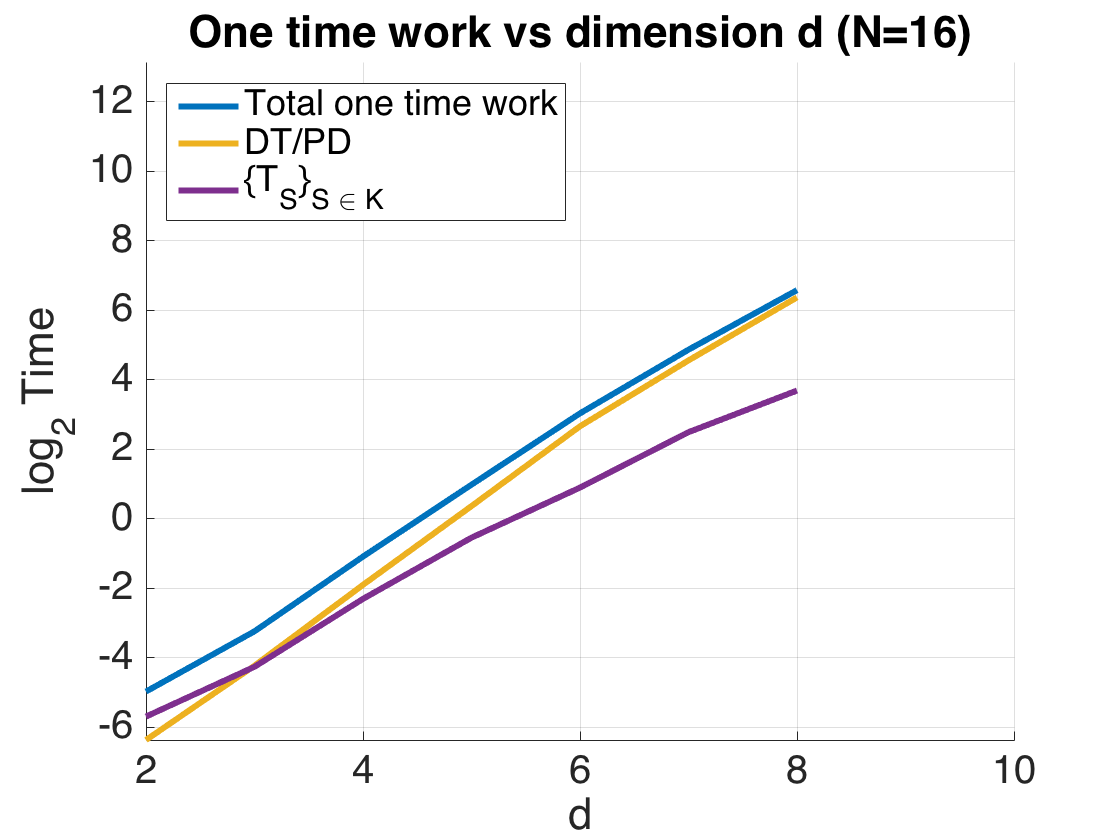}
\label{fig: N=16}
}
\subfigure[$N=18$]{
\includegraphics[width=2.25in]{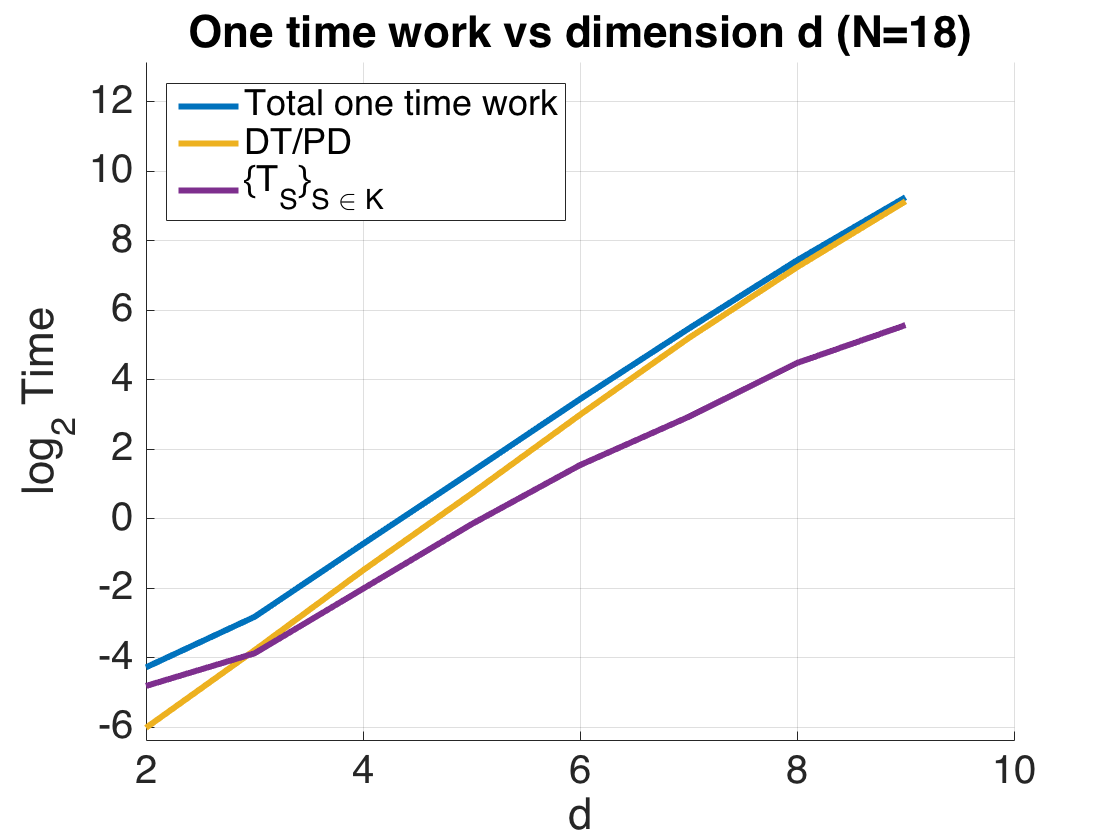}
\label{fig: N=18}
}
\subfigure[$N=20$]{
\includegraphics[width=2.25in]{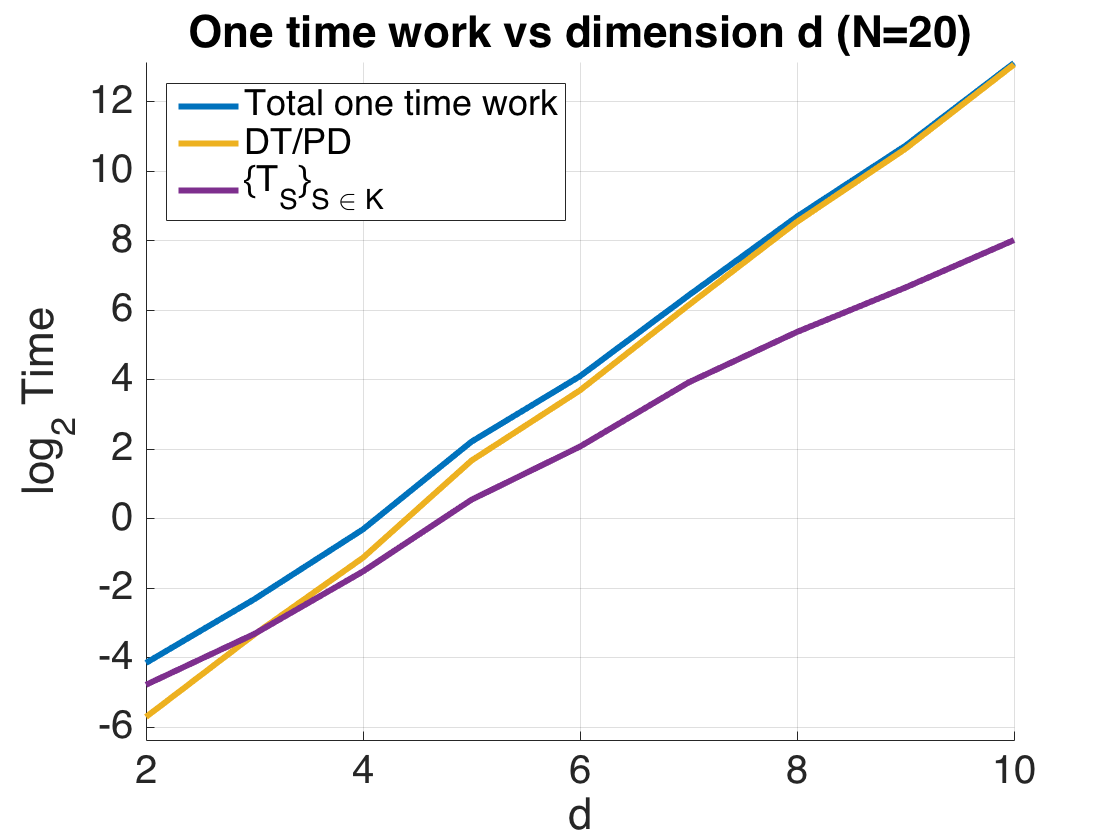}
\label{fig: N=20}
}
\subfigure[$\#(\K)$]{
\includegraphics[width=2.25in]{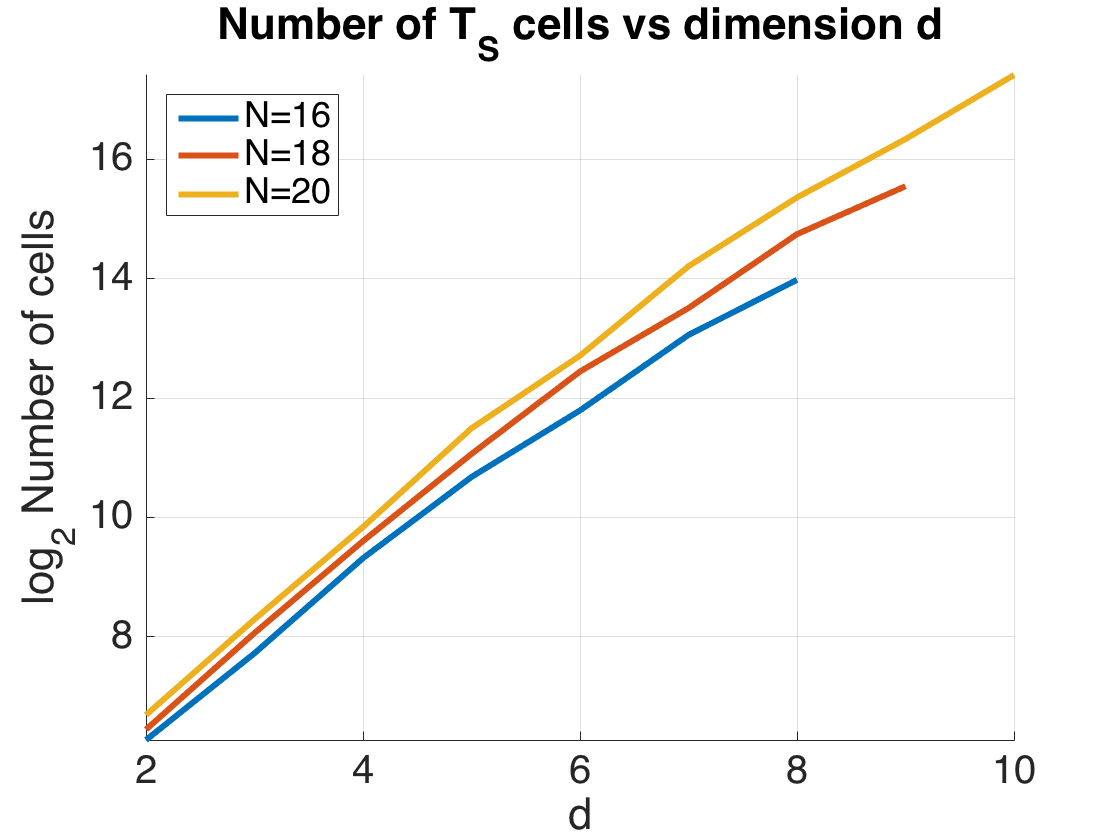}
\label{fig: numK against d}
}
\caption{The logarithm (base $2$) of the one time work and $\log_2 \# \{T_S\}_{S
    \in \K}$ versus the dimension $d$, for $N = 16, 18, 20$.}
\end{figure} 

\begin{figure}[h]
\center
\includegraphics[width=3.5in]{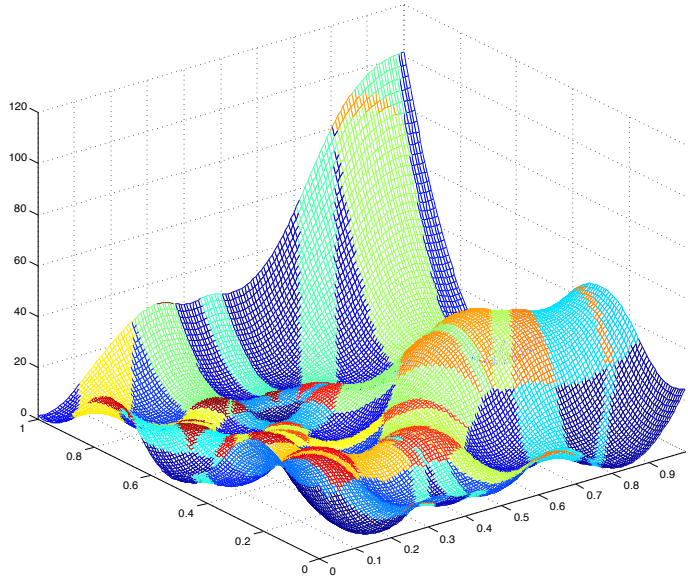}
\caption{Interpolant $F \in C^{1,1}(\R^2)$ colored by the cells $\{T_S\}_{S \in \K}$.}
\label{fig: interpolant pic}
\end{figure}

\section{Conclusion}

We introduced an efficient, practical algorithm for computing
interpolations of data, consisting of a set $E \subset \R^d$ and
either a $1$-field $P: E \rightarrow \PP$ or a function $f: E
\rightarrow \R$, with functions $F \in C^{1,1}(\R^d)$ such that $\Lip
(\nabla F)$ is within a dimensionless factor of being minimal. Amongst
Whitney type interpolation algorithms, it is the first algorithm to be
implemented on a computer, thus moving the development of these
algorithms from theoretical constructs to proof of concept. Numerical
experiments indicate that the algorithm run time is reasonable up to
dimension $d = 10$ for small sets $E$, as well as for larger sized
initial data in small dimensions.

Fundamentally, the algorithm is built upon two key components: (1) the
theoretical results of Wells \cite{wells:diffFuncLipDeriv1973} and Le
Gruyer \cite{legruyer:minLipschitzExt2009}, giving a construction of
the interpolant $F$ and a closed form solution for the minimal value
of $\Lip (\nabla F)$, respectively; and (2) a collection of notions
and algorithms from computational geometry, including the well
separated pairs decomposition \cite{callahan:wspd1995}, power diagrams, triangulations, and
convex hulls, which are used to compute the relevant values,
underlying structures, and ultimately the interpolant $F$. 

These results are opening new mathematical avenues
related to Whitney's extension theorem, both pure and applied. High
dimensional data is being collected on an unprecedented scale; thus
efficiency in both the size of $E$ and the dimension $d$ is
needed. Can the efficiency of the algorithm be
improved, while maintaining its precision? Notions related to
approximate Voronoi diagrams \cite{Har-Peled:approxVoronoi2001} may be
of use here, if the interpolant construction is stable. Looking
further ahead, an algorithm of this type for $C^{2,1}(\R^d)$ would first
require results analogous to those of Wells and Le Gruyer, pertaining
to precise formulations of the best Whitney's constant. Positive
results along these lines and others have the potential to push
Whitney interpolation algorithms beyond proof of concept and into the
applied sciences.

\section{Acknowledgements}

M.H. would like to thank Charles Fefferman for
introducing him to the problem as well as for several helpful
conservations. He would also like to thank Erwan Le Gruyer and
Hariharan Narayanan for numerous insightful discussions.

All three authors would like to thank the anonymous referee whose
numerous corrections, suggestions, and insights greatly improved the
manuscript.

\begin{appendices}

\section{Convex optimization} \label{sec: convex optimization}

Everything in this Appendix can be found in
\cite{boyd:convexOptimization2004}; we have summarized the parts
relevant to Section \ref{sec: function interpolation problem} to serve
as a convenient reference.

\subsection{Self-concordant functions}

A convex function $h: \R \rightarrow \R$ is said to be \emph{self-concordant}
if 
\begin{equation*}
|h^{(3)}(x)| \leq 2 h^{(2)}(x)^{3/2}, \quad \forall \, x \in \R.
\end{equation*}
If $h: \R^n \rightarrow \R$, then we say it is self-concordant if
$\tilde{h}(t) = h(x + tv)$ is self-concordant as a function of $t \in \R$ for
all $x,v \in \R^n$. 

Self-concordant functions were first introduced by Nesterov and
Nemirovski \cite{nesterov:interiorPoint1994}. They are particularly
useful in convex optimization since they form a class of
functions for which one can rigorously analyze the complexity of
Newton's method.

\subsection{Unconstrained optimization} \label{sec: unconstrained optimization}

An unconstrained convex optimization problem is one of the form:
\begin{equation} \label{eqn: unconstrained convex opt}
\text{minimize} \quad h(x),
\end{equation}
where $h: \R^n \rightarrow \R$ is convex. Unconstrained convex
optimization problems can be solved via any number of descent
methods. \\

\begin{mdframed}[nobreak=true,frametitle={\small General descent algorithm}]
\small
\setlength{\parskip}{0pt}
\texttt{Input: A starting point $x$.\\
\\
Repeat:
\begin{enumerate}[nolistsep,noitemsep]
\item
Determine a descent direction $\Delta x$.
\item
Line search: Choose a step size $t > 0$.
\item
Update: $x \mapsto x + t \Delta x$.
\end{enumerate}
Until stopping criterion is satisfied.
}
\end{mdframed}
\vskip 10pt

There are several ways to determine the descent direction. We focus on
Newton's method \cite[p. 484]{boyd:convexOptimization2004}, where
$\Delta x$ is given by the Newton step:
\begin{equation*}
\Delta x = - \nabla^2 h(x)^{-1} \nabla h(x).
\end{equation*}
The line search is performed using a backtracking line search
\cite[p. 464]{boyd:convexOptimization2004}. \\

\begin{mdframed}[nobreak=true,frametitle={\small Backtracking line
    search}]
\small
\setlength{\parskip}{0pt}
\texttt{Input: A descent direction $\Delta x$ for $h$ at $x$, $\alpha
  \in (0, 0.5)$, $\beta \in (0,1)$.\\
\\
$t = 1$ \\
While $h(x + t\Delta x) > h(x) + \alpha t \nabla h(x) \cdot \Delta x$,
$t \mapsto \beta t$.
}
\end{mdframed}
\vskip 10pt

Each iteration of Newton's method is often times referred to as a Newton step,
even though it entails both computing the Newton step and performing the
line search. 

\subsection{Constrained optimization}

A constrained convex optimization problem is of the form:
\begin{align}
\text{minimize} &\quad h_0(x) \label{eqn: convex opt problem}\\
\text{subject to} &\quad h_i(x) \leq 0, \quad i=1, \ldots, m, \nonumber
\end{align}
where the functions $h_0,\ldots,h_m: \R^n \rightarrow \R$ are
convex. The function $h_0$ is the objective, while the functions
$h_1,\ldots,h_m$ are the constraints. One can solve \eqref{eqn: convex
  opt problem} using interior point methods. 

Define $\phi: \R^n \rightarrow \R$ as
\begin{equation*}
\phi(x) = -\sum_{i=1}^m \log (-h_i(x)).
\end{equation*}
The following unconstrained optimization problem is closely related to
\eqref{eqn: convex opt problem},
\begin{equation} \label{eqn: log barrier}
\text{minimize} \quad th_0(x) + \phi(x),
\end{equation}
where $t > 0$. Indeed, \eqref{eqn: log barrier} is an approximation
for our original constrained convex optimization problem, and as $t
\rightarrow \infty$ the two problems become equivalent. 

The barrier method \cite[p. 569]{boyd:convexOptimization2004} solves
\eqref{eqn: convex opt problem} by iteratively solving \eqref{eqn: log
  barrier} for increasing values of $t$, using the minimizer of one
iteration as the starting point for the next iteration. Let
$x^{\star}(t)$ be the minimizer of \eqref{eqn: log barrier}. A point
$x$ is strictly feasible if $h_i(x) < 0$ for all $i=1,\ldots,m$. The
barrier method is then given as: \\

\begin{mdframed}[nobreak=true,frametitle={\small Barrier method}]
\small
\setlength{\parskip}{0pt}
\texttt{Input: A strictly feasible $x$, $t = t^{(0)} > 0$, $\mu > 1$,
  $\epsilon > 0$ \\
\\
Repeat:
\begin{enumerate}[nolistsep,noitemsep]
\item
Centering step: Compute $x^{\star}(t)$ by minimizing $th_0 + \phi$
starting at $x$.
\item
Update: $x \mapsto x^{\star}(t)$.
\item
Stopping criterion: Quit if $m/t < \epsilon$.
\item
Increase $t$: $t \mapsto \mu t$.
\end{enumerate}
}
\end{mdframed}
\vskip 10pt

The barrier method solves \eqref{eqn: convex opt problem} with
accuracy no worse than $\epsilon$. The centering step is performed
using Newton's method with a backtracking line search. The following
theorem bounds the total number of Newton steps for the barrier method. 
\begin{theorem}[{\cite[p. 591]{boyd:convexOptimization2004}}] \label{thm:
    newton step bound}
Define the following constant:
\begin{align*}
C = \frac {10 - 4 \alpha}{\alpha \beta (1 - 2\alpha)^2} + \log_2
\log_2 (1/\epsilon). 
\end{align*}
Suppose that $th_0 + \phi$ is self-concordant and that the sublevel
sets of $h_0, \ldots, h_m$ are bounded. If $\mu = 1 + 1/\sqrt{m}$,
then the barrier method requires no more than 
\begin{equation*}
C \left( 1 + \log_2 \left( \frac{m}{t^{(0)} \epsilon} \right)
  \sqrt{m} \right)
\end{equation*}
Newton steps to solve \eqref{eqn: convex opt problem} to within
accuracy $\epsilon$. If $x^{\star}$ is the solution to \eqref{eqn:
  convex opt problem}, the barrier method returns a value $\tilde{x}^{\star}$
such that $|h_0(x^{\star}) - h_0(\tilde{x}^{\star})| \leq \epsilon$
and $h_i(\tilde{x}^{\star}) \leq 0$ for each $i = 1, \ldots, m$.
\end{theorem}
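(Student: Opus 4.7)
The plan is to decompose the total Newton step count as (number of outer barrier iterations) times (number of Newton steps per centering), and bound each factor separately. First I would count the outer iterations. The barrier method starts at $t = t^{(0)}$ and multiplies $t$ by $\mu$ each round, stopping when $m/t < \epsilon$; hence the number of outer iterations is exactly $\lceil \log (m/(t^{(0)} \epsilon)) / \log \mu \rceil$. Plugging in $\mu = 1 + 1/\sqrt{m}$ and using $\log(1 + 1/\sqrt{m}) \geq 1/(2\sqrt{m})$ for $m \geq 1$, this is at most $O(\sqrt{m} \log(m/(t^{(0)} \epsilon)))$, which produces the $\sqrt{m} \log_2(m/(t^{(0)} \epsilon))$ factor appearing in the theorem.

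Next I would bound the number of Newton steps needed in a single centering step, i.e. to minimize $\psi_t := t h_0 + \phi$ starting from $x^\star(t/\mu)$. The key tool is the standard convergence theory for Newton's method with backtracking line search applied to a self-concordant function (see Chapter 9 of Boyd--Vandenberghe). That theory gives two regimes, controlled by the Newton decrement $\lambda_{\psi_t}(x) = (\nabla \psi_t(x)^\top \nabla^2 \psi_t(x)^{-1} \nabla \psi_t(x))^{1/2}$: a damped phase where each step decreases $\psi_t$ by at least a fixed amount depending on $\alpha,\beta$, and a quadratically convergent phase once $\lambda_{\psi_t} < \eta$ for an appropriate threshold $\eta \in (0, 1/4]$. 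The quadratic phase reaches accuracy $\epsilon'$ in $\log_2 \log_2 (1/\epsilon')$ steps, which is how $\log_2 \log_2 (1/\epsilon)$ gets absorbed into the constant $C$.

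The main obstacle — and the place where the choice $\mu = 1 + 1/\sqrt{m}$ is essential — is showing that the warm-start $x^\star(t/\mu)$ is already in the quadratic phase for $\psi_t$, so that each centering takes $O(1)$ Newton steps rather than $O(\log(1/\epsilon_{\mathrm{center}}))$ damped steps. For this I would use two facts about the log-barrier $\phi$: it is self-concordant, and $\phi$ is an $m$-self-concordant barrier so that $|\nabla h_0(x)^\top u| \leq \sqrt{m} \cdot (u^\top \nabla^2 \phi(x) u)^{1/2}$ for any $u$. Writing $\nabla \psi_t(x^\star(t/\mu)) = (t - t/\mu) \nabla h_0(x^\star(t/\mu)) = t(1-1/\mu)\nabla h_0(x^\star(t/\mu))$ and noting that $\nabla^2 \psi_t \succeq \nabla^2 \phi$, the Newton decrement of $\psi_t$ at $x^\star(t/\mu)$ satisfies
\begin{equation*}
\lambda_{\psi_t}(x^\star(t/\mu)) \leq t(1 - 1/\mu) \sqrt{m}/t = (1 - 1/\mu)\sqrt{m} = 1 - 1/(1 + 1/\sqrt{m}) \cdot \sqrt{m},
\end{equation*}
which, for $\mu = 1 + 1/\sqrt{m}$, is bounded by a constant strictly less than $1$ and in fact less than the quadratic-convergence threshold $\eta$ used in the Newton analysis. (A more careful bookkeeping gives the exact constants $\alpha,\beta$ in $C$.) Thus each centering step costs at most the fixed constant $(10 - 4\alpha)/(\alpha\beta(1-2\alpha)^2) + \log_2\log_2(1/\epsilon)$ Newton steps.

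Finally I would combine the two estimates: (outer iterations) $\times$ (Newton steps per centering) gives the claimed bound $C(1 + \log_2(m/(t^{(0)}\epsilon))\sqrt{m})$. The feasibility and accuracy statements at the end of the theorem follow because each iterate produced by Newton's method on $\psi_t$ remains in the domain of $\phi$ (hence strictly feasible, $h_i(\tilde x^\star) < 0$), and the standard duality gap bound for the log-barrier gives $h_0(\tilde x^\star) - h_0(x^\star) \leq m/t \leq \epsilon$ upon termination.
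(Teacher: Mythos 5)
The paper does not prove this statement at all; it is quoted verbatim as Theorem A.1 from page 591 of Boyd--Vandenberghe's textbook, so there is no ``paper's own proof'' to compare against. The appendix opens by saying everything in it is taken from \cite{boyd:convexOptimization2004} and is included only as a reference for Section~\ref{sec: function interpolation problem}. Reproducing the proof is a reasonable exercise, but your argument has a real gap at its central step.

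Your plan to multiply (outer barrier iterations) by (Newton steps per centering) and bound the outer count by $O(\sqrt{m}\log(m/(t^{(0)}\epsilon)))$ is fine. The problem is the per-centering bound. You argue that the warm start $x^\star(t/\mu)$ is already in the quadratically convergent phase by estimating the Newton decrement $\lambda_{\psi_t}(x^\star(t/\mu))$. Working out your bound correctly (using $\nabla\phi(x^\star(t/\mu)) = -(t/\mu)\nabla h_0(x^\star(t/\mu))$, the barrier bound $\lambda_\phi^2\leq m$, and $\nabla^2\psi_t \succeq \nabla^2\phi$) gives $\lambda_{\psi_t}(x^\star(t/\mu)) \leq (\mu-1)\sqrt{m} = 1$. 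But the threshold for the quadratically convergent phase in the Boyd--Vandenberghe analysis is $\eta = (1-2\alpha)/4 \leq 1/4$, so a Newton decrement on the order of $1$ does \emph{not} place the warm start in that phase; with $\mu = 1 + 1/\sqrt{m}$ one cannot conclude ``$O(1)$ Newton steps'' this way. The argument actually used (and the one that produces the precise constant in the theorem) instead bounds the \emph{function-value} gap at the warm start: $\psi_{\mu t}(x^\star(t)) - \min\psi_{\mu t} \leq m(\mu - 1 - \log\mu) \leq m\cdot\frac{(\mu-1)^2}{2} = \tfrac12$ for $\mu = 1 + 1/\sqrt{m}$. Feeding this into the self-concordant Newton bound --- at most $(f(x_0) - p^\star)/\gamma + \log_2\log_2(1/\epsilon_{\mathrm{nt}})$ iterations, with $\gamma = \alpha\beta(1-2\alpha)^2/(20 - 8\alpha)$ --- gives exactly the constant $\frac{1/2}{\gamma} + \log_2\log_2(1/\epsilon) = \frac{10 - 4\alpha}{\alpha\beta(1 - 2\alpha)^2} + \log_2\log_2(1/\epsilon) = C$ in the statement. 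Your Newton-decrement approach is in the spirit of short-step path-following analyses, but for it to close you would need a strictly smaller update, roughly $\mu = 1 + c/\sqrt{m}$ with $c$ small enough to force $(\mu-1)\sqrt{m} < (1-2\alpha)/4$, and you would then not recover the stated constant $C$. Also note that the damped phase is not ``$O(\log(1/\epsilon_{\mathrm{center}}))$ steps''; its length is controlled by the initial suboptimality gap, not by the target accuracy.
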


\begin{remark}
In particular, Theorem \ref{thm: newton step bound} applies to
quadratically constrained quadratic programs (QCQPs).
\end{remark}

\begin{remark}
The barrier method requires a starting point $x$ that is strictly
feasible. In general this requires solving for $x$, but we are only
interested in convex problems of the form 
\begin{align*}
\text{minimize} &\quad s \\
\text{subject to} &\quad h_i(x) \leq s, \quad i=1, \ldots, m.
\end{align*}
For this type of problem, finding a strictly feasible starting point
is simple. Indeed, $(x,s) \in \R^n \times \R$ is strictly feasible so
long as $s > \max_{i=1,\ldots,m} h_i(x)$. 
\end{remark}

The cost per iteration of Newton's method is the cost of computing the
Newton step plus the cost of the line search. Usually the cost of the
Newton step dominates. To compute the Newton step of a general
unconstrained convex optimization problem \eqref{eqn: unconstrained
  convex opt} requires solving the following system of equations:
\begin{equation} \label{eqn: newton equation}
H \Delta x = -g,
\end{equation}
where $H = \nabla^2h(x)$ and $g = \nabla h(x)$. Since $H$ is symmetric
and positive definite, one can use the Cholesky factorization of
$H$. This decomposes $H$ as $H = LL^T$, where $L$ is lower
triangular. One then solves $Lw = -g$ by forward substitution and $L^T
\Delta x = w$ by back substitution. If $H$ is a dense matrix with no
additional structure, then the total cost of computing the Newton step
is 
\begin{equation} \label{eqn: cost per Newton step}
D + \frac{1}{3}n^3 + 2n^2,
\end{equation}
where $D$ is the amount of work needed to compute $H$ and $g$,
$(1/3)n^3$ is the amount of work for the Cholesky factorization, and
$2n^2$ is the amount of work for the both the forward and back
substitution. 

If $H$ is sparse, sparse Cholesky factorization can be used in which
$H = P L L^T P^T$, where $P$ is a permutation matrix. The cost of the
factorization depends on the sparsity pattern, but can be much lower
(e.g., $O(n^{3/2})$). The cost is heavily dependent on the choice
of $P$; algorithms for finding good permutation matrices are known as
symbolic factorization methods. Sparsity can also be used to speed up
the forward and back substitutions. 

In the case of the barrier method for solving constrained
optimization problems, $h = th_0 + \phi$. In this case,
\begin{align}
H &= t \nabla^2 h_0(x) + \sum_{i=1}^m \frac{1}{h_i(x)^2} \nabla h_i(x)
\nabla h_i(x)^T - \sum_{i=1}^m \frac{1}{h_i(x)} \nabla^2
h_i(x), \label{eqn: H for barrier}\\
g &= t \nabla h_0(x) - \sum_{i=1}^m \frac{1}{h_i(x)} \nabla h_i(x). \nonumber
\end{align}
The worst case complexity of the Cholesky factorization and the forward and
back substitution remain the same, but we can further analyze the cost
of forming $H$ and $g$. Let $D'$ be the amount of work needed to
compute $\nabla h_i(x)$ and $\nabla^2 h_i(x)$ for all $i = 0, \ldots,
m$. Then
\begin{equation} \label{eqn: Newton step H cost for barrier}
D = D' + O(mn^2),
\end{equation}
where the $O(mn^2)$ term results from summing all of the terms in
\eqref{eqn: H for barrier}. 

\end{appendices}

\bibliography{/Users/matthewhirn/Dropbox/Mathematics/Bibliography/MainBib}

\projects{\noindent A.H.V. and F.M. were participants in the 2013
  Research Experience for Undergraduates (REU) at Cornell University
  under the supervision of M.H. During the REU program all three were
  supported by the National Science Foundation (NSF) grant number
  NSF-1156350. This paper is the result of work started during the
  REU. Additionally, all three attended the ``$8^{\text{th}}$ Whitney
  Problems Workshop,'' supported by and hosted at the Centre
  International de Rencontres Math\'{e}matiques (CIRM), during which
  time the paper was revised. During the writing of the manuscript
  M.H. was supported by European Research Council (ERC) grant
  InvariantClass 320959. He is currently partially supported by an Alfred
  P. Sloan Research Fellowship, a DARPA Young Faculty Award, and NSF
  grant \#1620216. F.M. is supported by a National Science Foundation
  Graduate Research Fellowship.}

\end{document}